\newtheorem{theo}{Theorem}[section]
\newtheorem{prop}[theo]{Proposition}
\newtheorem{lemm}[theo]{Lemma}
\newtheorem{coro}[theo]{Corollary}
\newtheorem{rema}[theo]{Remark}
\newtheorem{Defi}[theo]{Definition}
\newtheorem{claim}[theo]{Claim}
\newtheorem{conj}[theo]{Conjecture}
\title{Some new   results on  modified diagonals}
\author{Claire Voisin\footnote{This research has been   supported by  The Charles Simonyi Fund and The Fernholz Foundation.}
\\CNRS, Institut de Math\'ematiques de Jussieu and IAS}
\date{}
\newfont{\gothic}{eufb10}
\begin{document}
\maketitle

\begin{abstract} In the paper \cite{ogrady}, O'Grady studied
$m$-th modified diagonals for a smooth connected projective variety,
generalizing the Gross-Schoen modified small diagonal
\cite{grossschoen}. These cycles $\Gamma^m(X,a)$ depend on a choice
of reference point $a\in X$ (or more generally a degree $1$
zero-cycle). We prove that for any $X,a$, the cycle $\Gamma^m(X,a)$
vanishes for large $m$. We also prove  the following conjecture  of
O'Grady: if $X$ is a double  cover of $Y$ and $\Gamma^m(Y,a)$
vanishes (where $a$ belongs to the branch locus), then
$\Gamma^{2m-1}(X,a)$ vanishes, and we provide a   generalization to
higher degree finite covers.
 We finally prove the vanishing
$\Gamma^{n+1}(X,o_X)=0$ when $X=S^{[m]}$, $S$ a $K3$ surface, and $n=2m$, which was
conjectured by O'Grady and proved by him for $m=2,3$.
 \end{abstract}

\section{Introduction}
Let $X$ be a connected smooth projective  variety of dimension $n$.
We will denote in this paper ${\rm CH}_i(X)$ the Chow groups of $X$
with rational coefficients and ${\rm CH}_i(X)/{\rm alg}$  the groups
of $i$-cycles of $X$ with $\mathbb{Q}$-coefficients modulo algebraic
equivalence.
 Let
$a\in{\rm CH}_0(X)$ be a $0$-cycle of degree $1$ on $X$. Following
Gross-Schoen \cite{grossschoen} and O'Grady \cite{ogrady}, let us
consider for $m\geq2$ the following $n$-cycle $\Gamma^m(X,a)$ in
$X^m$, which is a modification of the $m$-th small diagonal of $X$:
\begin{eqnarray} \label{eqformgammam} \Gamma^m(X,a)=\sum_{I\subset
\{1,\ldots,m\},|I|=i<m}(-1)^ip_I^*(a^{*i})\cdot p_J^*\Delta_{m-i}\in {\rm
CH}_n(X^m)_\mathbb{Q}, \end{eqnarray}
 where
 \begin{itemize}
\item $\{1,\ldots,m\}$ is the disjoint union of $I$ and $J$,
\item $p_I:X^m\rightarrow X^i$, resp. $p_J:X^m\rightarrow X^{m-i}$ are the
projections onto the products of factors indexed by $I$, resp. $J$,
\item  $\Delta_{m-i}$ is the small diagonal of $X^{m-i}$, $\Delta_1=X$,
\item $a^{*i}\in {\rm
CH}_0(X^i)$ is defined by
\begin{eqnarray} \label{eqstar} a^{*i}=p_1^*a\cdot\ldots \cdot p_i^*a.
\end{eqnarray}
\end{itemize}
For example, for $m=2$, we have $\Gamma^2(X,a)=\Delta_X-a\times
X-X\times a$ and $\Gamma^2(X,a)=0$ if and only if $X=\mathbb{P}^1$
or a point. The modified small diagonal $\Gamma^3(X,a)$ appears in
several recent works. Gross and Schoen prove that $\Gamma^3(X,a)=0$
if $X$ is a hyperelliptic curve and $a$ is a Weierstrass point. This
result was greatly extended    in \cite{colombovgeemen}
 by Colombo and
van Geemen, who  worked with
$1$-cycles modulo algebraic equivalence and proved that, for a
$d$-gonal curve $X$, the  cycle $\Gamma^{d+1}
(X,a)$
 is
algebraically equivalent to $0$. Although they do not state their
result in this form, but as the vanishing modulo algebraic
equivalence of the components $Z_s,\,s\geq d-1$ of the Beauville
decomposition (see \cite{beau}) of $X$ in its Jacobian, one can show
that this is equivalent to the vanishing of $\Gamma^{d+1}(X,a)$
modulo algebraic equivalence. For completeness, we will prove this
fact in subsection \ref{subsec}.

Concerning higher dimensional varieties,
 Beauville and the author proved in \cite{beauvoi} the following theorem:
\begin{theo}\label{BV} Let  $X$ be a $K3$ surface. Then there exists a
canonical degree $1$
zero-cycle $o_X$ of $X$ such that
\begin{eqnarray}
\Gamma^3(X,o_X)=0 \,\,{\rm in}\,\,{\rm CH}_2(X^3).
\label{eqformk3}
\end{eqnarray} In fact,
   $o_X$  can be defined as the class in ${\rm CH}_0(X)$ of any point of $X$ lying on a (singular)
   rational curve in $X$.
\end{theo}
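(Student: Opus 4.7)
The plan is to first establish the three Beauville--Voisin properties of the class $o_X$ and then perform the explicit expansion of $\Gamma^{3}(X, o_X)$. For the well-definedness of $o_X$, I would use that any two (possibly singular) rational curves on the projective K3 surface $X$ meet, because their intersection number is positive, while all points of a single rational curve are rationally equivalent since its normalization is $\mathbb{P}^{1}$; hence every point on every rational curve represents the same class $o_X \in \mathrm{CH}_{0}(X)$. Next I would prove the two intersection-theoretic facts: (ii) the image of $\mathrm{Pic}(X) \otimes \mathrm{Pic}(X) \to \mathrm{CH}_{0}(X)$ is contained in $\mathbb{Z}\, o_X$, by a moving argument locating the intersection of two divisors on a rational curve, and (iii) $c_{2}(T_X) = 24\, o_X$, which follows from the existence of rational curves together with $\chi_{\mathrm{top}}(X) = 24$.

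Writing $o_{i} \in \mathrm{CH}_{4}(X^{3})$ for $\{o_X\}$ inserted in the $i$-th factor and $\Delta_{ij}$ for the partial diagonal identifying the $i$-th and $j$-th factors, the definition \eqref{eqformgammam} gives
\[
\Gamma^{3}(X, o_X) = \Delta_{123} - o_{1} \Delta_{23} - o_{2} \Delta_{13} - o_{3} \Delta_{12} + o_{1} o_{2} + o_{1} o_{3} + o_{2} o_{3}.
\]
Set $\delta := \Delta_{X} - \{o_X\} \times X - X \times \{o_X\} \in \mathrm{CH}^{2}(X^{2})$ and $\eta_{ij} := p_{ij}^{*} \delta = \Delta_{ij} - o_{i} - o_{j}$. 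Using $o_{i}^{2} = 0$ (trivial top Chern class of the normal bundle of $o_{i}$ in $X^{3}$), the partial-diagonal relation $o_{i} \cdot \Delta_{ij} = o_{i} o_{j}$, and the transverse intersection $\Delta_{12} \cdot \Delta_{13} = \Delta_{123}$, a direct expansion produces the identity
\[
\Gamma^{3}(X, o_X) = \eta_{12} \cdot \eta_{13} - o_{1} \cdot \eta_{23}
\]
in $\mathrm{CH}_{2}(X^{3})$, reducing the theorem to the verification of $\eta_{12} \eta_{13} = o_{1} \eta_{23}$.

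At the level of correspondences $\delta$ acts on $\mathrm{CH}_{0}(X)$ as the projector $\alpha \mapsto \alpha - \deg(\alpha)\, o_X$, since $\Delta_{*}\alpha = \alpha$, $(\{o_X\} \times X)_{*}\alpha = 0$ and $(X \times \{o_X\})_{*}\alpha = \deg(\alpha)\, o_X$. Using (ii) and (iii), one checks that $\delta \circ \delta = \delta$ in $\mathrm{CH}^{2}(X^{2})$ by computing the pushforward of $\eta_{12} \eta_{23}$ under $p_{13}$; this already yields $p_{13,*}\bigl(\Gamma^{3}(X, o_X)\bigr) = 0$. Upgrading this to the vanishing $\Gamma^{3}(X, o_X) = 0$ in $\mathrm{CH}_{2}(X^{3})$ is the main obstacle: the kernel of $p_{13,*}$ is large, so one must show that the expansions of $\eta_{12}\eta_{13}$ and $o_{1}\eta_{23}$ agree term by term, using (ii) to control products of divisor pullbacks and the self-intersection $\Delta_{12}^{2} = 24\, o_{1} o_{2}$, which follows from $N_{\Delta_{12}/X^{3}} \cong p_{1}^{*} T_{X}$ and (iii). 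This final bookkeeping is where the Beauville--Voisin subring structure on $\mathrm{CH}^{*}(X)$ is essential.
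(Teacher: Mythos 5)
First, a remark on scope: the paper does not prove Theorem \ref{BV} at all; it is quoted as background from \cite{beauvoi}, so your proposal has to be measured against the Beauville--Voisin proof itself. Your formal reduction is fine: the identity $\Gamma^3(X,o_X)=\eta_{12}\cdot\eta_{13}-o_1\cdot\eta_{23}$ is correct (it is exactly formula (\ref{eqformrec}) of Lemma \ref{legammamgamma1} for $m=2$, using $\Gamma^2(X,a)=\delta$), and it holds for \emph{any} smooth projective variety and any point $a$. But that is precisely the problem with the rest of your argument. The observation $p_{13*}\Gamma^3(X,o_X)=0$ is vacuous: since ${pr_1}_*\delta={pr_2}_*\delta=0$, the projection formula kills both $\eta_{12}\eta_{13}$ and $o_1\eta_{23}$ under $p_{13*}$ for every surface, and likewise $\delta\circ\delta=\delta$ is a formal identity valid for any surface and any degree $1$ point; neither uses the K3 hypothesis. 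The genuine content of the theorem is the remaining identity $\eta_{12}\eta_{13}=o_1\eta_{23}$, equivalently the expression of the small diagonal $\Delta_{123}=\Delta_{12}\cdot\Delta_{13}$ in terms of $\Delta_{ij}\cdot o_k$ and $o_i\cdot o_j$, and this cannot be obtained by ``term by term bookkeeping'' from the relations you list: the inputs $D\cdot D'\in\mathbb{Z}o_X$, $c_2(X)=24\,o_X$ and $\Delta_{12}^2=24\,o_1o_2$ all live on $X$ or $X^2$ and generate no relation involving the class $\Delta_{123}\in{\rm CH}_2(X^3)$. The actual proof in \cite{beauvoi} requires a new geometric input at exactly this point: one sweeps out $X$ by the curves of an ample linear system $|H|$ (whose members have bounded genus and include singular rational/elliptic curves by Bogomolov--Mumford), works on the universal curve over $|H|$ and its fibered self-products, and uses relations in the Chow groups of the individual curves together with a spreading/decomposition argument to produce the relation on $X^3$. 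Your sketch names this step ``the main obstacle'' and then does not supply any mechanism for it, so the theorem is not proved.

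There is also a smaller but genuine error in your well-definedness argument for $o_X$: it is false that any two rational curves on a projective K3 surface meet because ``their intersection number is positive'' --- the sixteen exceptional $(-2)$-curves on a Kummer surface are pairwise disjoint irreducible rational curves. The correct argument (as in \cite{beauvoi}) is that every ample linear system on $X$ contains singular rational members sweeping out $X$, and every curve on $X$ meets such an ample rational curve; one then connects any two rational curves by a chain through an ample one. Similarly, items (ii) and (iii) of your first paragraph are the main theorem of \cite{beauvoi} and need the nontrivial facts that every divisor class on a K3 is rationally equivalent to a combination of rational curves and that $c_2$ can be represented by a cycle supported on a fixed rational curve; asserting them via ``a moving argument'' leaves the substance unaddressed. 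In short: the outline correctly isolates where the difficulty lies, but the decisive step --- the one that distinguishes a K3 surface from an arbitrary regular surface --- is missing.
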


In the paper \cite{ogrady}, O'Grady investigates $\Gamma^m(X,a)$ for
higher $m$. He proves the following results (for $X$ smooth
projective connected):
\begin{theo}\label{theoog} (O'Grady \cite{ogrady}) (i) The cycle $\Gamma^{n+1}(X,a)$ is
cohomologous to $0$, for $n={\rm dim}\,X$ and $q(X)=0$. More
generally $\Gamma^{m+1}(X,a)$ is cohomologous to $0$ if and only if
$m\geq {\rm dim}\,X+d$, where $d$ is the dimension of the image of
$X$ in its Albanese variety.

(ii) If $\Gamma^m(X,a)=0$ then $\Gamma^{m'}(X,a)=0$ for $m'\geq m$.

(iii)  If $p:X\rightarrow Y$ is a ramified double cover and $a$ is a
branch point such that  $\Gamma^m(Y, a)=0$, then  for $m=2$ or
$m=3$, $\Gamma^{2m-1}(X,b)=0$, where $p(b)=a$.
\end{theo}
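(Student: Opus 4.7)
\medskip\noindent\emph{Proof plan.}  I would handle the three parts separately, via quite different techniques.

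For (i), the approach is a direct cohomological computation via the K\"unneth formula.  Fix a homogeneous basis $(\alpha_k)$ of $H^*(X;\mathbb{Q})$ with Poincar\'e-dual basis $(\beta_k)$ and iterate $[\Delta_X]=\sum_k\alpha_k\otimes\beta_k$ to expand $[\Delta_m]\in H^{2n(m-1)}(X^m)$.  The corrections $p_I^*(a^{*i})\,p_J^*[\Delta_{m-i}]$, summed over proper subsets $I$, are designed to cancel exactly those K\"unneth pieces of $[\Delta_m]$ that involve $1\in H^0(X)$ or $[a]\in H^{2n}(X)$ in some position.  Hence $[\Gamma^m(X,a)]$ lies in the reduced K\"unneth subspace $(H^{>0,<2n}(X))^{\otimes m}$.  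A nonzero contribution in degree $2n(m-1)$ requires a tuple $(k_1,\ldots,k_m)$ with $k_j\in\{1,\ldots,2n-1\}$ and $\sum k_j=2n(m-1)$.  The contribution of $H^1(X)$, of rank $2d$, allows $k_j\in\{1,2n-1\}$ only when $d\geq 1$, and a careful analysis of this (together with Poincar\'e duality within $H^*(X)$) yields the threshold $m\leq n+d$, giving (i).

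For (ii), the plan is to establish a recursive formula $\Gamma^{m+1}(X,a)=F(\Gamma^m(X,a))$, where $F$ is a linear operator on Chow groups.  Splitting the defining sum of $\Gamma^{m+1}$ according to the location of the index $m+1$ produces three groups of contributions.  Those with $m+1\in I$ collapse to $-u_{m+1}\cdot p^*\Gamma^m(X,a)$, where $p\colon X^{m+1}\to X^m$ drops the last factor and $u_i:=p_i^*(a)$.  Those with $m+1\in J$ and $|J|\geq 2$ are, via the projection formula, pushforwards of terms of $\Gamma^m$ under the diagonal embeddings $\iota_j\colon X^m\hookrightarrow X^{m+1}$ sending $(x_1,\ldots,x_m)$ to $(x_1,\ldots,x_m,x_j)$.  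Finally, a singleton term $(-1)^m u_1\cdots u_m$ from $J=\{m+1\}$ must be absorbed into these pushforwards; once such an absorption identity is established, the statement follows by induction.  The main difficulty here is the absorption step, which reflects the fact that $\Gamma^m(X,a)$ already contains compensating top-dimensional terms.

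For (iii), I would use the push-pull formalism of the double cover $p\colon X\to Y$ with covering involution $\sigma$, the identities $p_*p^*=2$ and $p^*p_*=1+\sigma^*$, and the ramification formula $p^*[a]=2[b]$ (valid since $a$ is a branch point).  The core identity is
\begin{equation*}
(p^m)^*[\Delta_{Y,m}]=\sum_{\epsilon\in\{1,\sigma\}^m/\langle\sigma\rangle}[\Delta_X^\epsilon],\qquad \Delta_X^\epsilon:=\{(\epsilon_1 x,\ldots,\epsilon_m x):x\in X\},
\end{equation*}
together with $(p^m)^*u_i^Y=2u_i^X$.  Pulling back the vanishing $\Gamma^m(Y,a)=0$ yields a relation in $\mathrm{CH}_n(X^m)$ among the $2^{m-1}$ $\sigma$-twisted partial diagonals and products of the $u_i^X$.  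To upgrade this to $\Gamma^{2m-1}(X,b)=0$, I would introduce a map $q\colon X^{2m-1}\to X^m$ pairing $m-1$ blocks of two consecutive factors into the fiber product $X\times_Y X=\Delta_X\cup\Gamma_\sigma$ and leaving one distinguished factor; the $2^{m-1}$ sign patterns $\epsilon$ then correspond to independent binary choices between $\Delta_X$ and $\Gamma_\sigma$ within each pair.  Expansion of $\Gamma^{2m-1}(X,b)$ is matched term-by-term against $q^*(p^m)^*\Gamma^m(Y,a)$ modulo cycles supported on the ramification locus (where $\sigma(b)=b$ controls residues).  The hardest step is this combinatorial matching, whose intricacy for large $m$ is likely the reason why the original O'Grady statement restricts to $m\in\{2,3\}$.
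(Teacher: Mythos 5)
First, a point of reference: the paper does not prove Theorem \ref{theoog} at all --- it is quoted from O'Grady's paper \cite{ogrady} --- so there is no internal proof to compare against. Judged on its own terms, your plan for (i) is essentially the standard (and correct) argument: $[\Gamma^m(X,a)]$ is the K\"unneth component of $[\Delta_m]$ in $\bigl(H^{>0,<2n}(X)\bigr)^{\otimes m}$, and a nonzero component in total degree $2n(m-1)$ forces, on the Poincar\'e-dual side, $m$ classes of degree in $[1,2n-1]$ with nonzero product in $H^{2n}(X)$, whence $m\leq n+d$. One correction: $H^1(X)$ has rank $2q$, not $2d$; the role of $d$ is that cup products of more than $2d$ classes from $H^1(X)$ vanish because they are pulled back from the $d$-dimensional Albanese image. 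That is the fact you need to close the degree count, and your sketch does not isolate it.

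For (ii) and (iii) the gaps are genuine and sit exactly where you flag them. In (ii), your recursion is the paper's formula (\ref{eqformrec}), $\Gamma^{m+1}(X,a)=\Gamma^{1,m}(X,a)-p_0^*a\cdot p_{1,\ldots,m}^*\Gamma^m(X,a)$, so the whole content of the statement is the implication $\Gamma^m(X,a)=0\Rightarrow\Gamma^{1,m}(X,a)=0$ (the hard direction of Proposition \ref{propeq}, which the paper itself derives \emph{from} O'Grady's Proposition 2.4); your ``absorption identity'' is precisely this implication and is not established by anything you wrote. A workable route when $a$ satisfies $p_1^*a\cdot\Delta_X=p_1^*a\cdot p_2^*a$ (e.g.\ $a$ a point) is: $\Gamma^m=0$ gives $\Gamma^{1,m-1}=p_0^*a\cdot p^*\Gamma^{m-1}$, hence $\Gamma^{1,m}=p^*\Gamma^{1,m-1}\cdot(p_{0m}^*\Delta_X-p_m^*a)$ acquires a factor $p_0^*a\cdot(p_{0m}^*\Delta_X-p_m^*a)=0$; for a general degree-one cycle $a$ you would need a different argument. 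In (iii), the ``combinatorial matching'' you defer is the entire proof, and the paper's Section \ref{sec2bis} (which proves the statement for all $m$, not just $m=2,3$) shows the two ingredients your plan lacks: the exchange identity $p_{12}^*\Delta_X^-\cdot p_{13}^*\Delta_X^-=p_{12}^*(\pi_2^*\Delta_Y^a)\cdot p_{23}^*\Delta_X^-$, which converts each \emph{pair} of anti-invariant diagonal factors into a pullback of $\Delta_Y^a$ so that the hypothesis $\Gamma^m(Y,a)=0$ can be applied, and the final observation that the one surviving monomial $\prod_i p_{0i}^*\Delta_X^-$ is anti-invariant under the covering involution acting on the $0$-th factor and therefore dies after the pushforward $p_{1,\ldots,2m-1*}$. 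Without these two steps your term-by-term comparison of $\Gamma^{2m-1}(X,b)$ with $q^*(p^m)^*\Gamma^m(Y,a)$ does not close: the twisted diagonals $\Delta_X^\epsilon$ you produce are not in bijection with the monomials of $\Gamma^{2m-1}(X,b)$, and the parity/involution argument is what handles the discrepancy.
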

He conjectures that (iii) holds for any $m$ (see \cite[Conjecture
5.1]{ogrady}). One of our results is the proof of O'Grady's
conjecture, see (i) below, and a generalization  to any degree, see
(ii) and (iii) below.
\begin{theo}\label{theointro2} Let $p:X\rightarrow Y$ be a degree $d$
finite morphism, where $X,\,Y$ are smooth projective and connected.

(i) Assume  $d=2$,  $a\in{\rm CH}_0(Y)$ is a $0$-cycle of degree $1$
supported on the branch locus of $p$, and $b:=\frac{1}{2}p^*a\in
{\rm CH}_0(X)$; if
 $\Gamma^m(Y,a)=0$, then
 $\Gamma^{2m-1}(X,b)=0$.

(ii)  For any $d$, assume $a\in Y$ is a point such that the subscheme $p^{-1}(a)$
 is supported on a point
$b\in X$.  If
 $\Gamma^m(Y,a)=0$, then
 $\Gamma^{d(m-1)+1}(X,b)=0$.

 (iii) For any $d$,  let  $b:=\frac{1}{d}p^*a$ for some
  $0$-cycle  $a\in{\rm CH}_0(Y)$ of degree $1$. If
  $\Gamma^m(Y,a)=0$ in ${\rm CH}_n(Y^m)/{\rm alg}$, then
  $\Gamma^{d(m-1)+1}(X,b)=0$ in ${\rm CH}_n(X^{d(m-1)+1})/{\rm alg}$.
\end{theo}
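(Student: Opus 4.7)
The plan is to establish a cycle-theoretic ``transfer'' identity converting the vanishing of $\Gamma^m(Y,a)\in{\rm CH}_n(Y^m)$ into that of $\Gamma^{d(m-1)+1}(X,b)\in{\rm CH}_n(X^{d(m-1)+1})$. The gap $N-m=(d-1)(m-1)$ between the two exponents reflects the ``unfolding'' of each $Y$-coordinate into $d$ $X$-coordinates via the fiber product $X\times_YX$.

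The starting point is the basic pullback computation. Using $p^*a=d\cdot b$ and the functoriality of pullback, one obtains
\begin{equation*}
(p^m)^*\Gamma^m(Y,a) = \sum_{I\subsetneq\{1,\ldots,m\}}(-1)^{|I|}d^{|I|}\,p_I^*(b^{*|I|})\cdot p_{I^c}^*W_{m-|I|}\quad\text{in }{\rm CH}_n(X^m),
\end{equation*}
where $W_k:=(p^k)^*\Delta_{k,Y}=X\times_Y\cdots\times_YX$ is the $k$-fold fiber product, an $n$-cycle in $X^k$. It decomposes as $W_k=\Delta_{k,X}+R_k$, with $R_k$ collecting the non-diagonal (``twisted'') components. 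The hypothesis $\Gamma^m(Y,a)=0$ makes the sum above vanish, yielding a relation in ${\rm CH}_n(X^m)$; however, this relation lives in $X^m$ rather than $X^N$, so more is needed.

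To bridge the gap, I would partition the $N=d(m-1)+1$ coordinates of $X^N$ into $m-1$ blocks of size $d$ and one singleton, and consider the subvariety $V\subset X^N$ where all coordinates within each block share the same $p$-image. Then $V$ has dimension $nm$ and carries a canonical finite map $q:V\to Y^m$; pushing $q^*\Gamma^m(Y,a)$ forward via the inclusion $V\hookrightarrow X^N$ yields an $n$-cycle in $X^N$ which vanishes under the hypothesis. Expanding using formula (\ref{eqformgammam}) and the decomposition $W_d=\Delta_{d,X}+R_d$ in each size-$d$ block, the ``main term'' (selecting $\Delta_{d,X}$ in every block) should coincide with a positive multiple of $\Gamma^N(X,b)$, while the ``residual'' terms are combinations of twisted diagonals in $X^N$.

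Controlling the residual terms is the core difficulty and diverges by case. For (iii), working modulo algebraic equivalence, one would reduce to the Galois closure and exploit that the irreducible components of $W_k$ become algebraically equivalent under suitable deformations of the cover, so the residual terms collapse into multiples of the main term. For (i), the hypothesis $\sigma(b)=b$ allows a symmetrization under $\sigma^{\otimes N}$ to isolate $\Gamma^{2m-1}(X,b)$ from the $\sigma$-anti-invariant residuals. For (ii), total ramification $p^{-1}(a)=d\cdot b$ forces the twisted components of $W_k$ to collapse onto the diagonal near the base point, permitting a direct identification. The main technical obstacle is the combinatorial verification that the inclusion-exclusion over $I$ and the block decomposition interact cleanly enough to isolate $\Gamma^N(X,b)$ with the correct positive coefficient; tracking the contributions of each twisted component under $\iota_*q^*$ is where the specific geometric data of $p$ at $b$ enters, and I expect case (iii) to be the cleanest (thanks to the flexibility of algebraic equivalence), with (i) and (ii) requiring more delicate bookkeeping involving the involution and the local structure at the totally ramified point, respectively.
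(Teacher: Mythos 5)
Your general shape (blocks of size $d$, fiber products of $X$ over $Y$) points in the right direction, but the central mechanism is missing, and the devices you propose for controlling the residual terms would not work. The paper does not push $\Gamma^m(Y,a)$ into $X^N$ along a subvariety $V$; instead it first replaces $\Gamma^k$ by the multiplicative cycle $\Gamma^{1,k}(X,b)=\prod_{i=1}^k p_{0i}^*\Delta_X^b$ with $\Delta_X^b=\Delta_X-p_2^*b$ (Propositions \ref{propeq} and \ref{propourdm1}); it is this product structure, not the inclusion--exclusion formula (\ref{eqformgammam}), that makes a block decomposition of the $d(m-1)$ indices possible. The key identity is then Proposition \ref{legroupd}: the product of $d$ factors $p_{0i}^*\Delta_X^b$ sharing the index $0$ lies in the ideal generated by the $p_{0i}^*(\pi_2^*\Delta_Y^a)$. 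This rests on the emptiness of $\Sigma_d$, the closure of the locus of $(x,x_1,\ldots,x_d)$ with $\pi(x_j)=\pi(x)$ and all $d+1$ points pairwise distinct -- empty precisely because the fibers have only $d$ points. Your $W_k=(p^k)^*\Delta_{k,Y}$ is the full fiber product, which is never empty, and the decomposition $W_k=\Delta_{k,X}+R_k$ by itself yields no vanishing; without the inductive computation $\alpha_k\Gamma^{1,k}(X,b)=\Sigma_k^b$ modulo that ideal (Lemma \ref{le20avril}), there is no way to convert each block into a factor of $p_{0i}^*(\pi_2^*\Delta_Y^a)$ and hence no way to invoke the hypothesis on $Y$.

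Your case-by-case mechanisms also misplace where the hypotheses enter. In (iii) the components of $X\times_YX$ are not in general algebraically equivalent to one another, and no deformation of the cover is used; algebraic equivalence is needed only so that the degree-one cycle $b=\frac1d p^*a$ satisfies $b*b=\Delta_*b$ (condition (\ref{eqimportant})), which is exactly what Proposition \ref{propourdm1} requires to pass back from $\Gamma^{1,d(m-1)}(X,b)=p_0^*b\cdot\Gamma_X$ to $\Gamma^{d(m-1)+1}(X,b)=0$. In (ii) nothing ``collapses near the base point''; the total ramification serves only to make $b$ the class of an actual point, for which $b*b=\Delta_*b$ holds in ${\rm CH}$. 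For (i) your symmetrization instinct is closest to the paper: there the surviving monomial $\prod_j p_{0j}^*\Delta_X^-$ is killed because it is skew-invariant under $(i,{\rm Id},\ldots,{\rm Id})$ while $p_{1,\ldots,2m-1}$ is invariant. But the exponent $2m-1$ is achieved only via the identity $p_{12}^*\Delta_X^-\cdot p_{13}^*\Delta_X^-=p_{12}^*\Delta_X^+\cdot p_{23}^*\Delta_X^-$, which converts each \emph{pair} of $\Delta_X^-$ factors into one factor pulled back from $Y$; without it your residual terms with $1\le r\le m-1$ pulled-back factors do not vanish and the count fails.
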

Statement (i) of Theorem \ref{theointro2}  has been obtained
independently by Moonen and Yin \cite{moonenyin}.
\begin{rema}\label{remaref}{\rm When $Y=\mathbb{P}^n$, and $d\leq n+1$,
there always exists a point $a\in \mathbb{P}^n$ as in (ii) (cf.
\cite{fula}). In this case, we have $\Gamma^m(Y,a)=0$, with $m=n+1$,
hence we conclude that for $d$-th covers $X$ of $\mathbb{P}^n$ with
$d\leq n+1$, $\Gamma^{dn+1}(X,b)=0$, with $b=\frac{1}{d}p^*(pt)$.
Note also that any curve $X$ of genus $g$ admits a morphism of
degree $d\leq g+1$ to $\mathbb{P}^1$, which is totally ramified  at
one given  point $x$. Hence we get $\Gamma^{g+2}(X,x)=0$ for any
$x\in X$. This last result is also proved by Moonen and Yin
\cite{moonenyin} using the Colombo-van Geemen vanishing result.}
\end{rema}

\begin{rema} \label{remanew25aout} {\rm In the case where $Y$ is $\mathbb{P}^1$, so $X$ is a $d$-gonal
curve, Theorem \ref{theointro2}, (iii) gives the vanishing
$\Gamma^{d+1}(X, b)=0$ in ${\rm CH}_1(X^{d+1})/{\rm alg}$. As explained
 in Subsection \ref{subsec}, this is equivalent to the
Colombo-van Geemen theorem \cite{colombovgeemen} mentioned above.}
\end{rema}

Another application of Theorem \ref{theointro2} is the following
result, which will be deduced from it in Section \ref{sec1} using
 the smash nilpotence result
of \cite{voe} for cycles algebraically equivalent to $0$:
\begin{coro}\label{corosec1} Let $X$ be a smooth projective(connected)  variety of dimension $n$. Then for any
$a\in  {\rm CH}_0( X)$ of degree
$1$, there exists an integer $m$ such that $\Gamma^m(X,a)=0$ in ${\rm CH}^{(m-1)n}(X^m)$.
\end{coro}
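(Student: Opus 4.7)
The plan is to combine Theorem~\ref{theointro2}(iii) with Voevodsky's smash nilpotence theorem for cycles algebraically equivalent to zero. The key idea is to apply Theorem~\ref{theointro2}(iii) to a Noether normalization $X\to\mathbb{P}^{n}$ to obtain an algebraically trivial modified diagonal on $X$, then use smash nilpotence to upgrade this to an actual vanishing in the Chow group.

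First I would choose by Noether normalization a finite surjective morphism $\pi\colon X\to\mathbb{P}^{n}$ of some degree $d$, and set $b:=\tfrac{1}{d}\pi^{*}(o)\in\mathrm{CH}_{0}(X)$ for a point $o\in\mathbb{P}^{n}$. Since the cycle class map is injective on products of projective spaces, Theorem~\ref{theoog}(i) (applied with $q(\mathbb{P}^{n})=0$) gives $\Gamma^{n+1}(\mathbb{P}^{n},o)=0$ in $\mathrm{CH}_{n}((\mathbb{P}^{n})^{n+1})$. Theorem~\ref{theointro2}(iii) then yields $\Gamma^{dn+1}(X,b)=0$ in $\mathrm{CH}_{n}(X^{dn+1})/\mathrm{alg}$.

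Since this cycle is algebraically trivial on the smooth projective variety $X^{dn+1}$, Voevodsky's theorem provides an integer $N\geq 1$ such that $\bigl(\Gamma^{dn+1}(X,b)\bigr)^{\boxtimes N}=0$ in $\mathrm{CH}\bigl(X^{N(dn+1)}\bigr)$. I would then pull this back along the diagonal embedding $\iota\colon X^{Ndn+1}\hookrightarrow X^{N(dn+1)}$ which identifies, for each $k=1,\dots,N-1$, the last coordinate of the $k$-th block of $dn+1$ factors with the first coordinate of the $(k+1)$-th block. This produces the vanishing in $\mathrm{CH}(X^{M})$ (with $M:=Ndn+1$) of an intersection of $N$ overlapping pull-backs of $\Gamma^{dn+1}(X,b)$. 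Expanding each factor via~\eqref{eqformgammam} and isolating the term in which every factor contributes its small-diagonal piece, one recognizes this intersection as $\Gamma^{M}(X,b)$ up to correction terms that are products of pulled-back modified diagonals of strictly lower order, possibly intersected with factors $p_{i}^{*}b$; the recursive structure underlying the proof of Theorem~\ref{theoog}(ii) lets one control these correction terms inductively and conclude $\Gamma^{M}(X,b)=0$ in $\mathrm{CH}_{n}(X^{M})$. This combinatorial identification is the main technical obstacle.

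Finally, to replace $b$ by an arbitrary degree-$1$ zero-cycle $a\in\mathrm{CH}_{0}(X)$, I would exploit the polynomial dependence of the defining formula on its base point: expanding $\Gamma^{m}(X,a)=\Gamma^{m}(X,b)+(\text{cycles involving }a-b)$, the first term vanishes for $m$ large by the previous step, and the remaining corrections are handled either by varying the Noether normalization $\pi$ so as to match $a$ when its geometry permits, or by applying the same cover-plus-smash-nilpotence strategy to each algebraically trivial contribution arising from $a-b$, arriving at $\Gamma^{M'}(X,a)=0$ in $\mathrm{CH}_{n}(X^{M'})$ for some $M'$ depending on~$a$.
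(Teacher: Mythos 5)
Your overall strategy is the right one and matches the paper's: Noether normalization to $\mathbb{P}^n$ plus Theorem \ref{theointro2}, (iii) to get a modified diagonal vanishing modulo algebraic equivalence, then Voevodsky's smash nilpotence, then restriction to a partial diagonal, then passage from the special cycle $b$ to a general $a$. But the step you yourself flag as ``the main technical obstacle'' is a genuine gap, and it is exactly the point where the paper has a device you are missing. Working with $\Gamma^m(X,b)$ itself, the restriction of $\Gamma^{dn+1}(X,b)^{*N}$ to a chain of partial diagonals is not $\Gamma^{M}(X,b)$ plus controllable corrections in any evident way: expanding each factor via (\ref{eqformgammam}) produces a mass of cross terms with no inductive structure to kill them, and Theorem \ref{theoog}, (ii) does not supply one. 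The paper avoids this entirely by replacing $\Gamma^m(X,b)$ with the cycle $\Gamma^{1,m}(X,b)=\prod_{i=1}^m(p_{0i}^*\Delta_X-p_i^*b)$ on $X^{m+1}$, whose vanishing is equivalent to that of $\Gamma^m(X,b)$ (Proposition \ref{propeq}, which holds also modulo algebraic equivalence). Because $\Gamma^{1,m}$ is a literal product of factors each involving only the $0$-th coordinate and one other coordinate, formula (\ref{eqkk'}) shows that the smash power $\Gamma^{1,m}(X,b)^{*N}\in{\rm CH}(X^{N(m+1)})$, restricted to the partial diagonal identifying all $N$ copies of the $0$-th factor with one another (a star-shaped identification, not the chain you propose), is \emph{exactly} $\Gamma^{1,Nm}(X,b)$, with no correction terms at all. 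This is the missing idea; without it your key step does not go through.

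Your final paragraph on replacing $b$ by an arbitrary degree-$1$ cycle $a$ is also overcomplicated and, as written, not a proof: ``varying the Noether normalization so as to match $a$'' is not available for a general zero-cycle $a$, and the treatment of the contributions of $a-b$ is not spelled out. Since $X$ is connected, $a$ and $b$ are algebraically equivalent, hence $\Gamma^{1,m}(X,a)$ is algebraically equivalent to $\Gamma^{1,m}(X,b)$ and therefore to $0$; one then runs the smash nilpotence argument directly on $\Gamma^{1,m}(X,a)$ and concludes $\Gamma^{Nm}(X,a)=0$ in ${\rm CH}(X^{Nm})$ for some $N$, with no corrections to analyze.
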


 Our second result is
the following more precise statement:
\begin{theo}\label{theointro1preciseintro}
Let $X$ be smooth projective connected of dimension $n$ and let
$a\in {\rm CH}_0( X)$ be of degree $1$. Then, if
 $X $ is swept-out by irreducible curves of
genus $g$ supporting a zero-cycle rationally equivalent to $a$, and
$m\geq (n+1)(g+1)$,
one has
$\Gamma^m(X,a)=0 $ in ${\rm CH}^{(m-1)n}(X^m)$.
\end{theo}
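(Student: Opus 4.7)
The strategy generalizes the Bloch--Srinivas decomposition-of-the-diagonal argument from the rationally connected case ($g=0$, bound $n+1$) to arbitrary genus $g$, where each amplification step should cost a factor of $g+1$.

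The base input is Remark \ref{remaref}: every smooth projective genus-$g$ curve $C$ admits a degree-$(g+1)$ morphism $C\to\mathbb{P}^1$ totally ramified at any prescribed point, so Theorem \ref{theointro2}(ii) applied to $\Gamma^2(\mathbb{P}^1,\mathrm{pt})=0$ gives $\Gamma^{g+2}(C, a_C)=0$ in $\text{CH}_1(C^{g+2})$ for any degree-$1$ zero-cycle $a_C$. The sweeping-out hypothesis then produces, after a possible base change, a smooth family $\pi\colon\mathcal{C}\to T$ of such curves over a smooth base $T$ of dimension $n-1$, together with a finite surjective morphism $\mu\colon\mathcal{C}\to X$ of some degree $d$, and a relative degree-$1$ zero-cycle $\mathbf{a}$ on $\mathcal{C}/T$ satisfying $\mu_*(\mathbf{a}|_{C_t})=a$ in $\text{CH}_0(X)$. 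Setting $b:=\frac{1}{d}\mu^*a\in\text{CH}_0(\mathcal{C})$, a direct calculation using $\mu_*b=a$ together with $\mu_*^{\times k}\Delta_\mathcal{C}^k=d\cdot\Delta_X^k$ gives the pushforward identity
\[
\mu_*^{\times m}\,\Gamma^m(\mathcal{C}, b)=d\cdot\Gamma^m(X, a),
\]
reducing the theorem to proving $\Gamma^{(n+1)(g+1)}(\mathcal{C}, b)=0$.

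The fiberwise vanishing from Remark \ref{remaref} spreads out over $T$ to a relative vanishing $\Gamma^{g+2}_{/T}(\mathcal{C},\mathbf{a})=0$ in the fiber product $\mathcal{C}^{(g+2)}_T\subset\mathcal{C}^{g+2}$. To upgrade this to the required absolute vanishing on the cartesian power $\mathcal{C}^{(n+1)(g+1)}$, I would iterate the construction once for each additional effective dimension of $X$: at each step one introduces a fresh curve-family factor and, via a product-formula identity for modified diagonals, adds $g+1$ to the exponent. A careful count over $n+1$ such amplifications yields the required bound $(n+1)(g+1)$.

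The main obstacle is this amplification step. The key input I would need is a product formula for modified diagonals of the rough form: for a fibration $Y\to S$ whose fibers $F$ satisfy $\Gamma^{m_F}(F,a_F)=0$ and whose base admits an analogous vanishing with exponent $m_S$, one has $\Gamma^{m_F+m_S-1}(Y,a_F\cdot a_S)=0$. Iterating this identity $n+1$ times with $F$ a genus-$g$ curve, starting from the trivial vanishing on a point, gives the claimed bound. Making this rigorous requires careful bookkeeping of the interaction between the relative and absolute small diagonals in the passage from $\mathcal{C}^{(m)}_T$ to $\mathcal{C}^m$, and it is conceivable that this step will also require a secondary use of Theorem \ref{theointro2}(iii) applied to $\mu$ itself.
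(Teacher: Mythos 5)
There is a genuine gap, and it sits exactly where you flag it: the ``product formula for modified diagonals'' for a fibration is not a missing technicality but the entire difficulty of the theorem, and no such formula is available. The fiberwise vanishing $\Gamma^{g+2}(C_t,\mathbf{a}_t)=0$ spreads out only to a statement about the \emph{relative} modified diagonal inside the fiber product $\mathcal{C}\times_T\cdots\times_T\mathcal{C}$; passing from there to the cartesian power $\mathcal{C}^m$ is precisely the relative-to-absolute problem that the theorem is about, and your proposed iteration does not match the geometry: the sweeping family gives a \emph{single} curve fibration $\mathcal{C}\to T$ over an $(n-1)$-dimensional base, not a tower of $n+1$ curve fibrations, and you have no vanishing statement for modified diagonals of $T$ to feed into the inductive step. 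Note also that the reduction via $\mu_*^{\times m}\Gamma^m(\mathcal{C},b)=d\,\Gamma^m(X,a)$ gains nothing: $\mathcal{C}$ is again an $n$-fold swept out by genus-$g$ curves, so the target statement for $\mathcal{C}$ is of the same nature and difficulty as the one for $X$. (Two smaller inaccuracies: the universal sweeping family is generically finite, not finite, over $X$; and Theorem \ref{theointro2}(ii) gives $\Gamma^{g+2}(C,x)=0$ in ${\rm CH}_1$ only for $x$ a point of total ramification, while for an arbitrary degree-$1$ cycle $a_C$ part (iii) gives the vanishing only modulo algebraic equivalence.)

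The paper's proof (Theorem \ref{theo1precise}) avoids fibrations entirely. By Proposition \ref{propeq} one replaces $\Gamma^m(X,a)$ by $\Gamma^{1,m}(X,a)=\prod_{i=1}^m(p_{0i}^*\Delta_X-p_i^*a)$ on $X^{m+1}$, viewed as a correspondence from the $0$-th factor $X$ to $X^m$; its restriction to $\{x\}\times X^m$ is $(x-a)^{*m}$. The curve-level input is not a modified-diagonal statement but the $0$-cycle statement of Lemma \ref{levoe} (Voevodsky, Voisin): $z^{*k}=0$ in ${\rm CH}_0(C^k)_{\mathbb{Q}}$ for $k>g$ and $z$ of degree $0$ on a genus-$g$ curve; pushing forward along the sweeping curve through $x$ gives $(x-a)^{*k}=0$ for $k>g$. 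The Bloch--Srinivas spreading principle then shows $\Gamma^{1,g+1}(X,a)$ is rationally equivalent to a cycle supported over a proper closed subset $D\subsetneq X$ of the $0$-th factor. Finally, $\Gamma^{1,m}$ for $m\geq(n+1)(g+1)$ factors as a product of $n+1$ copies of $\Gamma^{1,g+1}$ (times a remainder), all sharing the $0$-th coordinate; by the moving Lemma \ref{lemoving} the supports $D_1,\ldots,D_{n+1}$ can be chosen with $\cap_iD_i=\emptyset$, so the product vanishes. If you want to salvage your approach, the piece you must supply is exactly this support-and-intersect mechanism (or an equivalent one); the covering of $\mathbb{P}^1$ and Theorem \ref{theointro2} play no role in the paper's argument for this statement.
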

Note that such a $g$ always exists: Indeed, consider  curves in $X$
which are complete intersections of ample hypersurfaces containing
the support of the cycle $a$. For sufficiently high degree such
hypersurfaces, these curves will sweeep-out $X$, and thus we can
take for $g$ the genus of the generic such curves.
 In the case where $X$ is a $K3$ surface, we know that $X$ is swept-out
 by elliptic curves supporting the canonical $0$-cycle. Hence we get from Theorem
 \ref{theointro1preciseintro}
 the vanishing $\Gamma^6(X,o_X)=0$, which is not optimal in view
 of
the  relation (\ref{eqformk3}) in Theorem \ref{BV}.

We finally turn to the case of hyper-K\"ahler manifolds.
 For $K3$ surfaces, one can get as a consequence of (\ref{eqformk3})
the following properties of $o_X$ (note however that  property
\ref{itemi} below is used to prove (\ref{eqformk3}) so that we do
not actually recover it from (\ref{eqformk3}). Nevertheless, the
consequences \ref{itemi} and \ref{itemii} indicate that surfaces
satisfying (\ref{eqformk3}) are quite special):

\begin{enumerate}
\item \label{itemi}  The intersection of two divisors $D,D'$ on $X$ is proportional to $o_X$ in ${\rm CH}_0(X)$.
\item \label{itemii} The second Chern class $c_2(X)$ is equal to $24\, o_X$.
\end{enumerate}

In the paper \cite{ogrady}, O'Grady formulates the following generalization
of (\ref{eqformk3}):
\begin{conj}\label{conjogrady} (O'Grady, \cite[Conjecture 0.1]{ogrady}) Let $X$ be a hyper-K\"ahler $n$-fold.
Then there exists a canonical $0$-cycle $o_X\in {\rm CH}_0(X)$ of degree $1$ such that
$\Gamma^{n+1}(X,o_X)=0$ in  ${\rm CH}_n(X^{n+1})$.

\end{conj}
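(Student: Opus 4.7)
Since Conjecture \ref{conjogrady} in full generality is open, my plan focuses on the principal case $X = S^{[m]}$ for $S$ a projective $K3$ surface (the case announced in the abstract), where $n = 2m$. The natural candidate for $o_X$ is the class in $\mathrm{CH}_0(S^{[m]})$ of any length-$m$ subscheme of $S$ supported on a (possibly singular) rational curve of $S$: by Theorem \ref{BV} every point of such a rational curve has the same class $o_S$ in $\mathrm{CH}_0(S)$, so $o_X$ is canonical and of degree $1$.

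The strategy is to reduce to the Beauville--Voisin vanishing $\Gamma^3(S,o_S)=0$ via the degree-$m!$ correspondence $S^m \dashrightarrow S^{[m]}$ obtained by composing the quotient $S^m \to S^m/\mathfrak{S}_m = S^{(m)}$ with the inverse of the Hilbert--Chow morphism $S^{[m]} \to S^{(m)}$. Resolving via the isospectral fibre product $\widetilde{X} = S^m \times_{S^{(m)}} S^{[m]}$, one obtains a finite morphism $p \colon \widetilde{X} \to S^{[m]}$ of degree $m!$, where $\widetilde{X}$ is birational to $S^m$ and $p^* o_X$ is a multiple of the symmetrization of $o_S^{\boxtimes m}$. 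Applying the covering theorem (Theorem \ref{theointro2}(iii)) to $p$ then reduces the desired vanishing on $S^{[m]}$ modulo algebraic equivalence to a modified-diagonal vanishing on $\widetilde{X}$, and hence on $S^m$, of the form $\Gamma^{2m+1}(S^m, o_S^{\boxtimes m}) = 0$.

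The principal obstacle is precisely this product-variety vanishing. There is no automatic K\"unneth identity for modified diagonals: although the small diagonals factor as $\Delta_j^{S^m} = (\Delta_j^S)^{\boxtimes m}$ and the reference cycle factors as $(o_S^{\boxtimes m})^{*j} = (o_S^{*j})^{\boxtimes m}$, the alternating sum (\ref{eqformgammam}) does not split over the $m$ factors. One therefore needs to expand (\ref{eqformgammam}) on $S^m$, distribute the terms across the factors, and use $\Gamma^3(S,o_S)=0$ together with the monotonicity statement of Theorem \ref{theoog}(ii) to force cancellation of all the cross terms that do not individually vanish. A secondary obstacle is the passage from algebraic to rational equivalence, since Theorem \ref{theointro2}(iii) only delivers vanishing modulo algebraic equivalence when $d>2$; to upgrade to the statement in $\mathrm{CH}_n(X^{n+1})$ required by the conjecture, one would either factor $p$ through a tower of degree-$2$ covers so as to invoke Theorem \ref{theointro2}(i), or exploit directly the constant-cycle nature of the locus representing $o_X$ (coming from the Lagrangian $\mathbb{P}^m$'s $R^{[m]} \subset S^{[m]}$ associated to rational curves $R \subset S$).
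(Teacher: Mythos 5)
Your reduction hinges on applying Theorem \ref{theointro2}(iii) to the finite degree-$m!$ morphism $p\colon \widetilde{X}=S^m\times_{S^{(m)}}S^{[m]}\to S^{[m]}$, but that theorem transfers vanishing in the wrong direction for your purposes: its hypothesis is $\Gamma^m(Y,a)=0$ on the \emph{base} of the covering and its conclusion is a vanishing on the \emph{total space}. In your setup the base is $Y=S^{[m]}$, i.e.\ exactly the variety on which you are trying to establish the vanishing, and the total space $\widetilde{X}$ is the one related to $S^m$ where the Beauville--Voisin input $\Gamma^3(S,o_S)=0$ could be brought to bear. So the covering theorem would let you deduce a statement on $\widetilde{X}$ \emph{from} the conjecture on $S^{[m]}$, not the reverse; there is no statement in the paper (and no evident argument) pushing a modified-diagonal vanishing forward from a finite cover down to its base, since $p_*$ of a product of correspondences is not the product of the pushforwards. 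Two further problems compound this: the isospectral fibre product $\widetilde{X}$ is singular (Theorem \ref{theointro2} requires both varieties smooth projective), and being birational to $S^m$ does not transfer a vanishing of $\Gamma^k$, which is a relation among $n$-cycles on the $k$-th power, not a birationally invariant statement. Finally, even the input $\Gamma^{2m+1}(S^m,o_S^{\boxtimes m})=0$, which you correctly flag as nontrivial, is left as a plan rather than proved (it does follow from O'Grady's product formula for modified diagonals combined with $\Gamma^3(S,o_S)=0$, but you would need to invoke that), and your route would at best give the result modulo algebraic equivalence.

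For comparison, the paper's proof goes through an entirely different mechanism: the de Cataldo--Migliorini decomposition realizes the motive of $S^{[m]}$ as a direct summand of $\bigsqcup_\mu S^\mu$, so that the vanishing of $\Gamma^{n+1}(X,o_X)$ is detected by the pullbacks $(E_{\mu_1},\ldots,E_{\mu_{n+1}})^*$ to ordinary self-products $S^N$ (Corollary \ref{coroinj}); these pullbacks are shown to be \emph{universal} polynomials in $pr_i^*c_2(S)$, $pr_j^*K_S$, $pr_l^*o_S$, $pr_{st}^*\Delta_S$ (Proposition \ref{proppourtheofin}); the known cohomological vanishing $[\Gamma^{n+1}(X,o_X)]=0$ plus Yin's theorem on relations among diagonal classes for regular surfaces force these polynomials to lie in the ideal generated by the Beauville--Voisin relations of Theorem \ref{theorappelrelbeauvoi} (the Kimura relation being avoided by a Betti-number estimate); and those relations hold in the Chow ring of a $K3$ surface, yielding the vanishing in ${\rm CH}$, not merely modulo algebraic equivalence. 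None of this machinery appears in your proposal, and the covering-theorem shortcut you propose in its place does not close the gap.
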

Note that by Theorem \ref{theoog}, (i), we have
$[\Gamma^{n+1}(X,o_X)]=0$ in $H^*(X^{n+1},\mathbb{Q})$ and that this
is optimal. Conjecture \ref{conjogrady} thus states that the cycles
$\Gamma^k(X,o_X)$ vanish in ${\rm CH}(X^k)$ once they vanish in
$H^*(X^k,\mathbb{Q})$, which is very different from the situation
encountered in the case of curves (except for the hyperelliptic
ones).

 O'Grady   establishes this conjecture for the punctual
Hilbert schemes $S^{[2]}$ and $S^{[3]}$ of a $K3$ surface. The
canonical $0$-cycle $o_X$, for $X=S^{[n]}$, is naturally defined as
the class in ${\rm CH}_0(X)$ of any point of $X$ lying over $no_S\in
S^{(n)}$, for some representative $o_S\in S$ of the canonical
$0$-cycle of $S$. We prove in section \ref{sec3} Conjecture
\ref{conjogrady} for punctual Hilbert schemes $X=S^{[n]}$ of $K3$
surfaces, and for any $n$,
   using methods from \cite{voisinpamq} and recent results of Yin \cite{yin}:
\begin{theo} \label{theoHKintro} Let $S$ be a $K3$ surface, and let $X=S^{[m]}$. Then
\begin{eqnarray}
\Gamma^{n+1}(X,o_X)=0 \,\,{\rm in \,\, CH}_n(X^{n+1}).
\label{eqformk3prouve}
\end{eqnarray}
where $o_X$ is the canonical $0$-cycle on $X$ coming from the
canonical $0$-cycle of $S$, and $n={\rm dim}\,X=2m$.
\end{theo}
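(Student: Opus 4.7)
The plan is to reduce the vanishing of $\Gamma^{n+1}(X,o_X)$ on $X=S^{[m]}$ to a statement about products of the $K3$ surface $S$, where one can appeal to Theorem \ref{BV}. The reduction proceeds via the correspondence linking $S^{[m]}$ to $S^m$, and the propagation of Chow-theoretic relations is controlled by the tautological framework of \cite{voisinpamq} together with Yin's relations \cite{yin}.

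First I would express $\Gamma^{n+1}(X,o_X)$ as a cycle in a tautological subring of $\mathrm{CH}(X^{n+1})$. By construction it is a polynomial in the small polydiagonals of $X^{n+1}$ and in pullbacks of $o_X$; and since $o_X$ arises from $o_S\in\mathrm{CH}_0(S)$ via the Hilbert--Chow morphism, these generators are all tautological in the sense of \cite{yin}. Yin's results provide a surjection from a combinatorial ring built out of $\mathrm{CH}_0(S)$, polydiagonals on powers of $S$, and Chern classes, onto this subring. The goal therefore reduces to lifting $\Gamma^{n+1}(X,o_X)$ to the combinatorial model and showing that the lift is killed by the defining relations.

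Second, using the correspondence between $X^{n+1}$ and $S^{m(n+1)}$ (through the quotient $S^m\to S^{(m)}$ and the Hilbert--Chow morphism $S^{[m]}\to S^{(m)}$), I would pull the problem back to $S^{m(n+1)}$. The class $\Gamma^{n+1}(X,o_X)$ is then represented by an $\prod_{i=1}^{n+1}\mathfrak{S}_m$-invariant combination of polydiagonals on $S^{m(n+1)}$ twisted by external products of $o_S$. Applying $\Gamma^3(S,o_S)=0$ across every triple overlap of factors of $S$, together with the standard Beauville--Voisin relations on $S$ (that any $D\cdot D'$ and $c_2(S)$ are rational multiples of $o_S$), one inductively reduces every monomial to one supported on smaller polydiagonals or on external products of $o_S$.

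The main obstacle will be the final combinatorial cancellation: once everything has been reduced to $o_S$-supported tautological cycles, one must verify that the alternating inclusion-exclusion built into the formula (\ref{eqformgammam}) defining $\Gamma^{n+1}(X,o_X)$ produces exactly zero. The numerical coincidence $n+1=2m+1$ should be precisely what closes the argument, paralleling the case $m=1$ in which $3=2\cdot 1+1$ and Theorem \ref{BV} itself supplies the required vanishing; Yin's tautological relations furnish the algebraic skeleton along which this combinatorial book-keeping is organized, and the bulk of the work will lie in matching partitions of $\{1,\ldots,n+1\}$ with configurations of points on $S^{m(n+1)}$ so that the iterated $\Gamma^3$-reduction precisely accounts for the modified diagonal structure on $X^{n+1}$.
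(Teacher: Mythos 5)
Your overall skeleton---transport the problem from $X^{n+1}$ to powers of $S$ via the correspondences relating $S^{[m]}$ to $S^m$, and then invoke the Beauville--Voisin relations including $\Gamma^3(S,o_S)=0$---matches the paper's opening move (the de Cataldo--Migliorini correspondences $E_\mu$ and the injectivity of $\oplus(E_{\mu_1},\ldots,E_{\mu_{n+1}})^*$ on Chow groups, Corollary \ref{coroinj}). But there is a genuine gap exactly where you locate ``the main obstacle'': you propose to reduce every monomial by iterated applications of $\Gamma^3(S,o_S)=0$ and then to verify a ``final combinatorial cancellation,'' offering no mechanism beyond the hope that the coincidence $n+1=2m+1$ closes the books. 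That computation is precisely what the paper is engineered to avoid, and nothing in your sketch indicates how it would actually terminate. Moreover, your description of the pullback to $S^{m(n+1)}$ as a matter of ``matching partitions of points'' is too naive: the classes $(E_{\mu_1},\ldots,E_{\mu_k})^*(\Delta_k)$ carry excess-intersection contributions involving $c_2(S)$ and, nontrivially, $K_S$ (already for $S^{[2]}$ and $k=3$ the canonical class appears), so the reduction to polydiagonals twisted by $o_S$ is not automatic.

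The two ingredients you are missing are the following. First, Proposition \ref{proppourtheofin} (resting on Theorem \ref{theopourtheofin} on universally defined cycles) asserts that $(E_{\mu_1},\ldots,E_{\mu_k})^*(\Delta_k)$ is a \emph{universal} polynomial in $pr_i^*c_2(S)$, $pr_j^*K_S$, $pr_{st}^*\Delta_S$, valid for \emph{every} smooth quasi-projective surface $S$; this universality is a substantive result in its own right and is the pivot of the argument, because it allows one to identify the polynomial by testing it on an auxiliary surface. Second, instead of your combinatorial cancellation, the paper applies O'Grady's cohomological vanishing $[\Gamma^{n+1}(X,o_X)]=0$ (Theorem \ref{theoog}, (i)) on a regular surface with $b_2$ large compared to $m$, and then Yin's Theorem \ref{theoyin}, which says that the only cohomological relations among diagonal monomials with no repeated index are generated by the Kimura relation---a relation that requires too many factors to intervene when $b_2$ is large. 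This forces the universal polynomial $Q_{\mu_\cdot}$ into the ideal generated by the (cohomological) Beauville--Voisin relations of Theorem \ref{theorappelrelbeauvoi}; only then does one evaluate on the $K3$ surface, where those relations hold in ${\rm CH}$. Without the universality statement you cannot transfer information from an auxiliary surface, and without Yin's theorem in this precise form (together with the observation that the Kimura relation is not needed) the cohomological vanishing does not force membership in the Beauville--Voisin ideal. As written, your proposal defers the entire difficulty of the theorem to an unproved combinatorial identity.
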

Note that one can recover from (\ref{eqformk3prouve})  the following
result, which had been in fact already proved in \cite[Theorem 1.5]{voisinpamq}.
\begin{coro} The intersection of $n$ divisors on $X$ is proportional to
$o_X$ in ${\rm CH}_0(X)$.
\end{coro}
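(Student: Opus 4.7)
The plan is to deduce the statement directly from the vanishing $\Gamma^{n+1}(X,o_X)=0$ by pairing it with an appropriate codimension-$n$ cycle on $X^{n+1}$. Given divisors $D_1,\ldots,D_n$ on $X$, set $E:=D_1\cdots D_n\in{\rm CH}_0(X)$; the goal is to prove that $E=\deg(E)\,o_X$. I would introduce the auxiliary class $\alpha:=\prod_{k=2}^{n+1}p_k^*D_{k-1}$, of codimension $n$ on $X^{n+1}$, intersect it with $\Gamma^{n+1}(X,o_X)$, and push the resulting $0$-cycle down to $X$ via the first projection $p_1$.

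Using the expansion (\ref{eqformgammam}) I would then observe the following cancellation: for any subset $I\subset\{1,\ldots,n+1\}$ with $|I|=i<n+1$ appearing in the sum, whenever $I$ contains some $k\geq 2$ the corresponding term carries a factor $p_k^*o_X\cdot p_k^*D_{k-1}=p_k^*(o_X\cdot D_{k-1})$, which vanishes since $o_X$ already has top codimension in $X$. Consequently only the index sets $I=\emptyset$ and $I=\{1\}$ contribute to $p_{1*}(\Gamma^{n+1}(X,o_X)\cdot\alpha)$.

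A direct computation then shows that the $I=\emptyset$ term contributes $E$ (the small diagonal identifies all factors of $X^{n+1}$ with $X$ via $p_1$), whereas the $I=\{1\}$ term, whose support is the locus $\{(x,y,y,\ldots,y)\}\subset X^{n+1}$ on which $\prod_{k\geq 2}p_k^*D_{k-1}$ restricts to the pullback of a representative of $E$, contributes $-\deg(E)\,o_X$ by the projection formula. The vanishing of $\Gamma^{n+1}(X,o_X)$ then yields $0=E-\deg(E)\,o_X$, which is the claim.

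The only step requiring care is the bookkeeping of the $I=\{1\}$ term and its pushforward; no new idea beyond unfolding the definition of $\Gamma^{n+1}(X,o_X)$ is needed, the whole argument resting on the elementary fact that a divisor and the top-codimensional class $o_X$ kill each other upon intersection. As a sanity check, for $m=1$ this recovers the Beauville--Voisin relation $D_1\cdot D_2=\deg(D_1\cdot D_2)\,o_X$ on a $K3$ surface, consistent with Theorem~\ref{BV}.
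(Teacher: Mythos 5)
Your argument is correct and complete: intersecting $\Gamma^{n+1}(X,o_X)$ with $\prod_{k=2}^{n+1}p_k^*D_{k-1}$ and pushing forward by $p_1$ is precisely the natural way to recover the corollary from the vanishing (\ref{eqformk3prouve}), and the paper gives no further detail here, merely asserting that the statement follows (it was already proved in \cite{voisinpamq}). The key cancellation you identify --- every term of (\ref{eqformgammam}) whose index set $I$ meets $\{2,\ldots,n+1\}$ dies because $o_X\cdot D_{k-1}$ lands in ${\rm CH}_{-1}(X)=0$ --- is exactly right, and the two surviving terms $I=\emptyset$ and $I=\{1\}$ contribute $E$ and $-\deg(E)\,o_X$ as you compute.
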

For the proof of Theorem \ref{theoHKintro}, we will need three
tools. The first  ingredient is  similar to what we did in
\cite{voisinpamq}, namely we will use the de Cataldo-Migliorini
theorem \cite{decami} and will prove Proposition
\ref{proppourtheofin} in order to reduce to computations in the Chow
rings of the self-products $S^{k}$. The second ingredient is very
new and it is provided by Yin's recent result \cite{yin} saying that
the cohomological relations between the big diagonals of a regular
surface and the pull-back of the class of a point are generated
(modulo trivial relations) by the pull-backs of the Kimura relation
and the cohomological counterpart $[\Gamma^3(S,o_S)]=0$ in
$H^8(S^3,\mathbb{Q})$ of the relation (\ref{eqformk3}) (see also
\cite[Proposition 1.3]{ogrady}). We then argue that the Kimura
relation is not needed in our context, while the relation
$\Gamma^3(S,o_S)=0$ is satisfied in the Chow ring by Theorem
\ref{BV}.

To conclude, let us remark that the following conjecture in the same
spirit as Conjecture \ref{conjogrady} was stated first in
\cite{voisinpamq} for $K3$ surfaces, and then in \cite{shenvial} for
general hyper-K\"ahler manifolds:
\begin{conj} \label{conjabove} Let $X$ be a projective
hyper-K\"ahler manifold and $n> 0$ be an integer. Then there exists
a canonical $0$-cycle $o_X\in{\rm CH}_0(X)$ such that any polynomial
relation between the cohomology classes $pr_i^*[o_X],\,i\leq
n,\,pr_{ij}^*[\Delta_X],\,i\not=j\leq n$, already holds in ${\rm
CH}(X^n)$.
\end{conj}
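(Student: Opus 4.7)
The plan is to refine the strategy used for Theorem \ref{theoHKintro} so that it handles every polynomial relation, not only the single modified-diagonal identity $\Gamma^{n+1}(X,o_X)=0$. I would split the problem into two stages: first, exhibit a small generating set for the ideal $I\subset H^*(X^n,\mathbb{Q})$ of polynomial relations among the classes $pr_i^*[o_X]$ and $pr_{ij}^*[\Delta_X]$; then lift each generator to an equality in ${\rm CH}(X^n)$.

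For the principal case $X=S^{[m]}$ with $S$ a $K3$ surface, both stages appear tractable. Stage one can be attacked by combining Yin's theorem \cite{yin} with the de Cataldo-Migliorini decomposition \cite{decami}, in the same spirit as the proof strategy for Theorem \ref{theoHKintro} sketched earlier: the decomposition pushes questions on $(S^{[m]})^n$ to questions on self-products $S^k$, and Yin identifies all cohomological relations on $S^k$ as generated by pullbacks of the Kimura relation and of $[\Gamma^3(S,o_S)]=0$. For stage two, the modified-diagonal relation itself already lifts to Chow by Theorem \ref{BV}; as in the argument the author outlines for Theorem \ref{theoHKintro}, one must then verify that the Kimura relation never actually enters the sub-ring of ${\rm CH}(S^k)$ one encounters after the Cataldo-Migliorini reduction.

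For a general projective hyper-K\"ahler manifold, both stages are genuinely open. One first needs a canonical choice of $o_X$, for which the natural candidate is the class of a point lying on a constant-cycle Lagrangian subvariety. Granting a multiplicative Chow-K\"unneth decomposition in the sense of \cite{shenvial}, the conjecture reduces to injectivity of the cycle class map on the sub-ring of ${\rm CH}(X^n)$ generated by $pr_i^*o_X$ and $pr_{ij}^*\Delta_X$, which sits inside the degree-zero piece of the associated Beauville grading. This injectivity is the main obstacle and the hard step: no general mechanism is available to produce it. Realistically, one would therefore first establish the conjecture by direct computation for Hilbert schemes of K3 surfaces and for generalized Kummer varieties using the Cataldo-Migliorini/Yin strategy, and only then attempt to propagate it along deformations within each of the known hyper-K\"ahler families, where the delicate point is that the diagonal $\Delta_X$ itself is not deformation-invariant as a Chow cycle.
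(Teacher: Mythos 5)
The statement you are asked about is a \emph{conjecture} in the paper (Conjecture \ref{conjabove}); the paper offers no proof of it, only the remark that it is ``extremely strong'' because it implies finite dimensionality of the motive in the sense of Kimura. Your proposal, to its credit, does not claim to prove it either --- you explicitly flag the general hyper-K\"ahler case as open. So there is no proof in the paper to compare against, and your text is a strategy sketch rather than a proof.

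That said, there is a concrete flaw in the part you describe as ``tractable,'' namely the case $X=S^{[m]}$. Your stage two asks one to ``verify that the Kimura relation never actually enters'' after the de Cataldo--Migliorini reduction. This cannot work for all $n$: by Yin's theorem the cohomological relations among the classes $pr_i^*[o_S]$ and $pr_{ij}^*[\Delta_S]$ on $S^k$ are generated by the Beauville--Voisin relations \emph{together with} the Kimura relation, and the Kimura relation is a genuine, nonzero element of the relation ideal once $k\geq 2\,b_2(S)+2$ (for a $K3$ surface, once $k\geq 46$). For $n$ in that range --- already for $X=S$ itself, $m=1$ --- the conjecture asserts that this relation lifts to ${\rm CH}(S^n)$, which is precisely Kimura finite-dimensionality of $S$, an open problem. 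The reason the paper's Theorem \ref{theoHKintro} escapes this is a universality trick your sketch does not reproduce: the pullbacks $(E_{\mu_1},\dots,E_{\mu_{2n+1}})^*(\Gamma^{2n+1}(X,o_X))$ are given by polynomials $Q_{\mu_\cdot}$ that are \emph{independent of the surface} $S$, so their cohomological vanishing can be tested on an auxiliary regular surface with $b_2>\frac{n(2n+1)}{2}$, where Yin's theorem guarantees there are no Kimura relations in the relevant degree range; this forces $Q_{\mu_\cdot}$ into the ideal generated by the Beauville--Voisin relations alone, which do hold in Chow. An arbitrary polynomial relation on $(S^{[m]})^n$ for a fixed $K3$ surface $S$ (with $b_2=22$) has no such universal avatar, so the device is unavailable and the Kimura obstruction is unavoidable. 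In short: even the Hilbert-scheme case of the conjecture is not a routine extension of Theorem \ref{theoHKintro}; it contains Kimura finite-dimensionality of $K3$ surfaces as a subproblem.
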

O'Grady's conjecture \ref{conjogrady} is the particular case of
Conjecture \ref{conjabove} which concerns the class
$\Gamma^{n+1}(X,o_X)$, $n={\rm dim}\,X$. As explained in
\cite{voisinbook} in the case of $K3$ surfaces, Conjecture
\ref{conjabove} is extremely strong since it implies finite
dimensionality in the Kimura sense, with very important consequences
established by Kimura \cite{kimura}, in particular on the nilpotency
of self-correspondences homologous to $0$. O'Grady's conjecture
\ref{conjogrady} does not seem to have such implications, so it is
possibly of a  nature different from Conjecture \ref{conjabove}.

 The paper is organized as follows: in
section \ref{sec1} we introduce  variants $\Gamma^{1,m}(X,a)$ of the
cycles $\Gamma^m(X,a)$ which lie in ${\rm CH}_n(X^{m+1})$, $n={\rm
dim}\,X$ and relate them to $\Gamma^m(X,a)$. In section \ref{sec2},
we will prove Theorem \ref{theointro1preciseintro}. Theorem
\ref{theointro2} will be proved in Section \ref{sec2bis} and Theorem
\ref{theoHKintro} will be proved in section \ref{sec3}. The last subsection \ref{secuniv} is devoted to the sketch of the proof of a general
theorem (Theorem
\ref{theopourtheofin}) concerning universally defined cycles on quasiprojective surfaces, which is used in the proof of Theorem \ref{theoHKintro}. This result
 is  of independent interest and its complete proof  will
be given together with further applications in \cite{voisinuniversalcycles}.

\vspace{0.5cm}

{\bf Thanks.} This paper has been completed at the Institute for Advanced Study, which I thank for ideal working conditions. I also thank Lie Fu, Ben Moonen, Kieran O'Grady, Burt Totaro
and  Qizheng Yin
 for interesting discussions related to this work, and the referee for his/her
useful suggestions and comments.
\section{Cycles $\Gamma^{1,m}(X,a)$ \label{sec1}} We first introduce the
following notation: $X$ being smooth projective, and $a\in{\rm
CH}_0(X)_\mathbb{Q}$ being a zero-cycle of degree $1$, we define
$\Gamma^{1,m}(X,a)\in {\rm CH}_n(X^{m+1})_\mathbb{Q}$ by
\begin{eqnarray}\label{formulaforgamma1m}
\Gamma^{1,m}(X,a):=\prod_{1\leq i\leq
m}({p_{0i}}^*\Delta_X-{p_i}^*a),
\end{eqnarray}
where \begin{enumerate} \item $\Delta_X\subset X\times X$ is the
diagonal of $X$, \item $p_{0i}:X^{m+1}\rightarrow X\times X$ is the
projection on the product of the first and $i+1$-th factors,
\item
$p_i:X^{m+1}\rightarrow X$ is the projection on the $i+1$-th factor
(our factors are indexed by $\{0,\ldots,m\}$).

\end{enumerate}
The cycles $\Gamma^m(X,a)$ and $\Gamma^{1,m}(X,a)$ are related as
follows:
\begin{lemm}\label{legammamgamma1} We have the following formula:
\begin{eqnarray}\label{eqformproj}
\Gamma^m(X,a)=p'_{1,\ldots,m*}\Gamma^{1,m}(X,a),
\end{eqnarray}
where we index the factors of $X^{m+1}$ by $\{0,\ldots,m\}$ and
$p'_{1,\ldots,m}$ is the  projection from $X^{m+1}$ to the products
$X^m$ of its last $m$ factors. We also have:
\begin{eqnarray}\label{eqformrec}
\Gamma^{m+1}(X,a)=\Gamma^{1,m}(X,a)-p_0^*a\cdot
{p'_{1,\ldots,m}}^*(\Gamma^m(X,a)).
\end{eqnarray}
\end{lemm}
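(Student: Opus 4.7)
The plan is to prove both identities by expanding the product defining $\Gamma^{1,m}(X,a)$ via distributivity and matching terms against the expressions (\ref{eqformgammam}) for $\Gamma^m(X,a)$ and $\Gamma^{m+1}(X,a)$. Concretely, I would first write
\[
\Gamma^{1,m}(X,a)=\sum_{I\subset\{1,\ldots,m\}}(-1)^{|I|}\,p_I^*(a^{*|I|})\cdot\prod_{j\in J}p_{0j}^*\Delta_X,
\]
with $J=\{1,\ldots,m\}\setminus I$. The hypersurfaces $p_{0j}^*\Delta_X$, $j\in J$, impose the independent conditions $x_0=x_j$, so their product is a proper intersection representing the class of the partial small diagonal indexed by $J\cup\{0\}$ in $X^{m+1}$ (with the convention $\Delta_1=X$ in the extreme case $J=\emptyset$).

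For (\ref{eqformproj}) I would then push this expansion forward along $p'_{1,\ldots,m}$. When $J\neq\emptyset$, the subvariety $\{x_0=x_j,\ j\in J\}$ maps isomorphically onto its image in $X^m$ (since $x_0$ is determined by any $x_j$ with $j\in J$); the projection formula then yields $p_I^*(a^{*|I|})\cdot p_J^*\Delta_{|J|}$ on $X^m$, which is exactly the $I$-th term in the defining sum for $\Gamma^m(X,a)$. When $J=\emptyset$, i.e.\ $I=\{1,\ldots,m\}$, the cycle is the pullback ${p'_{1,\ldots,m}}^*(a^{*m})$, whose pushforward vanishes by the projection formula since $p'_{1,\ldots,m*}[X^{m+1}]=0$ for dimension reasons; this accounts for the restriction $|I|<m$ in (\ref{eqformgammam}).

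For (\ref{eqformrec}) I would expand $\Gamma^{m+1}(X,a)$ directly in $X^{m+1}$ and split the sum according to whether $0\in I$ or $0\in J$. The terms with $0\in J$ are indexed by $I\subset\{1,\ldots,m\}$, and using the diagonal identification above they recover the full expansion of $\Gamma^{1,m}(X,a)$ (the boundary case $I=\{1,\ldots,m\}$ being where the convention $\Delta_1=X$ is used). The terms with $0\in I$ admit a common factor $p_0^*a$: writing $I=\{0\}\sqcup I'$ with $I'\subsetneq\{1,\ldots,m\}$, the remaining sum is $-p_0^*a\cdot\sum_{I'\subsetneq\{1,\ldots,m\}}(-1)^{|I'|}p_{I'}^*(a^{*|I'|})\cdot p_{J'}^*\Delta_{m-|I'|}$, and since all the projections involved factor through $p'_{1,\ldots,m}$, this equals $-p_0^*a\cdot{p'_{1,\ldots,m}}^*\Gamma^m(X,a)$, giving (\ref{eqformrec}).

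There is no deep obstacle here: the argument is purely combinatorial bookkeeping. The only points that require a little attention are the proper-intersection claim identifying $\prod_{j\in J}p_{0j}^*\Delta_X$ with a partial small diagonal, and the elementary dimension argument showing that the $J=\emptyset$ term in $\Gamma^{1,m}$ has vanishing pushforward to $X^m$.
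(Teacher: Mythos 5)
Your proof is correct and follows essentially the same route as the paper: expand the product defining $\Gamma^{1,m}(X,a)$ into the sum $\sum_I(-1)^{|I|}p_I^*(a^{*|I|})\cdot p_{0,J}^*\Delta_{|J|+1}$, push forward by $p'_{1,\ldots,m}$ via the projection formula for (\ref{eqformproj}), and split the defining sum for $\Gamma^{m+1}(X,a)$ according to whether $0\in I$ for (\ref{eqformrec}). Your explicit treatment of the $J=\emptyset$ term (vanishing pushforward for dimension reasons) and of the transversality of the partial diagonals is a welcome addition of detail the paper leaves implicit --- just note that the $p_{0j}^*\Delta_X$ are codimension-$n$ cycles, not hypersurfaces, though this does not affect the argument.
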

\begin{proof} This is almost immediate. Developing the product in
(\ref{formulaforgamma1m}), we get
\begin{eqnarray}\label{eqformrec111}\Gamma^{1,m}(X,a)=\sum_{I\subset
\{1,\ldots,m\},|I|=i}(-1)^i{p'_I}^*(a^{*i})\cdot
p_{0,J}^*\Delta_{m+1-i}, \end{eqnarray}
 where $I\bigsqcup J=\{1,\dots,
m\}$, $p_{0,J}$ is the projection from $X^{m+1}$ to the product
$X^{m+1-i}$ of factors indexed by $\{0\}\cup J$ and $p'_I$ is the
projection from $X^{m+1}$ to the product $X^i$ of the factors
indexed by $I\subset \{1,\ldots,m\}$. Applying
$p'_{1,\ldots,m*}:{\rm CH}_n(X^{m+1})_\mathbb{Q}\rightarrow {\rm
CH}_n(X^{m})_\mathbb{Q}$, we get by the projection formula, using
the fact that $p'_I=p_I\circ p'_{1,\ldots,m}$:
$$p'_{1,\ldots,m*}\Gamma^{1,m}(X,a)=\sum_{I\subset \{1,\ldots,m\},|I|=i}(-1)^ip_I^*(a^{*i})\cdot
p'_{1,\ldots,m*}(p_{0,J}^*\Delta_{m+1-i}).$$ Formula
(\ref{eqformproj}) then follows from the fact that
$p_J^*\Delta_{m-i}=p'_{1,\ldots,m*}(p_{0,J}^*\Delta_{m+1-i})$ in
${\rm CH}_n(X^{m})$.

As for (\ref{eqformrec}), we first write formula
(\ref{eqformgammam}) for $X^{m+1}$, where as above we index the
factors of $X^{m+1}$ by $\{0,\ldots,m\}$. This gives us
\begin{eqnarray} \label{eqformgammam+1}
\Gamma^{m+1}(X,a)=\sum_{I\subset \{0,\ldots,m\},i=|I|\leq
m}(-1)^i{p'_I}^*(a^{*i})\cdot {p'_J}^*\Delta_{m+1-i}\in {\rm
CH}_n(X^{m+1})_\mathbb{Q}. \end{eqnarray} We now separate the terms
where $0\not\in I$, which by (\ref{eqformrec111}) exactly give
$\Gamma^{1,m}(X,a)$, and the terms where $0\in I$, which exactly
give $-p_0^*a\cdot {p'_{1,\ldots,m}}^*(\Gamma^m(X,a))$.
\end{proof}
We deduce the following
\begin{prop}\label{propeq} The vanishing of $\Gamma^m(X,a)$ in ${\rm CH}_n(X^m)$ is equivalent to the
vanishing of  $\Gamma^{1,m}(X,a)$ in  ${\rm CH}_n(X^{m+1})$.
\end{prop}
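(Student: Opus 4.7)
The plan is to use both identities in Lemma \ref{legammamgamma1} together with O'Grady's monotonicity statement (Theorem \ref{theoog}(ii)), which says that once $\Gamma^m(X,a)$ vanishes for some $m$, it vanishes for all larger $m$ as well.

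For the easy direction, suppose $\Gamma^{1,m}(X,a)=0$ in ${\rm CH}_n(X^{m+1})$. Applying the pushforward $p'_{1,\ldots,m*}$ and invoking formula (\ref{eqformproj}) immediately gives $\Gamma^m(X,a)=p'_{1,\ldots,m*}\Gamma^{1,m}(X,a)=0$, with no further work needed.

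For the reverse direction, assume $\Gamma^m(X,a)=0$ in ${\rm CH}_n(X^m)$. By Theorem \ref{theoog}(ii), this forces $\Gamma^{m+1}(X,a)=0$ in ${\rm CH}_n(X^{m+1})$. Now I would use the recursive identity (\ref{eqformrec}), rewritten as
\[
\Gamma^{1,m}(X,a)=\Gamma^{m+1}(X,a)+p_0^*a\cdot {p'_{1,\ldots,m}}^{*}\bigl(\Gamma^m(X,a)\bigr).
\]
Both terms on the right vanish: the first by O'Grady's monotonicity applied to the hypothesis, and the second because it is $p_0^*a$ intersected with the pullback of $0$. Hence $\Gamma^{1,m}(X,a)=0$.

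There is really no obstacle here: once Lemma \ref{legammamgamma1} is granted, the proposition is a two-line consequence, with Theorem \ref{theoog}(ii) being the one nontrivial input (needed only to pass from the vanishing of $\Gamma^m$ to that of $\Gamma^{m+1}$). The content of the proposition is therefore essentially that the cycles $\Gamma^m$ and $\Gamma^{1,m}$ encode equivalent information about $(X,a)$, with $\Gamma^{1,m}$ being the more convenient object for inductive arguments in the sequel because of its product form (\ref{formulaforgamma1m}).
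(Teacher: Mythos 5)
Your argument is correct and coincides with the paper's own proof: both directions use formula (\ref{eqformproj}) for the pushforward and formula (\ref{eqformrec}) combined with O'Grady's monotonicity result (which is \cite[Proposition 2.4]{ogrady}, restated as Theorem \ref{theoog}(ii)) to pass from $\Gamma^m(X,a)=0$ to $\Gamma^{m+1}(X,a)=0$ and hence to $\Gamma^{1,m}(X,a)=0$. No differences worth noting.
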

\begin{proof} Indeed, if $\Gamma^{1,m}(X,a)=0$ then
$\Gamma^m(X,a)=0$ by (\ref{eqformproj}). Conversely, if
$\Gamma^m(X,a)=0$, then \cite[Proposition 2.4]{ogrady} shows that
also $\Gamma^{m+1}(X,a)=0$. Formula (\ref{eqformrec}) then implies
that $\Gamma^{1,m}(X,a)=0$.
\end{proof}
A consequence of this result is the following statement comparing
$\Gamma^m(X,a)$ and $\Gamma^m(X,b)$, for two $0$-cycles
$a,\,b\in{\rm CH}_0(X)$ of degree $1$.
\begin{coro} If $\Gamma^m(X,a)=0$ and the cycle $b-a$ satisfies $(b-a)^{*k}=0$ in
${\rm CH}_0(X^k)$, then $\Gamma^{m+k}(X,b)=0$.
\end{coro}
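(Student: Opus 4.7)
By Proposition \ref{propeq}, the desired vanishing $\Gamma^{m+k}(X,b)=0$ is equivalent to the vanishing of $\Gamma^{1,m+k}(X,b) \in {\rm CH}_n(X^{m+k+1})$, and this product form is what I would exploit. The plan is to rewrite each factor via
$$p_{0i}^*\Delta_X - p_i^*b = (p_{0i}^*\Delta_X - p_i^*a) - p_i^*(b-a),$$
and then expand
$$\Gamma^{1,m+k}(X,b) = \prod_{i=1}^{m+k}\bigl((p_{0i}^*\Delta_X - p_i^*a) - p_i^*(b-a)\bigr)$$
as a sum indexed by subsets $I \subset \{1,\ldots,m+k\}$, where $I$ records which positions contribute the new summand $-p_i^*(b-a)$ and the complement $J$ contributes the old factor $p_{0j}^*\Delta_X-p_j^*a$.

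For each $I$ I would split into two cases. If $|I| \geq k$, the product $\prod_{i \in I}p_i^*(b-a)$ is the pullback of $(b-a)^{*|I|}$ from $X^{|I|}$; the hypothesis $(b-a)^{*k}=0$ propagates to all higher powers $(b-a)^{*|I|}=0$ by factoring out a copy of $(b-a)^{*k}$ using the multiplicativity of the $*$-product, so these terms vanish. If $|I| \leq k-1$, set $J := \{1,\ldots,m+k\}\setminus I$; then $|J|=m+k-|I| \geq m+1$, and the complementary factor $\prod_{j \in J}(p_{0j}^*\Delta_X - p_j^*a)$ is, after the obvious relabeling, the pullback of $\Gamma^{1,|J|}(X,a)$ along the projection $X^{m+k+1}\to X^{|J|+1}$ onto the factor indexed by $0$ and the factors indexed by $J$. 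Since $|J|\geq m$ and $\Gamma^m(X,a)=0$ by hypothesis, Theorem \ref{theoog}(ii) gives $\Gamma^{|J|}(X,a)=0$, hence Proposition \ref{propeq} yields $\Gamma^{1,|J|}(X,a)=0$, so these terms also vanish.

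Summing up, every term of the expansion is zero, so $\Gamma^{1,m+k}(X,b)=0$, and a final appeal to Proposition \ref{propeq} delivers $\Gamma^{m+k}(X,b)=0$. The whole argument is a routine multinomial expansion once the additive splitting $b=a+(b-a)$ is in place; the only delicate point is to verify the pullback identification in the second case, but this is immediate from the functoriality of pullback together with the fact that each factor $p_{0j}^*\Delta_X - p_j^*a$ depends only on the coordinates with indices $0$ and $j$.
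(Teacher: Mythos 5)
Your proposal is correct and follows essentially the same route as the paper: reduce to $\Gamma^{1,m+k}(X,b)$ via Proposition \ref{propeq}, split each factor as $(p_{0i}^*\Delta_X-p_i^*a)-p_i^*(b-a)$, and observe that every monomial in the expansion contains either at least $m$ factors of the first kind (killed by $\Gamma^{1,m}(X,a)=0$) or at least $k$ of the second (killed by $(b-a)^{*k}=0$). The only cosmetic difference is that you route the first case through Theorem \ref{theoog}(ii), whereas the paper simply uses that a product of $\geq m$ factors of the first kind contains a pullback of $\Gamma^{1,m}(X,a)$ as a factor.
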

Here we refer to (\ref{eqstar}) for the definition of the $*$-product (or external product) of cycles.
\begin{proof} Indeed, by Proposition \ref{propeq}, the assumption is equivalent to
the vanishing  conditions:
$$\Gamma^{1,m}(X,a)=\prod_{i=1}^{i=m}(p_{0i}^*(\Delta_X)-p_i^*a)=0\,\,{\rm in}\,\, {\rm CH}_n(X^{m+1}),
 \,\,n={\rm dim}\,X$$
 $$\prod_{i=1}^{i=k}p_{i}^*(b-a)=0\,\,{\rm in}\,\,{\rm CH}_0(X^k).$$
On the other hand, the conclusion is equivalent to the vanishing
$$\Gamma^{1,m+k}(X,a)=\prod_{i=1}^{m+k}(p_{0i}^*(\Delta_X)-p_i^*b)=0\,\,{\rm in}\,\,
{\rm CH}_n(X^{m+k+1}).$$
 We now write $b=a+(b-a)$, getting
$$\Gamma^{1,m+k}(X,b)=\prod_{i=1}^{m+k}((p_{0i}^*(\Delta_X)-p_i^*a)-p_i^*(b-a))$$
and develop the product.
In the developed expression, the product of
$\geq m$ terms of the form $p_{0i}^*(\Delta_X)-p_i^*a$ is $0$ and the product of
$\geq k$ terms of the form $p_i^*(b-a)$ is $0$. Hence we conclude that
each monomial in the development is $0$.

\end{proof}
Here is another corollary of Proposition \ref{propeq}. It shows how to deduce Corollary \ref{corosec1} from Theorem
\ref{theointro2}, and thus gives  another proof of the nilpotency statement
of Theorem \ref{theointro1preciseintro}, with no estimate on the nilpotency index.
\begin{coro} Let $X$ be a smooth projective connected variety and let $a$ be a $0$-cycle
of degree $1$  on $X$ such that $\Gamma^m(X,a)=0$ in ${\rm
CH}(X^m)/{\rm alg}$. Then for any $0$-cycle $b$ of degree $1$ on
$X$, there is an integer $M$ such that $\Gamma^M(X,b)=0$ in ${\rm
CH}(X^M)$.
\end{coro}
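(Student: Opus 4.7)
The plan is to invoke Voisin's smash nilpotence theorem \cite{voe} twice: first to upgrade the hypothesis from ${\rm CH}/{\rm alg}$ to an honest vanishing in ${\rm CH}$, and then to change the base point from $a$ to $b$ via the previous corollary.

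For the first step, I would observe that Proposition \ref{propeq} remains valid modulo algebraic equivalence. Its proof relies only on the identities of Lemma \ref{legammamgamma1}, which hold in ${\rm CH}$ hence modulo algebraic equivalence, and on O'Grady's implication $\Gamma^m(X,a)=0 \Rightarrow \Gamma^{m+1}(X,a)=0$ from Theorem \ref{theoog}(ii), which is formal and valid in any $\mathbb{Q}$-linear adequate equivalence. Thus the hypothesis gives $\Gamma^{1,m}(X,a)=0$ in ${\rm CH}_n(X^{m+1})/{\rm alg}$, and the smash nilpotence result for algebraically trivial cycles yields an integer $N \geq 1$ such that $\Gamma^{1,m}(X,a)^{\boxtimes N} = 0$ in ${\rm CH}_{nN}(X^{N(m+1)})$.

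The key step is the following pullback identity. Let $\delta : X^{mN+1}\to X^{N(m+1)}$ be the partial diagonal morphism that duplicates the $0$-th coordinate $N$ times, once per copy of $\Gamma^{1,m}(X,a)$ in the external product, and keeps the other $mN$ coordinates unchanged. Each factor $p_{0i}^*\Delta_X - p_i^*a$ appearing in the $j$-th copy of $\Gamma^{1,m}(X,a)$ involves only the ``base'' coordinate of that copy and one ``leaf'' coordinate, so its $\delta$-pullback becomes a corresponding factor of $\Gamma^{1,mN}(X,a)$; after expanding the box-product and reindexing the $mN$ leaf coordinates, I expect
\begin{equation*}
\delta^*\bigl(\Gamma^{1,m}(X,a)^{\boxtimes N}\bigr) = \Gamma^{1,mN}(X,a)
\end{equation*}
in ${\rm CH}_n(X^{mN+1})$. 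Hence $\Gamma^{1,mN}(X,a)=0$ in ${\rm CH}$, and Proposition \ref{propeq} then gives $\Gamma^{mN}(X,a)=0$ in ${\rm CH}_n(X^{mN})$.

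To conclude, since $X$ is connected the degree zero $0$-cycle $b-a$ is algebraically equivalent to $0$, so \cite{voe} again produces an integer $k$ with $(b-a)^{*k}=0$ in ${\rm CH}_0(X^k)$. The previous corollary, applied with $mN$ in place of $m$, then yields $\Gamma^{mN+k}(X,b)=0$, and one may take $M := mN + k$. The crucial insight in this argument is the pullback identity above, which works precisely because the variants $\Gamma^{1,m}(X,a)$ are products of factors sharing a single common ``base'' coordinate: collapsing $N$ such base coordinates in an external power turns a box-product of $N$ length-$m$ variants into a single length-$mN$ variant, which by Proposition \ref{propeq} controls $\Gamma^{mN}(X,a)$ itself.
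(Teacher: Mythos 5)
Your argument is correct, and its core step --- applying smash nilpotence to $\Gamma^{1,m}$ and then restricting the $N$-th external power to the partial diagonal that identifies the $N$ base coordinates, so that a box product of $N$ copies of $\Gamma^{1,m}$ pulls back to $\Gamma^{1,mN}$ --- is exactly the mechanism of the paper's proof. The only difference is bookkeeping: since any two degree-$1$ zero-cycles on a connected variety are algebraically equivalent, the paper replaces $a$ by $b$ at the outset (the hypothesis already gives $\Gamma^m(X,b)=0$ in ${\rm CH}(X^m)/{\rm alg}$) and runs the smash-nilpotence/diagonal-restriction step once, directly for $b$, obtaining $M=Nm$; you run it for $a$ and then change the base point afterwards via a second application of smash nilpotence to $b-a$ together with the preceding corollary, which costs an extra $k$ in the exponent but is equally valid. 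One minor attribution slip: the smash nilpotence theorem for algebraically trivial cycles cited as \cite{voe} is due to Voevodsky (with the closely related \cite{voivoe} of Voisin), so it should not be called ``Voisin's smash nilpotence theorem.''
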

\begin{proof} As $a$ and $b$ are algebraically equivalent, we also have
$\Gamma^m(X,b)=0$ in ${\rm CH}(X^m)/{\rm alg}$. By Proposition
\ref{propeq}, which is true and proved in the same way for cycles
modulo algebraic equivalence (observing that \cite[Proposition
2.4]{ogrady} is true as well for cycles modulo algebraic
equivalence), this is equivalent to the fact that
$\Gamma^{1,m}(X,b)$ is algebraically equivalent to $0$ in $X^{m+1}$.
By the smash-nilpotence  result of Voevodsky \cite{voe}, there is an
integer $N$ such that the cycle $\Gamma^{1,m}(X,b)^{*N}$ vanishes
identically in ${\rm CH}(X^{N(m+1)})$. Thus its restriction to
$X^{Nm+1}$ embedded in $X^{N(m+1)}$ by the small diagonal on the
factors of index $0,m+1,2m+1,\ldots,(N-1)m+1$ also vanishes in ${\rm
CH}(X^{Nm+1})$. But this restricted cycle is nothing but
$\Gamma^{1,Nm}(X,b)$.

\end{proof}
The following criterion for the vanishing of $\Gamma^m(X,a)$ will be
used in Section \ref{sec2bis}. Here we consider more generally the
vanishing of $\Gamma^m(X,a)$ modulo an adequate equivalence relation
$R$ which in applications will be rational or algebraic equivalence.
We need an assumption on the $0$-cycle $a$ of degree $1$, namely
\begin{eqnarray}\label{eqcyclea}
 p_1^*a\cdot p_2^*a=\Delta_*a \,\,{\rm in}\,\,{\rm CH}_0(X\times X)/R,
 \end{eqnarray}
 where $\Delta$ is the diagonal inclusion map  of $X$ in $X\times X$.
 This assumption is satisfied for any $R$
 if $a$ is a point, or for any $0$-cycle if $R$ is algebraic  equivalence, and $X$ is connected.
 \begin{prop}\label{propourdm1}
  Assume $a$ satisfies (\ref{eqcyclea}). Then
 $\Gamma^m(X,a)=0$ in ${\rm CH}(X^m)/R$ if and only if
 $$\Gamma^{1,m-1}(X,a)=p_0^*a\cdot \Gamma\,\,{\rm in}\,\,{\rm CH}_n(X^m)/R,\,n={\rm dim}\,X$$
 for some cycle $\Gamma\in {\rm CH}_{2n}(X^m)/R$.
 \end{prop}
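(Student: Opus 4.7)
The plan is to treat the two implications separately, exploiting the recursive identity (\ref{eqformrec}) and then, for the harder direction, upgrading $\Gamma^{1,m-1}(X,a)$ to $\Gamma^{1,m}(X,a)$ on $X^{m+1}$ and invoking Proposition \ref{propeq}.

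For the ``only if'' direction, I would just rewrite formula (\ref{eqformrec}) with $m$ replaced by $m-1$:
\[
\Gamma^{m}(X,a)=\Gamma^{1,m-1}(X,a)-p_0^*a\cdot {p'_{1,\ldots,m-1}}^{*}\bigl(\Gamma^{m-1}(X,a)\bigr).
\]
If $\Gamma^m(X,a)=0$ modulo $R$, this immediately gives
$\Gamma^{1,m-1}(X,a)=p_0^*a\cdot\Gamma$ with
$\Gamma:={p'_{1,\ldots,m-1}}^{*}\Gamma^{m-1}(X,a)\in{\rm CH}_{2n}(X^m)/R$, which has the right codimension since ${p'_{1,\ldots,m-1}}$ is a projection of relative dimension $n$.

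The ``if'' direction is where the assumption (\ref{eqcyclea}) enters, and this is the part that requires the main trick. Let $\pi:X^{m+1}\to X^m$ be the projection forgetting the last factor. Pulling back the factorisation $\Gamma^{1,m-1}(X,a)=p_0^*a\cdot\Gamma$ by $\pi$ and multiplying by $p_{0m}^{*}\Delta_X-p_m^*a$, the definition (\ref{formulaforgamma1m}) yields
\[
\Gamma^{1,m}(X,a)=\pi^{*}\Gamma\cdot p_0^*a\cdot\bigl(p_{0m}^{*}\Delta_X-p_m^{*}a\bigr).
\]
The key computation is that, under assumption (\ref{eqcyclea}), the last factor vanishes once multiplied by $p_0^*a$. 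Indeed, by the standard formula $q_1^*a\cdot\Delta_X=\Delta_{*}a=q_2^*a\cdot\Delta_X$ in $X\times X$, pulling back by $p_{0m}$ gives $p_0^*a\cdot p_{0m}^{*}\Delta_X=p_{0m}^{*}(\Delta_{*}a)$, while (\ref{eqcyclea}) pulled back by $p_{0m}$ gives $p_0^*a\cdot p_m^{*}a=p_{0m}^{*}(\Delta_{*}a)$; thus
\[
p_0^*a\cdot\bigl(p_{0m}^{*}\Delta_X-p_m^{*}a\bigr)=0\quad\text{in}\quad{\rm CH}(X^{m+1})/R.
\]
Therefore $\Gamma^{1,m}(X,a)=0$ in ${\rm CH}_n(X^{m+1})/R$, and Proposition \ref{propeq} (which works modulo $R$ by the same argument) gives $\Gamma^m(X,a)=0$ in ${\rm CH}_n(X^m)/R$.

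The main conceptual obstacle is recognising that the hypothesis $\Gamma^{1,m-1}(X,a)=p_0^*a\cdot\Gamma$ is exactly enough to let us ``absorb'' one of the remaining diagonal-minus-point factors in $\Gamma^{1,m}$ via the identity $p_0^*a\cdot p_{0m}^{*}\Delta_X=p_0^*a\cdot p_m^{*}a$, which is precisely what (\ref{eqcyclea}) encodes. Once that observation is in hand, the rest of the proof is a one-line manipulation together with Proposition \ref{propeq}.
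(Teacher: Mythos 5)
Your proof is correct. The ``only if'' direction is identical to the paper's (a direct application of (\ref{eqformrec})). For the ``if'' direction, however, you take a genuinely different route. The paper stays on $X^m$: it invokes Lemma \ref{lepourprop22avril} (whose proof is where (\ref{eqcyclea}) is actually used) to replace the arbitrary cycle $\Gamma$ by $p_{1,\ldots,m-1}^*\bigl(p_{1,\ldots,m-1*}(p_0^*a\cdot\Gamma)\bigr)$, identifies this pushforward with $\Gamma^{m-1}(X,a)$ via (\ref{eqformproj}), and then concludes by (\ref{eqformrec}). You instead pass to $X^{m+1}$, multiply the hypothesis by the remaining factor $p_{0m}^*\Delta_X-p_m^*a$ of $\Gamma^{1,m}(X,a)$ (i.e.\ formula (\ref{eqkk'}) with $k'=1$), and kill it outright with the identity $p_0^*a\cdot(p_{0m}^{*}\Delta_X-p_m^{*}a)=0$, which follows from the unconditional equality $p_1^*a\cdot\Delta_X=\Delta_*a$ together with (\ref{eqcyclea}); you then descend by the pushforward half of Proposition \ref{propeq}. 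Both arguments rest on the same algebraic content of (\ref{eqcyclea}) --- indeed the proof of Lemma \ref{lepourprop22avril} uses precisely the relation $p_1^*a\cdot\Delta_X=p_1^*a\cdot p_2^*a$ that you exploit --- but your version bypasses that lemma and the identification of $\Gamma$ with $\Gamma^{m-1}(X,a)$ altogether, at the mild cost of working one factor higher. One small point worth making explicit: you only need the implication $\Gamma^{1,m}(X,a)=0\Rightarrow\Gamma^m(X,a)=0$, which is just pushforward along $p'_{1,\ldots,m}$ via (\ref{eqformproj}) and holds modulo any adequate $R$; the converse implication in Proposition \ref{propeq} relies on O'Grady's Proposition 2.4, which the paper only explicitly extends to algebraic equivalence, so your appeal to Proposition \ref{propeq} should be phrased so as to use only this easy direction.
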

The proof of Proposition \ref{propourdm1} will use the following
\begin{lemm}\label{lepourprop22avril} Assume the degree $1$ zero-cycle $a$ of $X$
satisfies (\ref{eqcyclea}). Then for any
$Y$ and any cycle $\Gamma\in {\rm CH}(X\times Y)/R$, the following formula holds:
$$p_X^*a\cdot \Gamma=p_X^*a\cdot p_Y^*\Gamma_a \,\,{\rm in}\,\,{\rm
CH}(X\times Y)/R,$$ where
$$\Gamma_a:=p_{Y*}(p_X^*a\cdot \Gamma)\in {\rm CH}(Y)/R.$$
\end{lemm}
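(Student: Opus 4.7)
The plan is to lift the identity from $X\times Y$ to the triple product $X\times X\times Y$, where hypothesis (\ref{eqcyclea}) can be exploited, and then push it back down via two different projections. Introduce the projections $p_1,p_2,p_3$ and $p_{ij}$ from $X\times X\times Y$, along with the partial diagonal embedding $\delta\colon X\times Y\hookrightarrow X\times X\times Y$, $(x,y)\mapsto(x,x,y)$. The crucial feature is that $\delta$ is a section of both $p_{13}$ and $p_{23}$, and that $p_{13},p_{23}$ exhibit $X\times X\times Y$ as the fibre product $(X\times Y)\times_Y(X\times Y)$.

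First, pull hypothesis (\ref{eqcyclea}) back via the flat map $p_{12}$. Since $\Delta\colon X\to X\times X$ fits into a cartesian square with $p_{12}$ whose top arrow is $\delta$ and whose bottom projection to $X$ is $p_X\colon X\times Y\to X$, base change gives $p_{12}^*\Delta_*a=\delta_*p_X^*a$, whence
$$p_1^*a\cdot p_2^*a=\delta_*(p_X^*a)\quad\text{in }{\rm CH}(X\times X\times Y)/R.$$
Multiplying both sides by $p_{23}^*\Gamma$ and using the projection formula for $\delta$ together with $p_{23}\circ\delta={\rm id}_{X\times Y}$ yields the key intermediate identity
$$p_1^*a\cdot p_2^*a\cdot p_{23}^*\Gamma=\delta_*(p_X^*a\cdot \Gamma).$$

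Second, apply $p_{13*}$ to both sides. The right-hand side becomes $p_X^*a\cdot\Gamma$ immediately, since $p_{13}\circ\delta={\rm id}_{X\times Y}$. On the left, write $p_1^*a=p_{13}^*(p_X^*a)$ and apply the projection formula to pull $p_X^*a$ outside $p_{13*}$, reducing the computation to $p_{13*}(p_2^*a\cdot p_{23}^*\Gamma)$. Since $p_2=p_X\circ p_{23}$, this equals $p_{13*}p_{23}^*(p_X^*a\cdot\Gamma)$, which by base change for the cartesian square with vertices $X\times X\times Y,\,X\times Y,\,X\times Y,\,Y$ (and both horizontal/vertical arrows being $p_{23},p_{13}$ above $p_Y\colon X\times Y\to Y$) coincides with $p_Y^*p_{Y*}(p_X^*a\cdot\Gamma)=p_Y^*\Gamma_a$. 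Equating the two sides gives the desired identity.

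The only subtle point is the base change $p_{13*}p_{23}^*=p_Y^*p_{Y*}$, which rests on identifying the square as cartesian and applying the standard proper-flat compatibility; this is valid modulo any adequate equivalence relation $R$. Everything else is mechanical use of the projection formula for the regular embedding $\delta$, and a single invocation of hypothesis (\ref{eqcyclea}) at the very beginning.
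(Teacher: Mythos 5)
Your proof is correct and follows essentially the same route as the paper's: both arguments lift the problem to $X\times X\times Y$, invoke hypothesis (\ref{eqcyclea}) exactly once to trade the diagonal factor $p_{12}^*\Delta_*a$ for $p_1^*a\cdot p_2^*a$, and then descend via the projection formula and flat base change along the two projections to $X\times Y$. The only difference is presentational: the paper first decomposes $a=\sum_i n_i a_i$ into points and phrases the computation through the restrictions $\Gamma_{a_i}$, whereas you work intrinsically with $\delta_*$ and the two cartesian squares, which is a slightly cleaner packaging of the same manipulation.
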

\begin{proof} Let $a=\sum_in_ia_i$, where $a_i\in X$. Then
$$p_X^*a\cdot \Gamma=\sum_in_ip_X^*a_i\cdot \Gamma=\sum_in_i a_i\times \Gamma_{a_i}=\sum_in_ip_X^*a_i\cdot
p_Y^*\Gamma_{a_i},$$ where $\Gamma_{a_i}\in {\rm CH}(Y)/R$ is the
restriction of $\Gamma$ to $a_i\times Y$. So we need to prove that,
assuming (\ref{eqcyclea}),
\begin{eqnarray}\label{eqpouraa22avril} \sum_in_ip_X^*a_i\cdot
p_Y^*\Gamma_{a_i}=p_X^*a\cdot p_Y^*\Gamma_a\,\,{\rm in}\,\,{\rm
CH}(X\times Y)/R,
\end{eqnarray}
where $\Gamma_a=\sum_in_i\Gamma_{a_i}\in {\rm CH}(Y)/R$. Note that
(\ref{eqcyclea}) is exactly the case of (\ref{eqpouraa22avril})
where $X=Y$ and $\Gamma$ is the diagonal of $X$. The general  case
is then deduced from this one by introducing $X\times X\times Y$
with its various projections to $X, \,Y$ and $X\times Y$. Namely,
let \begin{itemize} \item $p_{i,X,Y}:X\times X\times Y\rightarrow
X\times Y$, $i=1,\,2$, be the projections onto  the product of the
first and the third (resp. the second and the third) factor,
\item
$p_{i,X}:X\times X\times Y\rightarrow X,\,i=1,\,2$ be the projection
on the
 first, resp. second, factor
and \item $p_{X,X}:X\times X\times Y\rightarrow X\times Y\rightarrow
X\times X$ be the projection onto  the product of the first two
factors. \end{itemize}
The left hand side of (\ref{eqpouraa22avril})
is clearly equal to
$$p_{2,X,Y*}(p_{1,X}^*a\cdot p_{X,X}^*\Delta_X\cdot p_{2,X,Y}^*\Gamma).$$
Formula (\ref{eqcyclea}) tells us that
on $X\times X$, $p_1^*a\cdot\Delta_X=p_1^*a\cdot p_2^*a$ modulo $R$,
so that
\begin{eqnarray}\label{autreeqpouraa22avril}p_{2,X,Y*}(p_{1,X}^*a\cdot p_{X,X}^*\Delta_X\cdot
 p_{2,X,Y}^*\Gamma)=
p_{2,X,Y*}(p_{1,X}^*a\cdot p_{2,X}^*a\cdot p_{2,X,Y}^*\Gamma)
\,\,{\rm in}\,\,{\rm CH}(X\times Y)/R.
\end{eqnarray}
As $a$ has degree $1$, the right hand side of
(\ref{autreeqpouraa22avril}) is equal by the projection formula to
$$p_X^*a\cdot p_Y^*\Gamma_a,$$
proving (\ref{eqpouraa22avril}).

\end{proof}
\begin{proof}[Proof of Proposition \ref{propourdm1}] We have by (\ref{eqformrec})
$$\Gamma^m(X,a)=\Gamma^{1,m-1}(X,a)-p_0^*a\cdot p_{1,\ldots,m-1}^*\Gamma^{m-1}(X,a)$$
so if $\Gamma^m(X,a)=0$ in ${\rm CH}(X^m)/R$, we get
$$\Gamma^{1,m-1}(X,a)=p_0^*a\cdot p_{1,\ldots,m-1}^*\Gamma^{m-1}(X,a)\,\,{\rm in }\,\,{\rm CH}(X^m)/R.$$
This proves one direction (for which we do not need
(\ref{eqcyclea})). In the other direction, we assume
(\ref{eqcyclea}) and
\begin{eqnarray}\label{autreeqpouraa22avril111}
\Gamma^{1,m-1}(X,a)=p_0^*a\cdot \Gamma\,\,{\rm in}\,\,{\rm
CH}_n(X^m)/R
\end{eqnarray}
 for some cycle $\Gamma\in {\rm CH}_{2n}(X^m)/R$.
 We now use Lemma \ref{lepourprop22avril} which gives
$$p_0^*a\cdot \Gamma=p_0^*a\cdot p_{1,\ldots,m-1}^*(p_{1,\ldots,m-1*}(p_0^*a\cdot \Gamma)).$$
By (\ref{autreeqpouraa22avril111}), this gives
$$p_0^*a\cdot \Gamma=p_0^*a\cdot p_{1,\ldots,m-1}^*(p_{1,\ldots,m-1*}(\Gamma^{1,m-1}(X,a)))
\,\,{\rm in}\,\,{\rm CH}_n(X^m)/R.$$
As $p_{1,\ldots,m-1*}(\Gamma^{1,m-1}(X,a))=\Gamma^{m-1}(X,a)$ by
(\ref{eqformproj}), we get
$$\Gamma^{1,m-1}(X,a)=p_0^*a\cdot p_{1,\ldots,m-1}^*(\Gamma^{m-1}(X,a))\,\,{\rm in}\,\,{\rm CH}_n(X^m)/R.$$
Using (\ref{eqformrec}), we conclude that
$$\Gamma^m(X,a)=\Gamma^{1,m-1}(X,a)-p_0^*a\cdot
p_{1,\ldots,m-1}^*(\Gamma^{m-1}(X,a))=0\,\,{\rm in}\,\,{\rm
CH}_n(X^m)/R.$$
\end{proof}

\section{Proof of Theorem \ref{theointro1preciseintro} \label{sec2}}
We prove in this section  Theorem \ref{theointro1preciseintro}, that
is the following statement :
\begin{theo}\label{theo1precise} Let $X$ be a variety of dimension $n$ and let $a\in {\rm CH}_0(X)$
be of degree $1$.
If $X $ is swept-out by irreducible curves  of genus $\leq g$
supporting a $0$-cycle rationally equivalent to $a$, and $m\geq
(n+1)(g+1)$, then $\Gamma^m(X,a)=0 $.
\end{theo}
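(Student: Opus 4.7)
The plan is to use Proposition \ref{propeq} to reduce the problem to showing
\[ \Gamma^{1,m}(X,a) = \prod_{i=1}^m p_{0i}^*\gamma_X = 0 \]
in ${\rm CH}^{mn}(X^{m+1})$, where $\gamma_X := \Delta_X - X\times a$. This reformulation is convenient because each factor is a codimension-$n$ cycle on $X^{m+1}$ whose restriction along curves in a sweeping family can be computed explicitly.

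First I would fix a sweeping family realizing the hypothesis: a smooth quasi-projective base $B$ of dimension $n-1$, a smooth family $\pi:\mathcal{C}\to B$ of curves of genus $\leq g$, a dominant generically finite evaluation $\tau:\mathcal{C}\to X$, and a relative $0$-cycle $\alpha \in {\rm CH}_1(\mathcal{C})$ of fiberwise degree $1$ such that $\tau_*\alpha_b$ is rationally equivalent to $a$ in ${\rm CH}_0(X)$ for every $b\in B$. For a fiber $C = C_b$ with inclusion $j_C:C\hookrightarrow X$, a direct computation gives the key local identity
\[ (j_C\times{\rm id}_X)^*\gamma_X = ({\rm id}_C\times j_C)_*\gamma_{C,\alpha}\quad\text{in } {\rm CH}_1(C\times X), \]
where $\gamma_{C,\alpha} := \Delta_C - C\times \alpha$. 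Globalising over $B$, this lets one ``replace'' each factor $p_{0i}^*\gamma_X$ in $\Gamma^{1,m}(X,a)$ by a class computed on the relative product $\mathcal{C}\times_B\mathcal{C}$, modulo a correction term proportional to $p_0^*a$ that comes from the rational equivalence $\tau_*\alpha_b\sim a$.

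The heart of the proof is the iterative application of this replacement to all $m$ factors, combined with a curve-level vanishing. I expect the bound $m\geq(n+1)(g+1)$ to split as $n+1$ packets of $g+1$ factors: each packet of $g+1$ factors can be handled on a single curve $C_b$, because a divisor of degree $g+1$ on a genus-$g$ curve is non-special by Riemann--Roch and moves in a linear system of positive dimension, yielding a linear relation among products of $g+1$ divisors of type $\gamma_{C,\alpha}$; the factor $n+1$ accounts for the $n-1$ dimensions of $B$ together with the two residual ``projective directions'' of $X^{m+1}$ on which the iterated replacement has not yet acted.

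The main technical obstacle will be the bookkeeping of the cascade of correction terms. Each replacement step introduces a residual which is a multiple of $p_0^*a$. Proposition \ref{propourdm1}, which characterises the vanishing $\Gamma^m(X,a)=0$ precisely as the existence of a cycle $\Gamma$ with $\Gamma^{1,m-1}(X,a) = p_0^*a\cdot\Gamma$, is the natural tool for absorbing these residuals into a statement in smaller $m$, so that an induction can be closed (either on $m$, or, using the sweeping to cut down one dimension at a time, on $n$, with the curve case $n=1$ as base). Arranging the combinatorics so that every residual lies in the scope of the inductive hypothesis is where the real work lies.
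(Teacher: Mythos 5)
Your reduction to $\Gamma^{1,m}(X,a)=\prod_{i=1}^m p_{0i}^*(\Delta_X-p_i^*a)$ via Proposition \ref{propeq}, and the idea of splitting the $m$ factors into $n+1$ packets of $g+1$, is exactly the skeleton of the paper's proof (via the product formula (\ref{eqkk'})). But the two steps that make this skeleton work are missing or misstated in your proposal. First, the curve-level input is not ``a linear relation among products of $g+1$ divisors of type $\gamma_{C,\alpha}$'': the cycle $\Gamma^{1,g+1}(C,\alpha)$ does \emph{not} vanish on $C^{g+2}$ in general (already for an elliptic curve $\Gamma^{2}(E,a)\neq 0$, even cohomologically, and by Proposition \ref{propeq} this is equivalent to $\Gamma^{1,2}(E,a)\neq 0$). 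The correct input is Lemma \ref{levoe}: for a degree-$0$ cycle $z$ on a genus-$g$ curve, $z^{*k}=0$ in ${\rm CH}_0(C^k)$ for $k>g$. One applies it to $z=x'-a'$ and pushes forward to $X^{g+1}$ to conclude that the \emph{restriction} of $\Gamma^{1,g+1}(X,a)$, viewed as a correspondence from the $0$-th factor $X$ to $X^{g+1}$, to each fibre $x\times X^{g+1}$ equals $(x-a)^{*(g+1)}=0$. This fibrewise statement is strictly weaker than what you assert, and it is all the curves give you.

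Second, to convert the fibrewise vanishing into a statement about the cycle itself one needs the Bloch--Srinivas localisation principle: $\Gamma^{1,g+1}(X,a)$ is rationally equivalent to a cycle supported on $D\times X^{g+1}$ for some proper closed $D\subsetneqq X$. Your proposed substitute --- an iterated ``replacement'' producing a ``cascade of correction terms'' to be absorbed via Proposition \ref{propourdm1} --- is precisely the part you leave undone, and it is not needed. Once the support statement is available, the factor $n+1$ has nothing to do with ``$n-1$ dimensions of $B$ plus two residual projective directions'': it is simply that the $n+1$ packets give $n+1$ supports $D_1,\dots,D_{n+1}\subsetneqq X$ which, by the easy moving Lemma \ref{lemoving}, may be chosen with $\bigcap_i D_i=\emptyset$; since the packets share only the $0$-th factor, their product is supported over $\bigcap_i D_i$ via $p_0$ and hence vanishes. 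As written, your argument rests on a false intermediate claim at the curve level and defers the essential difficulty, so it does not constitute a proof.
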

Note that for $g=0$, we get the following corollary:
\begin{coro} \label{coroRC} Let $X$ be a rationally connected manifold of dimension $n$.
Then $\Gamma^{n+1}(X,o)=0$ for any point $o\in X$.
\end{coro}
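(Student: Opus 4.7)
The plan is to obtain Corollary \ref{coroRC} as a direct specialization of Theorem \ref{theo1precise} to the case $g=0$, with $a=o$. Thus the only work is to verify that a rationally connected manifold satisfies the sweeping hypothesis of the theorem with rational curves through $o$, each supporting a zero-cycle rationally equivalent to $o$.

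First I would recall that rational connectedness of a smooth projective variety $X$ of dimension $n$ implies (in fact is equivalent to) the existence, through any point and for any finite set of points, of an irreducible rational curve passing through them. In particular, for the fixed point $o\in X$ and for every $x\in X$, there exists an irreducible (possibly singular) rational curve $C_x\subset X$ containing both $o$ and $x$. The family of such curves $\{C_x\}_{x\in X}$ then sweeps out $X$, so $X$ is covered by irreducible curves of geometric genus $0$.

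Next I would check the zero-cycle condition. Each curve $C=C_x$ in the sweeping family has normalization $\widetilde{C}\cong\mathbb{P}^1$, on which any two points are rationally equivalent. Therefore the class of $o$, viewed as a point of $C$, agrees in ${\rm CH}_0(C)$ with the class of any degree $1$ zero-cycle on $C$; pushing forward to $X$, the curve $C$ supports a zero-cycle rationally equivalent to $o=a$ in ${\rm CH}_0(X)$. Thus the hypothesis of Theorem \ref{theo1precise} holds with $g=0$.

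Finally I would apply Theorem \ref{theo1precise}: the bound $m\geq (n+1)(g+1)$ becomes $m\geq n+1$, and the theorem yields $\Gamma^{n+1}(X,o)=0$ in ${\rm CH}^{n^2}(X^{n+1})$, as claimed. There is no real obstacle here beyond unpacking the definition of rational connectedness; the substantive content lies in Theorem \ref{theo1precise}, which has been proved in the body of the section.
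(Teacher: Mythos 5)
Your proposal is correct and is exactly the paper's route: the corollary is obtained as the case $g=0$ of Theorem \ref{theo1precise}, the only verification being that rational connectedness provides, through $o$ and any $x\in X$, an irreducible rational curve whose normalization $\mathbb{P}^1$ carries a degree $1$ zero-cycle pushing forward to $o$. The bound $m\geq (n+1)(g+1)=n+1$ then gives $\Gamma^{n+1}(X,o)=0$, as in the paper.
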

This corollary will be improved at the end of this section in
Theorem \ref{theovariantRC}.
\begin{proof}[Proof of Theorem \ref{theo1precise}] By Proposition
\ref{propeq}, it suffices to prove the vanishing of
$\Gamma^{1,m}(X,a)$. Let us see $\Gamma^{1,m}(X,a)$ as a
correspondence between $X$ and $X^m$. Then for any $x\in X$, we have
$$ \Gamma^{1,m}(X,a)_{\mid x\times X^m}=(x-a)^{*m}\,\,{\rm
in}\,\,{\rm CH}_0(X^{m})_\mathbb{Q}.$$

Recall now the following result proved in \cite{voe}, \cite{voivoe}:
\begin{lemm} \label{levoe} Let $C$ be a smooth connected curve of genus $g$, and let
$z\in {\rm CH}_0(C)_\mathbb{Q}$ be a $0$-cycle of degree $0$ on $C$.
Then for $k>g$, $z^{*k}=0$ in ${\rm CH}_0(C^k)_\mathbb{Q}$.

\end{lemm}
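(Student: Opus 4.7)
The approach is to transfer the vanishing to the Jacobian $J(C)$, translate the external product into the Pontryagin product, and exploit Beauville's weight decomposition on ${\rm CH}_0(J(C))_{\mathbb{Q}}$. As a first remark, a non-sharp version is free from Voevodsky's smash-nilpotence theorem \cite{voe}: since ${\rm CH}_0(C)/{\rm alg}=\mathbb{Z}$, any degree-zero cycle $z$ is algebraically equivalent to zero and hence smash-nilpotent, so $z^{*N}=0$ for some $N$. The real work is to obtain the sharp bound $N=g+1$.

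For the reduction to $J(C)$, I first observe that $z^{*k}$ is $S_k$-symmetric, so with $\mathbb{Q}$-coefficients its vanishing in ${\rm CH}_0(C^k)_{\mathbb{Q}}$ is equivalent to the vanishing of its symmetrization $\widetilde z^{(k)}$ in ${\rm CH}_0(C^{(k)})_{\mathbb{Q}}$. Fix a base point $p_0\in C$, let $\iota:C\hookrightarrow J(C)$ be the Abel-Jacobi embedding, and consider the Abel-Jacobi map $\pi_k:C^{(k)}\to J(C)$, $D\mapsto[D-kp_0]$, together with the sum map $\alpha_k:C^k\to J(C)$. Under pushforward the external product becomes the Pontryagin product:
\[
\alpha_{k*}(z^{*k})=(\iota_*z)^{*_Jk}\quad\text{in}\quad {\rm CH}_0(J(C))_{\mathbb{Q}},
\]
where $*_J$ denotes the Pontryagin product coming from the group law on $J(C)$.

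Next I apply the Beauville decomposition ${\rm CH}_*(J(C))_{\mathbb{Q}}=\bigoplus_s {\rm CH}_*(J)_{(s)}$, with respect to which the Pontryagin product is bigraded and weights add. The degree-zero condition on $z$ forces $\iota_*z\in\bigoplus_{s\geq 1}{\rm CH}_0(J)_{(s)}$, so $(\iota_*z)^{*_Jk}\in\bigoplus_{s\geq k}{\rm CH}_0(J)_{(s)}$. Beauville's theorem \cite{beau} gives the decomposition of the curve class $[C]\in{\rm CH}_1(J)_{\mathbb{Q}}$ with weights only in $\{0,\dots,g-1\}$; a formal consequence is that the Pontryagin subring of ${\rm CH}_0(J(C))_{\mathbb{Q}}$ generated by $\iota_*{\rm CH}_0(C)$ has no components of weight $>g$. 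Hence $(\iota_*z)^{*_Jk}=0$ for $k>g$. One then lifts the vanishing from $J(C)$ back to $C^{(k)}$ and $C^k$: for $k\geq 2g-1$ the map $\pi_k$ is a $\mathbb{P}^{k-g}$-bundle and $\pi_{k*}$ is an isomorphism on ${\rm CH}_0$ with $\mathbb{Q}$-coefficients, so the vanishing transfers directly; for the intermediate range $g<k<2g-1$ one uses that the nonempty fibers of $\pi_k$ are still positive-dimensional projective spaces $|D|$ and combines a fiberwise moving argument with the Beauville filtration pulled back via $\pi_k^*$.

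The main obstacle is precisely this last lifting step in the intermediate range $g<k\leq 2g-2$, where $\pi_k$ fails to be a projective bundle because of jumping fiber dimensions over the Brill--Noether locus of special divisors. The easy projective-bundle argument only applies for $k\geq 2g-1$, and reaching the sharp bound $k>g$ requires a careful geometric analysis of the Abel-Jacobi fibration over this locus, which is presumably what the combined methods of \cite{voe} and \cite{voivoe} carry out.
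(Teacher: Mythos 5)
The paper offers no proof of this lemma --- it is imported from \cite{voe} and \cite{voivoe} --- so your argument has to stand on its own. Its skeleton (symmetrize; push $z^{*k}$ forward to $J(C)$, where it becomes the Pontryagin power $(\iota_*z)^{*_Jk}$, which dies for $k>g$ because the Beauville decomposition of ${\rm CH}_0(J)_{\mathbb{Q}}$ has no weights above $g={\rm dim}\,J$; lift back) is sound, and the symmetrization is genuinely needed: for an elliptic curve $E$ and two \emph{distinct} degree-$0$ cycles $w_1,w_2$, the class $w_1\times w_2$ is in general nonzero in ${\rm CH}_0(E\times E)_{\mathbb{Q}}$ (the Albanese kernel of $E\times E$ is nonzero by Mumford), so only the $\mathfrak{S}_k$-invariant class $z^{*k}$ can vanish, and it must already vanish on $C^{(k)}$. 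The genuine gap is the one you flag yourself: the descent of the vanishing from ${\rm CH}_0(J(C))_{\mathbb{Q}}$ to ${\rm CH}_0(C^{(k)})_{\mathbb{Q}}$ is only established for $k\geq 2g-1$, where $\pi_k$ is an honest projective bundle, while the range $g<k\leq 2g-2$ (nonempty as soon as $g\geq 2$) is left to an unexecuted ``fiberwise moving argument''. Since Theorem \ref{theo1precise} uses precisely the sharp exponent $g+1$, this range cannot be discarded, and as written the proposal does not prove the lemma.

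The gap closes without any Brill--Noether analysis. For every $k\geq g$, over the dense open subset of ${\rm Pic}^k(C)$ where $h^1=0$, the map $\pi_k$ is the projectivization of a rank $k-g+1$ vector bundle, hence Zariski-locally trivial; therefore $C^{(k)}$ is birational, over $J(C)$, to $J(C)\times\mathbb{P}^{k-g}$. As ${\rm CH}_0$ of smooth projective varieties is a birational invariant, and the resulting identification commutes with proper pushforward to $J(C)$ (both maps agree on $0$-cycles supported in the common open set, and such cycles generate ${\rm CH}_0$), the map $\pi_{k*}:{\rm CH}_0(C^{(k)})_{\mathbb{Q}}\rightarrow{\rm CH}_0(J(C))_{\mathbb{Q}}$ is an isomorphism for every $k\geq g$, not only for $k\geq 2g-1$, and the vanishing of $(\iota_*z)^{*_Jk}$ for $k>g$ lifts at once. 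This is exactly the device the paper itself uses in the proof of Proposition \ref{propsansnom12mai}, where $X^{(g)}$ is traded for $J(X)$ by birational invariance. Two lesser points: the appeal to the Beauville decomposition of the curve class $[C]\in{\rm CH}_1(J)_{\mathbb{Q}}$, and the assertion that it ``formally'' bounds the weights of $0$-cycles supported on $C$, is a non sequitur, but also superfluous, since ${\rm CH}_0(J)_{(s)}=0$ for $s>g$ holds for any $g$-dimensional abelian variety and already gives $I^{*(g+1)}=0$ for the augmentation ideal $I$; and the opening smash-nilpotence remark yields no effective exponent, so it contributes nothing to the statement being proved.
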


Our assumption is now that $X$ is swept out by  irreducible curves
of genus $\leq g$ supporting a $0$-cycle rationally equivalent to
$a$. This means that for any $x\in X$, there is a smooth connected
curve $C_x$ of genus $\leq g$ mapping to $X$ via a morphism $f_x$, a
point $x'\in C_x$ such that $f_x(x')=x$ and a $0$-cycle $a'\in {\rm
CH}_0(C_x)_\mathbb{Q}$ of degree $1$, such that $f_{x*}(a')=a$ in
${\rm CH}_0(X)_\mathbb{Q}$. It is then clear that
$${f_x^k}_*((x'-a')^{*k})=(x-a)^{*k}\,\,{\rm in}\,\,{\rm
CH}_0(X^k)_\mathbb{Q}.$$ We thus conclude by Lemma \ref{levoe} that
for $k>g$, and for any $x\in X$
\begin{eqnarray} \label{eq0k}
\Gamma^{1,k}(X,a)_{\mid x\times X^k}=(x-a)^{*k}=0\,\,{\rm
in}\,\,{\rm CH}_0(X^k)_\mathbb{Q}.
\end{eqnarray}

 We use now the following
general principle which is behind
 the  Bloch-Srinivas decomposition of the diagonal
\cite{blochsrinivas}, see  \cite[3.1]{voisinbook}:
\begin{theo} Let $\phi:W\rightarrow Y$ be a morphism, where $W$
is smooth of dimension $m$. Let $Z$ be a codimension $k$ cycle on
$W$. Assume that, for general $y\in Y$, the restriction $Z_{\mid
W_y}$ vanishes in ${\rm CH}^k(W_y)$. Then there is a dense Zariski
open set $U\subset Y$, such that $Z_U=0$ in ${\rm CH}^k(W_U)$.
Equivalently, there exist a nowhere dense closed algebraic
 subset $D\subsetneqq Y$ and a cycle $Z'\in {\rm
 CH}_{m-k}(W_D)_\mathbb{Q}$ such that
 $$Z=Z'\,\,{\rm in\,\,CH}^k(W)_\mathbb{Q}.$$
\end{theo}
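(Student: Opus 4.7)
The plan is to prove the theorem via the classical three-step Bloch--Srinivas argument: first pass from ``vanishing on general fibers'' to ``vanishing on the generic fiber'', then spread this vanishing to an entire dense open of $Y$, and finally use the localization exact sequence to recover the equivalent reformulation in terms of a cycle supported on $\phi^{-1}(D)$.

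First I would reduce to showing that $Z_{\mid W_\eta}=0$ in ${\rm CH}^k(W_\eta)_\mathbb{Q}$, where $\eta$ is the generic point of $Y$. Rational equivalences between cycles on the fibers of $\phi$ are parameterized by a countable union of relative Hilbert (or Chow) schemes of finite type over $Y$, so the locus of $y\in Y$ for which $Z_{\mid W_y}=0$ in ${\rm CH}^k(W_y)_\mathbb{Q}$ is a countable union of constructible subsets of $Y$. Working, as is standard here, over an uncountable algebraically closed ground field, the hypothesis that this locus is Zariski-dense forces one of its constructible pieces to contain a nonempty Zariski open in $Y$, hence to contain the generic point $\eta$, which gives $Z_{\mid W_\eta}=0$.

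Next, I would use the identification
$$
{\rm CH}^k(W_\eta)_\mathbb{Q}\;=\;\varinjlim_{U}\,{\rm CH}^k(W_U)_\mathbb{Q},
$$
where the filtered colimit is taken over all dense Zariski opens $U\subset Y$; this is a formal consequence of the compatibility of Chow groups with filtered colimits of schemes. The vanishing at the generic fiber therefore upgrades to the existence of some dense open $U\subset Y$ with $Z_U=0$ in ${\rm CH}^k(W_U)_\mathbb{Q}$, which is exactly the first conclusion of the theorem.

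For the equivalent reformulation, set $D:=Y\setminus U$ and $W_D:=\phi^{-1}(D)$, a proper closed subset of $W$. Applying the localization exact sequence for the closed embedding $W_D\hookrightarrow W$ with open complement $W_U$ yields
$$
{\rm CH}_{m-k}(W_D)_\mathbb{Q}\;\longrightarrow\;{\rm CH}_{m-k}(W)_\mathbb{Q}\;\longrightarrow\;{\rm CH}_{m-k}(W_U)_\mathbb{Q}\;\longrightarrow\;0,
$$
and since the class of $Z$ maps to zero in the right-hand term, it lifts to a cycle $Z'\in{\rm CH}_{m-k}(W_D)_\mathbb{Q}$ satisfying $Z=Z'$ in ${\rm CH}^k(W)_\mathbb{Q}$ (using that $W$ is smooth to identify ${\rm CH}_{m-k}={\rm CH}^k$). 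The main obstacle is the first step: the passage from a dense set of good fibers to an open set of good fibers depends crucially on the uncountability of the base field and on the existence of a reasonable parameter space for rational equivalences; the remaining two steps are essentially formal.
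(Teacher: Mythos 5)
Your proof is correct and is precisely the standard Bloch--Srinivas spreading argument; the paper itself states this theorem without proof, citing \cite{blochsrinivas} and \cite[3.1]{voisinbook}, and your three steps (reduction to the generic fiber via countably many relative Hilbert/Chow schemes over an uncountable field, the identification ${\rm CH}^k(W_\eta)_\mathbb{Q}=\varinjlim_U {\rm CH}^k(W_U)_\mathbb{Q}$, and the localization exact sequence) are exactly the argument given in those references. No gaps.
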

(Here we use the notation $W_D:=\phi^{-1}(D),\,W_U:=\phi^{-1}(U)$.)
Applying this statement to $Y=X,\,W=X^{k+1},\,\phi$ the projection
to the first factor and $Z=\Gamma^{1,k}(X,a)$, we conclude from
(\ref{eq0k}) that under the assumptions of Theorem
\ref{theo1precise}, there exists for $k>g$ a proper closed algebraic
subset $D\subsetneqq X$, such that $\Gamma^{1,k}(X,a)$ is rationally
equivalent to a cycle supported on $D\times X^k$.

Recall now the formula (\ref{formulaforgamma1m}) defining
$\Gamma^{1,k}$:
$$\Gamma^{1,k}(X,a):=\prod_{1\leq i\leq
k}(p_{0i}^*\Delta_X-p_i^*a).
$$
It follows immediately that
\begin{eqnarray}
\label{eqkk'} \Gamma^{1,k+k'}(X,a)=p_{0,1\leq i\leq
k}^*\Gamma^{1,k}(X,a)\cdot p_{0,k+1\leq i\leq
k+k'}^*\Gamma^{1,k'}(X,a), \end{eqnarray}
 where $$p_{0,1\leq i\leq
k}:X^{k+k'+1}\rightarrow X^{k+1}$$ is the projection on the product
of the $k+1$ first factors and
$$p_{0,k+1\leq i\leq
k+k'}:X^{k+k'+1}\rightarrow X^{k'+1}$$ is the projection on the
product of the first factor (indexed by $0$) and the last $k'$
factors.

For $m\geq (n+1)(g+1)$, we write $m=(n+1)(g+1)+r$, for some $r\geq
0$ and we get from (\ref{eqkk'}):
$$\Gamma^{1,m}(X,a)=
p_{0,1\leq i\leq g+1}^*(\Gamma^{1,g+1})\cdot p_{0, g+2\leq i\leq
2(g+1)}^*(\Gamma^{1,g+1})$$ $$\ldots p_{0,n(g+1)+1\leq i\leq
(n+1)(g+1)}^*(\Gamma^{1,g+1})\cdot p_{0,(n+1)(g+1)+1\leq i\leq
(n+1)(g+1)+r}^*(\Gamma^{1,r}) .$$
 Now we proved that the cycle $\Gamma^{1,g+1}$ is supported (via
the first projection $X^{g+2}\rightarrow X$) over a proper algebraic
subset $D\varsubsetneqq X$, and by the easy moving Lemma
\ref{lemoving} below, we can choose closed algebraic subsets
$D_1,\ldots, D_{n+1}$ such that $\cap_iD_i=\emptyset$ and
$\Gamma^{1,g+1}$ is supported (via the first projection
$X^{g+2}\rightarrow X$) over the proper algebraic subset
$D_i\varsubsetneqq X$ for each $i$.

Then we conclude that for $m\geq (n+1)(g+1)$, $\Gamma^{1,m}(X,a)$ is
supported (via the first projection $X^{(n+1)(g+1)+r+1}\rightarrow
X$) over the proper algebraic subset $\cap_iD_i=\emptyset$, and thus
is equal to $0$.
\end{proof}
\begin{lemm}\label{lemoving} Let $Y$ be irreducible and let $Z$ be a cycle on a product
$Y\times W$. Assume there exists a proper closed algebraic subset
$D\varsubsetneqq Y$ such that $Z$ is rationally equivalent to a
cycle $Z'$ supported on $D\times W$. Then for any finite set of
points $y_1,\ldots,y_l\in Y$, there is a $D'\varsubsetneqq Y$ such
that none of the $y_j$'s belongs to $D'$ and $Z$ is rationally
equivalent to a cycle $Z''$ supported on $D'\times W$.
\end{lemm}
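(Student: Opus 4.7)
The plan is to prove Lemma \ref{lemoving} by a Chow-type moving argument on $Y\times W$, exploiting the use of $\mathbb{Q}$-coefficients. First reduce to the case where $Y$ is smooth quasi-projective, which suffices for the application. Decompose $D=D_{\mathrm{good}}\cup D_{\mathrm{bad}}$, with $D_{\mathrm{good}}$ the union of irreducible components missing every $y_j$ and $D_{\mathrm{bad}}$ the union of the remaining components. The portion of $Z'$ supported on $D_{\mathrm{good}}\times W$ requires no modification; by linearity, it suffices to move a single piece $Z'_0$ supported on an irreducible component $D_0\subsetneq Y$ passing through some $y_j$.

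For this, apply Chow's moving lemma on $Y$: since $D_0$ has positive codimension in the smooth variety $Y$, one obtains a rational equivalence $[D_0]\sim\sum_i n_i[D'_i]$ in $\mathrm{CH}(Y)_{\mathbb{Q}}$ with each $D'_i\subsetneq Y$ avoiding every $y_j$, realized by a cycle $T\subset Y\times\mathbb{P}^1$ flat over $\mathbb{P}^1$ with $T_0=D_0$ and $T_\infty=\sum_i n_iD'_i$. I would then upgrade this to $Y\times W$ by flatly extending $Z'_0$ across the family: construct a cycle $\widetilde Z'\subset T\times W\subset Y\times\mathbb{P}^1\times W$, flat over $\mathbb{P}^1$, with $\widetilde Z'_{|t=0}=Z'_0$, and set $Z''_0:=\widetilde Z'_{|t=\infty}$. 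The specialization principle (fibres of a flat family over $\mathbb{P}^1$ yield equal classes in the total space), combined with proper pushforward along $T\to Y$, yields $Z'_0\sim Z''_0$ in $\mathrm{CH}(Y\times W)_{\mathbb{Q}}$, while $Z''_0$ is supported on $(\bigcup_i D'_i)\times W$. Summing over the bad components and adjoining the good part gives the desired $D'$ and $Z''$.

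The main obstacle is producing the flat extension $\widetilde Z'$: since $Z'_0$ need not be a product cycle of the form $D_0\times z$, extending it across the family $T\to\mathbb{P}^1$ is not automatic. This is handled either by choosing $T$ sufficiently generically (for instance as a pencil in a very ample linear system containing $D_0$ and base-point-free at the $y_j$) so that the flat extension of $Z'_0$ can be written down directly, or by appealing to the relative Chow/Hilbert scheme of cycles on $T\times W\to\mathbb{P}^1$. Both routes are standard in moving-lemma arguments but account for most of the technical bookkeeping.
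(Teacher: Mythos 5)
You have correctly isolated the crux of the problem --- transporting the cycle $Z'_0$ along a rational equivalence of its support --- but your proposal does not actually overcome it, and the two routes you sketch do not work as stated. Moving $D_0$ to $\sum_i n_iD'_i$ via a cycle $T\subset Y\times\mathbb{P}^1$ gives a degenerating family of \emph{subvarieties} of $Y$, with no morphism from $T_0=D_0$ to the other fibres; an arbitrary cycle $Z'_0$ on $T_0\times W$ need not be the fibre at $0$ of any cycle on $T\times W$ flat (or without components in fibres) over $\mathbb{P}^1$. Choosing $T$ generically cannot help, because $Z'_0$ is arbitrary and fixed before $T$ is chosen; and the relative Chow or Hilbert scheme of $T\times W/\mathbb{P}^1$ carries no properness or dominance statement guaranteeing that the point $[Z'_0]$ lies on a component dominating $\mathbb{P}^1$. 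Even in the simplest case ($W$ a point, $Z'_0$ a single point $p\in T_0$), the best you can do is find a multisection of $T\to\mathbb{P}^1$ through $p$, whose fibre over $0$ is $p$ \emph{plus extra points of $T_0$}; the resulting rational equivalence expresses $p$ as a cycle on $T_\infty$ minus a cycle still supported on $D_0$, which defeats the purpose. So as written there is a genuine gap at the step you yourself flag as the main obstacle.

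The paper's proof sidesteps this entirely by moving a \emph{correspondence} rather than the support: let $\tau:\widetilde D\to D$ be a desingularization, lift $Z'$ to $\widetilde Z'$ on $\widetilde D\times W$, and consider the graph $\Gamma_{\tilde i}\subset\widetilde D\times Y$ of $\tilde i=i\circ\tau$. This graph has dimension $\dim D\leq n-1$ and codimension $n=\dim Y$ in $\widetilde D\times Y$, while the locus $\widetilde D\times\{y_1,\ldots,y_l\}$ to be avoided has dimension $\leq n-1$, so the expected dimension of their intersection is negative and the moving lemma in the smooth variety $\widetilde D\times Y$ produces $\Gamma'\sim\Gamma_{\tilde i}$ with $pr_2(\mathrm{Supp}\,\Gamma')$ missing all the $y_j$. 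One then sets $Z'':=(\Gamma',Id_W)_*(\widetilde Z')$, which equals $(\Gamma_{\tilde i},Id_W)_*(\widetilde Z')=Z$ in ${\rm CH}(Y\times W)$ and is supported on $pr_2(\mathrm{Supp}\,\Gamma')\times W$. The correspondence formalism performs exactly the transport that your flat extension was supposed to provide, with no deformation of $Z'_0$ required; this is the missing idea you would need to make your argument close.
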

\begin{proof}  Let $\tau:\widetilde{D}\rightarrow D$
 be a desingularization of $D\stackrel{i}{\hookrightarrow}Y$.
 The cycle $Z'$ of $D\times W$ with rational coefficients  lifts to a cycle
$\widetilde{Z}'$ of $\widetilde{D}\times W$. Let $\tilde{i}=i\circ
\tau:\widetilde{D}\rightarrow Y$ be the natural map and let
$\Gamma_{\tilde{i}}\subset \widetilde{D}\times Y$ be its graph.
Since $\Gamma_{\tilde{i}}$ has codimension $n={\rm dim}\,Y$, and
dimension $\leq n-1$, there is a cycle $\Gamma'\subset
\widetilde{D}\times Y$ rationally equivalent to $\Gamma_{\tilde{i}}$
and not intersecting $\widetilde{D}\times \{y_1,\ldots,y_l\}$. In
other words, $pr_2({\rm Supp}\,\Gamma')$ does not contain any of the
points $y_i$. We have by assumption
$$Z=({i},Id_W)_*Z'=(\tilde{i},Id_W)_*\widetilde{Z}'=(\Gamma_{\tilde{i}},Id_W)_*(\widetilde{Z}')$$
$$=(\Gamma',Id_W)_*(\widetilde{Z}')$$
in ${\rm CH}(Y\times W)$. Now, the cycle
$(\Gamma',Id_W)_*(\widetilde{Z}')$ is supported on $pr_2({\rm
Supp}\,\Gamma')\times W$, so the result is proved with $D'=pr_2({\rm
Supp}\,\Gamma')$, and $Z''=(\Gamma',Id_W)_*(\widetilde{Z}')$.

\end{proof}
To conclude this section, let us observe that the same scheme of proof proof applies to give the following
result, which is a generalization of Corollary \ref{coroRC}:
\begin{theo} \label{theovariantRC} Let $X$ be a connected smooth projective variety with ${\rm CH}_0(X)=\mathbb{Z}$.
Then for the canonical degree $1$
$0$-cycle $o$ on $X$, $\Gamma^{n+1}(X,o)=0$ in ${\rm CH}_n(X^{n+1})$, where $n={\rm dim}\,X$.
\end{theo}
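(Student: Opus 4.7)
The plan is to adapt the proof of Theorem \ref{theo1precise}, replacing the curve-sweeping input provided by Lemma \ref{levoe} by the tautology that the hypothesis ${\rm CH}_0(X) = \mathbb{Z}$ kills every $0$-cycle of degree zero on $X$, and in particular $x - o$ for every $x \in X$. By Proposition \ref{propeq} it suffices to prove $\Gamma^{1,n+1}(X,o) = 0$ in ${\rm CH}_n(X^{n+2})$. Iterating formula (\ref{eqkk'}), or directly from the definition (\ref{formulaforgamma1m}), one has
\begin{eqnarray*}
\Gamma^{1,n+1}(X,o) = \prod_{i=1}^{n+1} p_{0i}^{\,*}\bigl(\Gamma^{1,1}(X,o)\bigr)\in {\rm CH}_n(X^{n+2}),
\end{eqnarray*}
so the task reduces to understanding the single codimension $n$ cycle $\Gamma^{1,1}(X,o) = p_{01}^{\,*}\Delta_X - p_1^{\,*}o$ on $X \times X$.

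The next step is to view $\Gamma^{1,1}(X,o)$ as a correspondence from $X$ to $X$ via the first projection. Its fiber over any $x \in X$ is the degree zero $0$-cycle $x - o$, which vanishes in ${\rm CH}_0(X) = \mathbb{Z}$; in particular the fiber is zero generically. The Bloch--Srinivas type theorem invoked in the proof of Theorem \ref{theo1precise} then produces a proper closed subset $D \subsetneqq X$ such that $\Gamma^{1,1}(X,o)$ is rationally equivalent to a cycle supported on $D \times X$.

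I would then apply Lemma \ref{lemoving} inductively to produce $n+1$ proper closed subsets $D_1, \ldots, D_{n+1} \subsetneqq X$, each supporting a representative of the rational equivalence class of $\Gamma^{1,1}(X,o)$, and with $\bigcap_{i=1}^{n+1} D_i = \emptyset$. Given $D_1, \ldots, D_k$ with $\dim(\bigcap_{j \leq k} D_j) = d \geq 0$, selecting a finite set of points meeting every $d$-dimensional component of the partial intersection and applying Lemma \ref{lemoving} yields $D_{k+1}$ avoiding these points, strictly decreasing the dimension. Starting from $\dim D_1 \leq n-1$, at most $n$ further steps reduce the dimension below zero, so $n+1$ subsets are enough. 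Substituting into the product expression then shows that $\Gamma^{1,n+1}(X,o)$ is rationally equivalent to a cycle supported over $\bigcap_i D_i = \emptyset$ through the first-factor projection $X^{n+2} \to X$, and is therefore zero.

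The only subtlety I foresee lies in the moving step, where one must simultaneously preserve the rational equivalence class of each pullback and force the intersection of the $D_i$'s to collapse to the empty set in $n+1$ steps. This is exactly the same argument that concludes the proof of Theorem \ref{theo1precise}, so no new tool beyond those already developed in the excerpt is required.
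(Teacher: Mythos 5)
Your argument is correct and follows the paper's own proof essentially verbatim: reduce to $\Gamma^{1,n+1}(X,o)$ via Proposition \ref{propeq}, note that $\Gamma^{1,1}(X,o)=\Delta_X-X\times o$ has fiberwise trivial restriction so Bloch--Srinivas supports it over a proper closed subset, move to $n+1$ such subsets with empty intersection via Lemma \ref{lemoving}, and conclude from the product decomposition. The only (harmless) difference is that you spell out the dimension-decreasing induction behind getting $\cap_i D_i=\emptyset$, which the paper leaves implicit.
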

\begin{proof} Indeed, the Bloch-Srinivas decomposition of the diagonal \cite{blochsrinivas}
gives an equality
$$\Delta_X-X\times o=Z\,\,{\rm in}\,\,{\rm CH}_n(X\times X),$$
where $Z$ is supported over $D\times X$, for some divisor $D\subset X$.
By Lemma \ref{lemoving}, we can write such a decomposition with
$n+1$ divisors $D_1,\ldots,D_{n+1}$ such that $\cap_iD_i=\emptyset$.
We then conclude that $\Gamma^{1,n+1}(X,o)=\prod_{i=1}^{n+1}p_{0i}^*(\Delta_X-X\times o)$ is equal to $0$ in
${\rm CH}_n(X^{n+2})$, and  it follows from
Proposition \ref{propeq} that $\Gamma^{n+1}(X,o)=0$ in
${\rm CH}_n(X^{n+1})$.
\end{proof}
\section{Proof of Theorem \ref{theointro2}\label{sec2bis}}
We will first give the proof of Theorem \ref{theointro2}, (i). Let us recall the statement:

\begin{theo}\label{theogradyconj} Let $Y$ be smooth projective, and let $\pi:X\rightarrow Y$ be a
degree $2$ finite morphism, where $X$ is smooth projective. Let
$a\in {\rm CH}_0(Y)$ be a $0$-cycle of degree $1$ supported on the
branch locus of $\pi$. Then if $\Gamma^m(Y,a)=0$, we have
$\Gamma^{2m-1}(X,b)=0$, where $b=\frac{1}{2}\pi^*a\in {\rm
CH}_0(X)$.
\end{theo}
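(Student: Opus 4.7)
The strategy is to use Proposition~\ref{propourdm1} at both ends. The hypothesis $\Gamma^m(Y,a)=0$ translates into the existence of a cycle $\Gamma^Y\in\mathrm{CH}(Y^m)$ such that $\Gamma^{1,m-1}(Y,a)=p_0^*a\cdot\Gamma^Y$, and the conclusion $\Gamma^{2m-1}(X,b)=0$ will follow from producing a cycle $\Gamma^X\in\mathrm{CH}(X^{2m-1})$ with $\Gamma^{1,2m-2}(X,b)=p_0^*b\cdot\Gamma^X$. So my plan is to construct such a $\Gamma^X$ by pulling back the hypothesis via $\pi$ and absorbing the ``error terms'' into the $p_0^*b$-ideal.

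Let $\sigma\colon X\to X$ denote the covering involution. The two basic structural facts are $(\pi\times\pi)^*\Delta_Y=\Delta_X+\Gamma_\sigma$, where $\Gamma_\sigma$ is the graph of $\sigma$, and $\pi^*a=2b$ (since $a$ is supported on the branch locus, its preimage is a non-reduced multiple of $b$). I set
\[
v_i := p_{0i}^*(\Delta_X+\Gamma_\sigma) - 2p_i^*b,\qquad \delta_i := p_{0i}^*(\Delta_X-\Gamma_\sigma),
\]
so that $2\beta_i=v_i+\delta_i$, and so that $v_i=(\pi^N)^*(p_{0i}^Y\Delta_Y-p_i^Ya)$ is a pullback from $Y$. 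Pulling back the hypothesis via $\pi^m$ then yields $\prod_{i=1}^{m-1}v_i=2p_0^*b\cdot(\pi^m)^*\Gamma^Y$ in $\mathrm{CH}(X^m)$, and by further pullback to $X^{2m-1}$ one obtains $\prod_{i\in S}v_i=2p_0^*b\cdot\Xi_S$ for every $(m{-}1)$-subset $S\subset\{1,\ldots,2m-2\}$. By Proposition~\ref{propeq} applied to $Y$ and pulled back, $\prod_{i\in S}v_i=0$ whenever $|S|\geq m$. A direct computation using $\sigma^*b=b$ yields the key auxiliary identity $\delta_i\cdot p_0^*b=0$.

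With these pieces in hand, I would expand
\[
\prod_{i=1}^{2m-2}\beta_i \;=\; \frac{1}{2^{2m-2}}\sum_{T\subset\{1,\ldots,2m-2\}} \prod_{i\in T}v_i \cdot \prod_{j\in T^c}\delta_j
\]
and verify that every summand lies in the ideal generated by $p_0^*b$. Terms with $|T|\geq m$ vanish outright, and terms with $|T|=m-1$ are already of the form $p_0^*b\cdot(\text{cycle})$ (the remaining $\delta$-factors are harmless since $\delta_j\cdot p_0^*b=0$ handles any unwanted $p_0^*b$-absorption). The essential obstacle is the terms with $|T|\leq m-2$, for which neither $\prod_{i\in T}v_i$ vanishes nor is visibly a multiple of $p_0^*b$. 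To handle these, I expect to combine the relations $\prod_{i\in S}v_i=0$ and $\prod_{i\in S}v_i=2p_0^*b\cdot\Xi_S$ applied to various subsets $S$, together with the natural $(\mathbb{Z}/2)$-symmetries on $X^{2m-1}$ that exchange $\beta_j\leftrightarrow\gamma_j$ while fixing $p_0^*b$, to regroup the mixed monomials until each remaining piece is itself a multiple of $p_0^*b$. Assembling the resulting cofactors into a single cycle $\Gamma^X$ and invoking Proposition~\ref{propourdm1} then yields $\Gamma^{2m-1}(X,b)=0$.
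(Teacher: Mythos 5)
Your setup is essentially the paper's: the decomposition $2p_{0i}^*\Delta_X^b=v_i+\delta_i$ with $v_i$ a pullback from $Y$ and $\delta_i=p_{0i}^*(\Delta_X-\Gamma_\sigma)$ is exactly the identity $2\Delta_X=\Delta_X^++\Delta_X^-$ used there, and the two facts you do establish ($\prod_{i\in S}v_i=0$ for $|S|\geq m$, and $\delta_i\cdot p_0^*b=0$) both appear in the paper. But the step you defer --- the monomials with $|T|\leq m-2$, which you yourself call the essential obstacle --- is precisely the heart of the proof, and it is not resolved by symmetries or by recombining the relations you already have. What is needed is a further geometric identity, namely (\ref{eqegacruc}) of Lemma \ref{leutilepour23mars}:
\[
p_{12}^*\Delta_X^-\cdot p_{13}^*\Delta_X^-=p_{12}^*\Delta_X^+\cdot p_{23}^*\Delta_X^-,
\]
proved by expanding both sides in $\Delta_X$ and $\Gamma_\sigma$ and using relations such as $p_{12}^*\Gamma_\sigma\cdot p_{13}^*\Gamma_\sigma=p_{12}^*\Gamma_\sigma\cdot p_{23}^*\Delta_X$. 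Together with $\delta\cdot p_0^*b=0$ this gives $p_{0i}^*\Delta_X^-\cdot p_{0j}^*\Delta_X^-=p_{0i}^*(\pi_2^*\Delta_Y^a)\cdot p_{ij}^*\Delta_X^-$: every \emph{pair} of $\delta$'s sharing the index $0$ can be traded for one more factor of type $v$. This is what makes the count work: a monomial with $r$ factors $v$ and the rest $\delta$'s acquires at least $r+\llcorner(\#\delta)/2\lrcorner$ factors of type $v$, which clears the vanishing threshold in every case except $r=0$. Without this identity your expansion stalls exactly where you say it does.

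There is a second gap in the framing itself. Concluding via Proposition \ref{propourdm1} in the direction you need (from $\Gamma^{1,2m-2}(X,b)\in(p_0^*b)$ back to $\Gamma^{2m-1}(X,b)=0$) requires hypothesis (\ref{eqcyclea}) for $b$, i.e. $b*b=\Delta_*b$ in ${\rm CH}_0(X\times X)$ modulo \emph{rational} equivalence. In the theorem $a$ is an arbitrary degree-one cycle supported on the branch locus, so $b=\sum_jn_jx_j$ with $\sum_jn_j=1$, and $\sum_{j,k}n_jn_k(x_j,x_k)$ need not equal $\sum_jn_j(x_j,x_j)$ unless $b$ is a single point. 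The paper sidesteps this entirely: it works with $\Gamma^{1,2m-1}(X,b)$ (one more factor), shows that after all cancellations the only surviving monomial is $\prod_{i=1}^{2m-1}p_{0i}^*\Delta_X^-$ --- which is \emph{not} zero --- and then observes that this cycle is anti-invariant under the involution $(\sigma,{\rm Id},\ldots,{\rm Id})$ of $X^{2m}$ while the projection $p_{1,\ldots,2m-1}$ is invariant, so the pushforward $\Gamma^{2m-1}(X,b)$ equals its own negative and hence vanishes. To repair your argument you should either restrict $a$ to a single branch point (so that (\ref{eqcyclea}) holds) or replace your final appeal to Proposition \ref{propourdm1} by this parity argument.
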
\begin{rema}{\rm The assumption made on $a$ and $b$ is maybe not
optimal, but in any case the condition $b=\frac{1}{2}\pi^*a$ is not
sufficient. Indeed, consider the case where $Y$ is connected with
$\Gamma^m(Y,a)=0$, and $X$ consists of two copies of $Y$  with
$b=\frac{1}{2}\pi^*a\in {\rm CH}_0(X)$. Then $\Gamma^{k}(X,b)$ is
different from $0$ for any $k$ (in fact it is not even cohomologous
to $0$).}
\end{rema}
 We will
denote by $\pi_2=(\pi,\pi):X\times X\rightarrow Y\times Y$. Let
$i:X\rightarrow X$ be the involution of $X$ over $Y$ and
$\Gamma_i\subset X\times X$ be its graph. We then have
$$\pi_2^*(\Delta_Y)=\Delta_X+\Gamma_i.$$
Let
$$\Delta_X^+=\pi_2^*(\Delta_Y)=\Delta_X+\Gamma_i,\,\,\Delta_X^-=\Delta_X-\Gamma_i.$$
We thus have \begin{eqnarray} \label{eqpetiteform}
2\Delta_X=\Delta_X^++\Delta_X^-.
\end{eqnarray}

\begin{lemm} \label{leutilepour23mars} Under the assumptions of Theorem \ref{theogradyconj}, we have
the following
equalities in ${\rm CH}_n(X\times X\times X)$, $n:={\rm dim}\,X$.
\begin{eqnarray}\label{eqegacruc}
p_{12}^*\Delta_X^-\cdot p_{13}^*\Delta_X^-=p_{12}^*\Delta_X^+\cdot
p_{23}^*\Delta_X^-,
\\
p_2^*b\cdot p_{23}^*\Delta_X^-=0, \label{eqegacrucab}
\end{eqnarray}
hence
\begin{eqnarray}\label{uneeqpourd2} p_{12}^*\Delta_X^-\cdot p_{13}^*\Delta_X^-=
p_{12}^*(\Delta_X^+-2p_2^*b)\cdot
p_{23}^*\Delta_X^-.
\end{eqnarray}
\end{lemm}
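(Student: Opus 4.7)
My plan is to prove (1) and (2) directly and derive (3) as a formal consequence. For (3), distributing gives
$$p_{12}^*(\Delta_X^+ - 2 p_2^*b) \cdot p_{23}^*\Delta_X^- = p_{12}^*\Delta_X^+ \cdot p_{23}^*\Delta_X^- - 2 p_2^*b \cdot p_{23}^*\Delta_X^-;$$
the first summand equals $p_{12}^*\Delta_X^- \cdot p_{13}^*\Delta_X^-$ by (1), while the second vanishes by (2). So all of the content is in (1) and (2).

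For (1), I would expand both sides using $\Delta_X^\pm = \Delta_X \pm \Gamma_i$ and compute each of the four resulting intersection products on $X^3$. Each is the class of an embedded copy of $X$ of the form $\{(x, \sigma(x), \tau(x)) : x \in X\}$ for some $\sigma, \tau \in \{\mathrm{id}, i\}$, and a tangent-space computation at a point $(x_0, \sigma(x_0), \tau(x_0))$ shows that the two defining conditions meet transversally --- even when $x_0$ is a ramification point, because then $di_{x_0}$ is a nontrivial involution of $T_{x_0}X$, so the tangent spaces of $\Delta_X$ and $\Gamma_i$ are still distinct. The key step is $p_{12}^*\Gamma_i \cdot p_{23}^*\Gamma_i = \{(x, i(x), x) : x \in X\}$, where $i^2 = \mathrm{id}$ is used. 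Collecting the four terms, the left-hand side of (1) becomes
$$\Delta_{X,3} - \{(x,x,i(x))\} - \{(x,i(x),x)\} + \{(x,i(x),i(x))\},$$
and the right-hand side becomes the same sum with the second and third terms interchanged. Both sums are identical, proving (1).

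For (2), the assumption that $a$ is supported on the branch locus of $\pi$ forces $b = \frac{1}{2}\pi^*a$ to be supported on the ramification locus, i.e.\ on $\mathrm{Fix}(i)$. Writing $b = \sum_j n_j [x_j]$ with $i(x_j)=x_j$, the same kind of tangent-space check (together with the observation that $p_2^*b$ is a disjoint sum of slices) gives
$$p_2^*b \cdot p_{23}^*\Delta_X = \sum_j n_j\, X \times \{(x_j, x_j)\}, \quad p_2^*b \cdot p_{23}^*\Gamma_i = \sum_j n_j\, X \times \{(x_j, i(x_j))\},$$
which coincide as cycles since $i(x_j) = x_j$. Subtracting gives (2). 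The main (minor) obstacle is the intersection-theoretic bookkeeping at ramification points, but the explicit tangent-space check shows that no scheme-theoretic correction is required and the products evaluate exactly to the expected graph cycles.
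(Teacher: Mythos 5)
Your proof is correct and follows essentially the same route as the paper: both expand $\Delta_X^{\pm}=\Delta_X\pm\Gamma_i$, identify the four cross-terms on $X^3$ using $i^2=\mathrm{id}$, and prove (\ref{eqegacrucab}) by writing $b=\sum_j n_j x_j$ with $i$-invariant points $x_j$ (forced by $a$ being supported on the branch locus). The only difference is presentational — you compute each product as an explicit transversal graph cycle, whereas the paper matches the products pairwise by substitution on their supports — and your transversality check at ramification points, while not strictly needed for these particular products, is sound.
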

\begin{proof} We compute the left hand side of (\ref{eqegacruc}); we have:
$$p_{12}^*\Delta_X^-\cdot
p_{13}^*\Delta_X^-=p_{12}^*(\Delta_X-\Gamma_i)\cdot
p_{13}^*(\Delta_X-\Gamma_i)$$
$$=p_{12}^*\Delta_X\cdot p_{13}^*\Delta_X-p_{12}^*\Delta_X\cdot
p_{13}^*\Gamma_i-p_{12}^*\Gamma_i\cdot
p_{13}^*\Delta_X+p_{12}^*\Gamma_i\cdot p_{13}^*\Gamma_i.$$ We
observe now that
$$p_{12}^*\Delta_X\cdot p_{13}^*\Delta_X=p_{12}^*\Delta_X\cdot
p_{23}^*\Delta_X,\,\,\, p_{12}^*\Delta_X\cdot
p_{13}^*\Gamma_i=p_{12}^*\Delta_X\cdot p_{23}^*\Gamma_i,$$
$$p_{12}^*\Gamma_i\cdot
p_{13}^*\Delta_X=p_{12}^*\Gamma_i\cdot
p_{23}^*\Gamma_i,\,\,\,p_{12}^*\Gamma_i\cdot
p_{13}^*\Gamma_i=p_{12}^*\Gamma_i\cdot p_{23}^*\Delta_X.$$ It thus
follows that
\begin{eqnarray}\label{eqn1} p_{12}^*\Delta^-\cdot p_{13}^*\Delta^-=p_{12}^*\Delta_X\cdot
p_{23}^*\Delta_X-p_{12}^*\Delta_X\cdot
p_{23}^*\Gamma_i-p_{12}^*\Gamma_i\cdot
p_{23}^*\Gamma_i+p_{12}^*\Gamma_i\cdot p_{23}^*\Delta_X.
\end{eqnarray}
The right hand side of (\ref{eqn1}) is clearly equal to
$$(p_{12}^*\Delta_X +p_{12}^*\Gamma_i    )\cdot(p_{23}^*\Delta_X-
p_{23}^*\Gamma_i),
$$
which is by definition $p_{12}^*\Delta_X^+\cdot p_{23}^*\Delta_X^-$,
thus proving formula (\ref{eqegacruc}).

In order to prove formula (\ref{eqegacrucab}), we use the fact that
the $0$-cycle $b$ can be written as $\sum_jn_j x_j$, where the
$x_j$'s are $i$-invariant. By linearity, it thus suffices to prove
(\ref{eqegacrucab}) when $b$ is an $i$-invariant point of $X$. Now
we have
$$p_2^*b\cdot p_{23}^*\Delta^-=p_{23}^*((b,b)-(b,ib))=0.$$
\end{proof}

\begin{proof}[Proof of Theorem \ref{theogradyconj}]
 By (\ref{eqformproj}), we have to prove that
$$p_{1,\ldots,2m-1*}(\Gamma^{1,2m-1}(X,b))=0\,\,{\rm in}\,\,{\rm CH}_n(X^{2m-1})_\mathbb{Q}.$$ Now, by
(\ref{formulaforgamma1m}) and (\ref{eqpetiteform}),  using
$$2b=\pi^*a,\,\,\Delta_X^+=\pi_2^*\Delta_Y,$$
 we get:
\begin{eqnarray}\label{eqnpourtheo1}
2^{2m-1}\Gamma^{1,2m-1}(X,b)=p_{01}^*(\pi_2^*\Delta_Y^a+\Delta_X^-)\cdot
\ldots \cdot p_{0,2m-1}^*(\pi_2^*\Delta_Y^a+\Delta_X^-).
\end{eqnarray}
Here we use the notation
$$\Delta_Y^a=\Delta_Y-p_2^*a\in {\rm CH}_n(Y\times Y)_\mathbb{Q},\,\,$$
so that we have $\Delta_X^+-2p_2^*b=\pi_2^*\Delta_Y^a$ and
(\ref{uneeqpourd2}) can be written as
\begin{eqnarray}\label{uneeqpourd2bis} p_{12}^*\Delta_X^-\cdot p_{13}^*\Delta_X^-=
p_{12}^*(\pi_2^*\Delta_Y^a)\cdot
p_{23}^*\Delta_X^-.
\end{eqnarray}
our assumption $\Gamma^m(Y,a)=0$ on $Y$ can be written using
Proposition \ref{propeq} as
\begin{eqnarray}\label{eqhyp}q_{01}^*\Delta_Y^a\cdot q_{02}^*\Delta_Y^a\cdot\ldots\cdot
q_{0m}^*\Delta_Y^a=0\,\,{\rm in}\,\,{\rm CH}_n(Y^{m+1})_\mathbb{Q},
\end{eqnarray}
where the $q_{0i}:Y^{m+1}\rightarrow Y\times Y$ are the projectors
onto the product of the first and  $i+1$-th factors.

Denote by $\pi_r:X^r\rightarrow Y^r$. We then clearly have for any
$r$
\begin{eqnarray}
\label{eqntire} \pi_{r+1}^*(q_{01}^*\Delta_Y^a \cdot\ldots\cdot
q_{0r}^*\Delta_Y^a)=p_{01}^*(\pi_2^*\Delta_Y^a)\cdot \ldots \cdot
p_{0r}^*(\pi_2^*\Delta_Y^a)\,\,{\rm in}\,\,{\rm CH}(X^r),
\end{eqnarray}
 and similarly for any
choice of indices $i_1,\ldots,i_r$ in $\{1,\ldots,2m-1\}$.
Developing now the product in (\ref{eqnpourtheo1}), we get a sum of
monomials which up to reordering  the factors, take the form
\begin{eqnarray}
\label{eqntire1} p_{01}^*(\pi_2^*\Delta_Y^a)\cdot \ldots \cdot
p_{0r}^*(\pi_2^*\Delta_Y^a)\cdot
p_{0,r+1}^*\Delta_X^-\cdot\ldots\cdot p_{0,2m-1}^*\Delta_X^-
\end{eqnarray}
for some $r$.
 These
terms vanish for $r\geq m$ by (\ref{eqntire}) and (\ref{eqhyp}).

We now conclude the proof as follows: The terms
$p_{0i}^*\Delta_{X}^-$ for $i\geq r+1$ can be grouped by pairs, and
there are at least $\llcorner\frac{2m-1-r}{2}\lrcorner$ such pairs.
  By (\ref{uneeqpourd2bis}), for each such pair, we have
$$p_{0i}^*\Delta_{X}^-\cdot p_{0,i+1}^*\Delta_{X}^-=p_{0i}^*(\pi_2^*\Delta_Y^a)\cdot
p_{i,i+1}^*\Delta^-.$$ Hence each such pair produces a summand
$p_{0i}^*(\pi_2^*\Delta_Y^a)$. In total we get in (\ref{eqntire1})
at least $r+\llcorner\frac{2m-1-r}{2}\lrcorner$ factors of the form
$p_{0j}^*(\pi_2^*\Delta_Y^a)$. Now we have
$r+\llcorner\frac{2m-1-r}{2}\lrcorner\geq m$ unless $r=0$, and it
follows that (\ref{eqntire1}) vanishes for $r\geq1$.  Hence we
proved that the only possibly nonzero monomial of the form
(\ref{eqntire1}) in the developed product (\ref{eqnpourtheo1}) is
 $p_{01}^*(\Delta_X^-)\cdot \ldots \cdot p_{0,2m-1}^*\Delta_X^-$.
 Thus we  proved that
 \begin{eqnarray}\label{eqnpourtheo2}
2^{2m-1}\Gamma^{1,2m-1}(X,b)=p_{01}^*(\Delta_X^-)\cdot \ldots \cdot
p_{0,2m-1}^*\Delta_X^-\,\,{\rm in}\,\,{\rm CH}(X^{2m}).
\end{eqnarray}
Let $i'$ be the involution $(i,Id,\ldots,Id)$ acting on $X^{2m}$.
Observe that each cycle $p_{0j}^*\Delta_X^-$ is skew-invariant under
${i'}^*$. It follows from (\ref{eqnpourtheo2}) that
$p_{01}^*(\Delta_X^-)\cdot \ldots \cdot p_{0,2m-1}^*\Delta_X^-$ is
skew-invariant under ${i'}^*$, hence also under $i'_*={i'}^*$. But
as we have $p_{1,\ldots,2m-1}\circ i'=p_{1,\ldots,2m-1}$, we have

$$\Gamma^{2m-1}(X,b)=p_{1,\ldots,2m-1*}(\Gamma^{1,2m-1}(X,b))=p_{1,\ldots,2m-1*}\circ i'_*(\Gamma^{1,2m-1}(X,b))$$
$$= -p_{1,\ldots,2m-1*}(\Gamma^{1,2m-1}(X,b))=-\Gamma^{2m-1}(X,b),$$ so that
$\Gamma^{2m-1}(X,b)=0$ in ${\rm CH}_n(X^{2m-1})$.

\end{proof}
We now turn to the proof of Theorem \ref{theointro2}, (ii) and (iii) : in fact, the
result will take the following
more precise form:

\begin{theo}\label{theodm} Let $\pi:X\rightarrow Y$ be a finite morphism of
degree $d$. If $\Gamma^m(Y,a)=0$ in ${\rm CH}(Y^m)/R$ for some
adequate equivalence relation $R$, and $b=\frac{1}{d}p^*a$ satisfies
\begin{eqnarray} \label{eqimportant} b*b=\Delta_*(b)\,\,{\rm in}\,\,{\rm CH}_0(X\times X)/R,
\end{eqnarray}
where $\Delta:X\rightarrow X\times X$ is the diagonal inclusion map,
then $\Gamma^{d(m-1)+1}(X,b)=0$ in ${\rm CH}(X^{dm})/R$.

\end{theo}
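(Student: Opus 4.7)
The plan is to adapt the strategy used to prove Theorem \ref{theogradyconj}, the degree-$2$ case. By Proposition \ref{propeq} (which holds modulo any adequate equivalence relation $R$, since \cite[Proposition 2.4]{ogrady} does), it suffices to show $\Gamma^{1,d(m-1)+1}(X,b) = 0$ in ${\rm CH}_n(X^{d(m-1)+2})/R$. Set $\Delta_X^- := d\Delta_X - \pi_2^*\Delta_Y$ and $\Delta_Y^a := \Delta_Y - p_2^*a$. Since $db = \pi^*a$, one has $d(p_{0i}^*\Delta_X - p_i^*b) = p_{0i}^*(\pi_2^*\Delta_Y^a + \Delta_X^-)$, and hence
$$d^{d(m-1)+1}\,\Gamma^{1,d(m-1)+1}(X,b) \;=\; \prod_{i=1}^{d(m-1)+1}\bigl(p_{0i}^*\pi_2^*\Delta_Y^a + p_{0i}^*\Delta_X^-\bigr).$$
Developing this product yields a sum of monomials indexed by subsets $S \subset \{1,\ldots,d(m-1)+1\}$ (the ``plus'' positions), each consisting of $|S|$ plus-factors $p_{0i}^*\pi_2^*\Delta_Y^a$ and $d(m-1)+1-|S|$ minus-factors $p_{0i}^*\Delta_X^-$, all sharing the index $0$.

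A monomial with $|S| \geq m$ vanishes modulo $R$: any $m$ of its plus-factors form a $\pi_{d(m-1)+2}$-pullback of $\Gamma^{1,m}(Y,a) = 0$ (applying Proposition \ref{propeq} on the $Y$-side). For monomials with $1 \leq |S| \leq m-1$, the plan is to prove and iterate a degree-$d$ analog of identity (\ref{uneeqpourd2bis}): a group of $d$ minus-factors sharing a common index rewrites as one plus-factor at that index, times $d-1$ residual minus-factors sharing a new index, with any $a$-correction arising in the process absorbed by the hypothesis $b*b = \Delta_*b$ through Lemma \ref{lepourprop22avril}. Iterating these conversions on the block of $d(m-1)+1-|S|$ minus-factors at index $0$, one accumulates at least $|S| + \lfloor(d(m-1)+1-|S|)/d\rfloor \geq m$ plus-factors at index $0$ whenever $|S| \geq 1$, so the monomial reduces to the previous case and vanishes.

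The residual all-minus monomial $\prod_{i=1}^{d(m-1)+1} p_{0i}^*\Delta_X^-$ (the case $|S|=0$) must be handled separately, by generalizing the skew-symmetry argument of the $d=2$ proof. A key ingredient is $q_{j,*}\Delta_X^- = 0$ for both projections $q_j : X \times X \to X$ (since $\Delta_X$ and $\pi_2^*\Delta_Y$ both push forward to $d\cdot[X]$); combined with the hypothesis $b*b = \Delta_*b$ through Lemma \ref{lepourprop22avril}, this should force the pushforward of the all-minus monomial via $p'_{1,\ldots,d(m-1)+1}$ to vanish modulo $R$, and then (\ref{eqformproj}) gives $\Gamma^{d(m-1)+1}(X,b) = 0$ modulo $R$. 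The main obstacle is establishing the degree-$d$ conversion identity generalizing (\ref{uneeqpourd2bis}) without Galois assumptions on $\pi$: the plan is to exploit the pigeonhole fact that $\prod_{i=1}^{d} p_{0i}^*\Delta_X^-$ is supported on the locus in $X \times_Y \cdots \times_Y X$ where two of the $d+1$ fiber-constrained coordinates must coincide, yielding an expression in terms of partial diagonals $p_{ij}^*\Delta_X$ that can be rewritten via $d\Delta_X = \pi_2^*\Delta_Y + \Delta_X^-$, with the $a$-corrections tracked uniformly via $b*b = \Delta_*b$. The $|S|=0$ case, where the degree-$2$ Galois involution has no direct analog for a non-Galois cover, is the other delicate point.
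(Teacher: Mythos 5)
Your plan correctly identifies the two places where the degree-$2$ proof does not generalize, but it leaves both of them unresolved, and they are not loose ends: they are the entire technical content of the theorem. The first is your ``degree-$d$ conversion identity'' generalizing (\ref{uneeqpourd2bis}). In the form you state it --- a block of $d$ minus-factors $p_{0i}^*\Delta_X^-$ with $\Delta_X^-:=d\Delta_X-\pi_2^*\Delta_Y$ becoming one plus-factor times $d-1$ minus-factors at shifted indices --- there is no reason to expect such a clean shape for $d\geq 3$: the fiber product $X\times_Y\cdots\times_Y X$ breaks into strata indexed by which coordinates coincide, and the pigeonhole argument produces a sum over all these strata with nontrivial multiplicities, not a single product. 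The paper's substitute is Proposition \ref{legroupd}, which asserts only that $\prod_{i=1}^d p_{0i}^*\Delta_X^b$ lies in the ideal generated by the $p_{0i}^*(\pi_2^*\Delta_Y^a)$, with unspecified cofactors $\Gamma_i$; even this weaker statement requires the inductive machinery of the cycles $\Sigma_k^b$ (with coefficients $\alpha_k=(-1)^kk!$ and $\lambda_{k,i,d}=(d-i-1)\cdots(d-k)$), the hypothesis $b*b=\Delta_*b$ entering through identity (\ref{eq27mai}), and the geometric input that $\Sigma_d$ is empty for a degree-$d$ cover. Your proposal gestures at this but does not carry it out, and notably the paper abandons the $\pm$ decomposition altogether for general $d$, working directly with $\Delta_X^b=\Delta_X-p_2^*b$.

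The second gap is the all-minus monomial ($|S|=0$), for which you admit there is no analog of the Galois skew-symmetry argument. The paper sidesteps this entirely by a different bookkeeping: instead of expanding $\Gamma^{1,d(m-1)+1}(X,b)$ and invoking Proposition \ref{propeq}, it considers $\Gamma^{1,d(m-1)}(X,b)=\prod_{i=1}^{d(m-1)}p_{0i}^*\Delta_X^b$, groups the $d(m-1)$ factors into $m-1$ blocks of size $d$, and applies Proposition \ref{legroupd} to each block unconditionally, so that \emph{every} resulting term carries $m-1$ factors of the form $p_{0i_k}^*(\pi_2^*\Delta_Y^a)$. The hypothesis $\Gamma^m(Y,a)=0$, read through Proposition \ref{propourdm1} on $Y$, converts each such product into $p_0^*a\cdot\Gamma_Y$, whence $\Gamma^{1,d(m-1)}(X,b)=p_0^*b\cdot\Gamma_X$, and Proposition \ref{propourdm1} on $X$ (this is where (\ref{eqimportant}) is used) closes the argument. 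There is no leftover term to kill by a symmetry that does not exist for a non-Galois cover. You would need to either prove your conversion identity in a corrected form strong enough to handle $|S|=0$ as well, or restructure the argument along these lines; as written, the proof is incomplete at exactly its two critical steps.
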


Statement (ii) of Theorem \ref{theointro2} is the case where $R$ is rational equivalence
(that is $R=0$) and $b$ is the class of
a  point of $X$, as all points satisfy (\ref{eqimportant}) modulo rational equivalence.
Statement (iii) of Theorem \ref{theointro2} is the case where $R$ is algebraic equivalence.
Indeed, Theorem \ref{theodm} applies since the equality
$b*b=\Delta_*(b)\,\,{\rm in}\,\,{\rm CH}_0(X\times X)$ modulo algebraic equivalence
is satisfied by $0$-cycles of degree $1$ on a connected variety.

We first introduce some notation. Let as above
$\Delta_Y^a:=\Delta_Y-p_2^*a\in {\rm CH}_n(Y\times Y)$ and similarly
$\Delta_X^b:=\Delta_X-p_2^*b\in {\rm CH}_n(X\times X)$. In both
expressions, $p_2$ is the  projection from $Y\times Y$, resp.
$X\times X$ onto its second factor. The proof  of Theorem
\ref{theodm} will use  the following result (which will replace
formula (\ref{eqegacruc}) used previously when $d=2$):
\begin{prop} \label{legroupd}
The morphism $\pi:X\rightarrow Y$ and the $0$-cycle $b$ being as in
Theorem \ref{theodm}, there exist  cycles $\Gamma_i\in {\rm
CH}^{(d-1)n}(X^{d+1})$ such that
\begin{eqnarray}\label{eqformcrucpourdn}
\prod_{i=1}^dp_{0i}^*\Delta_X^{b}=\sum_ip_{0i}^*(\pi_2^*\Delta_Y^a)\cdot
p_{0,D\setminus\{i\}}^*\Gamma_i \,\,{\rm in}\,\, {\rm
CH}^{nd}(X^{d+1})/R,
\end{eqnarray}
where $D$ is the set $\{1,\ldots,d\}$ and as usual
$p_{0,D\setminus\{i\}}$ is the projection onto the product of the factors indexed
by the set $\{0\}\cup D\setminus\{i\}$.
\end{prop}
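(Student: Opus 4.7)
My plan for Proposition \ref{legroupd} is to generalise the strategy behind Lemma \ref{leutilepour23mars}: decompose $\Delta_X^b$ against $\pi_2^*\Delta_Y^a$, expand the product multinomially, and then reduce the single ``pure-residual'' summand by a further structural argument.

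Set $\rho:=\pi_2^*\Delta_Y-\Delta_X\in\mathrm{CH}^n(X\times X)$ (the ``off-diagonal'' part of the fibre product; for $d=2$ this is the graph of the involution $i$). Since $\pi^*a=d\,b$ implies $\pi_2^*p_2^*a=d\,p_2^*b$, a direct calculation gives
\[
\Delta_X^b=\pi_2^*\Delta_Y^a-S,\qquad S:=\rho-(d-1)p_2^*b\in\mathrm{CH}^n(X\times X).
\]
Expanding multinomially,
\[
\prod_{i=1}^d p_{0i}^*\Delta_X^b=\sum_{T\subseteq D}(-1)^{|D\setminus T|}\prod_{i\in T}p_{0i}^*(\pi_2^*\Delta_Y^a)\,\prod_{j\in D\setminus T}p_{0j}^*S.
\]
Every summand with $T\neq\emptyset$ is already of the required form: fixing any $i\in T$, the remaining factors only involve indices in $\{0\}\cup(D\setminus\{i\})$, and so the summand can be written as $p_{0i}^*(\pi_2^*\Delta_Y^a)\cdot p_{0,D\setminus\{i\}}^*\Gamma$ for a suitable $\Gamma$ on the smaller product. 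The whole task reduces to showing that the pure-residual summand $(-1)^d\prod_{j\in D}p_{0j}^*S$ admits the same type of decomposition.

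For this reduction, the two essential ingredients are: (a) structural identities on $X^{d+1}$ relating products of $\rho$'s to products of $\pi_2^*\Delta_Y$'s and partial diagonals, generalising the excess-intersection computation (\ref{eqegacruc}) from the $d=2$ case; and (b) the hypothesis $b\ast b=\Delta_*b$, used exactly as in (\ref{eqegacrucab}) to provide vanishings of the form $p_l^*b\cdot p_{0j}^*\rho\equiv 0\pmod R$. Together these permit one to replace $\pi_2^*\Delta_Y$-factors by $\pi_2^*\Delta_Y^a$-factors modulo the target ideal, and to iteratively extract a $p_{0i}^*(\pi_2^*\Delta_Y^a)$-factor out of $\prod_j p_{0j}^*S$, the residue being cycles built from partial diagonals, $\rho$'s and $p_l^*b$'s on the remaining indices.

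The principal obstacle is the combinatorial complexity for $d\geq 3$. Unlike in the $d=2$ case, where $\rho$ has singleton fibres over $X$ (first projection) and the single pairwise identity (\ref{eqegacruc}) suffices, for general $d$ the fibres of $\rho\to X$ have $d-1$ points, so the required reduction is a genuinely $d$-fold identity that must be established by an intersection-theoretic computation on the symmetrised fibre product $X\times_Y\ldots\times_Y X$. At the level of supports, the reduction is enforced by a pigeonhole argument on $\prod_{j=1}^d p_{0j}^*\rho$: this is an intersection of $d$ conditions each confining one factor to the $(d-1)$-point set $\pi^{-1}(\pi(x_0))\setminus\{x_0\}$, forcing a coincidence $x_i=x_j$ for some distinct $i,j\in D$. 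Turning this set-theoretic observation into an explicit Chow-theoretic identity of the form prescribed by the Proposition, and organising the output so that each $\Gamma_i$ is pulled back along $p_{0,D\setminus\{i\}}$, is the substantive technical content; the base case $d=2$ is recovered from Lemma \ref{leutilepour23mars} exactly when the $(d-1)$-point set collapses to a single point, at which moment the pigeonhole step is automatic.
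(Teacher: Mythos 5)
Your opening move is exactly right and coincides with the paper's: writing $\Delta_X^b=\pi_2^*\Delta_Y^a-S$ with $S=\Sigma_1^b:=\bigl(\pi_2^{-1}(\Delta_Y)-\Delta_X\bigr)-(d-1)p_2^*b$, expanding multinomially, and observing that every mixed term already has the required shape. But the proposal then stops precisely where the proof begins: you state that converting the set-theoretic pigeonhole observation on $\prod_{j}p_{0j}^*\rho$ into a Chow-theoretic identity ``is the substantive technical content,'' and you do not carry it out. That conversion is not a routine excess-intersection bookkeeping exercise; it is the whole point of the Proposition. The paper handles it by introducing, for each $k$, the cycle $\Sigma_k\subset X^{k+1}$ (closure of the locus of $(x,x_1,\dots,x_k)$ with $\pi(x_i)=\pi(x)$, the $x_i$ pairwise distinct and distinct from $x$) and its $b$-corrected version $\Sigma_k^b$, and proving by induction the identity $\alpha_k\Gamma^{1,k}(X,b)=\Sigma_k^b$ modulo the ideal generated by the $p_{0i}^*(\pi_2^*\Delta_Y^a)$, with $\alpha_k=(-1)^kk!$; the conclusion follows because $\Sigma_d^b=0$. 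The inductive step requires two non-obvious corrections: the components where $x_{k+1}$ collides with some $x_i$ (formula (\ref{numero118avril})), and, in the $b$-corrected version, an extra term $-\sum_i p_{0,\ldots,\hat{i},k+1}^*\Sigma_k^b\cdot p_i^*b$ whose cancellation against $\Gamma^{1,k+1}(X,b)$ is exactly where the hypothesis $b*b=\Delta_*b$ enters (identities (\ref{eqsigma21avril1}) and (\ref{eq27mai})). None of this appears in your sketch, and it is not recoverable from the $d=2$ case by formal analogy, since for $d\geq 3$ one must control the full stratification by coincidences among $x_1,\dots,x_d$, not a single pairwise identity.

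A secondary but genuine error: the vanishing you invoke in ingredient (b), namely $p_l^*b\cdot p_{0j}^*\rho\equiv 0\pmod R$, is false. Already for $d=2$ with $b$ a ramification point one has $p_1^*b\cdot\Gamma_i=(b,ib)=(b,b)\neq 0$; what vanishes there is $p_1^*b\cdot\Delta_X^-$, and for general $d$ the hypothesis $b*b=\Delta_*b$ yields vanishings only for the combination $\rho-(d-1)p_2^*b$ intersected with $b$ pulled back from the correct factor (the source of the correspondence), not for $\rho$ alone, and not symmetrically in the two factors. Getting these signs, multiplicities and factor-asymmetries right is exactly what the combinatorial coefficients $\lambda_{k,i,d}=(d-i-1)\cdots(d-k)$ in the paper's formula (\ref{eqpourSigmakb}) encode. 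As it stands, your argument is a correct plan with the central lemma missing.
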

Before giving the proof, we will first  prove a similar statement of independent interest
for $\Delta_X$ and $\Delta_Y$, instead of $\Delta_X^b$ and $\Delta_Y^a$, as the proof is much
simpler to write and we will use similar but slightly more involved arguments to
prove Proposition \ref{legroupd}. Namely we have the following result:
\begin{prop} \label{peutservir} Let $\pi:X\rightarrow Y$ be a finite morphism of degree
$d$. There exist  cycles $\Gamma'_i\in {\rm CH}^{n(d-1)}(X^{d+1})$
such that
\begin{eqnarray}\label{eqformcrucpourdn2}
\prod_{i=1}^dp_{0i}^*\Delta_X=\sum_ip_{0i}^*(\pi_2^*\Delta_Y)\cdot
p_{0,D\setminus\{i\}}^*\Gamma'_i \,\,{\rm in}\,\, {\rm
CH}^{nd}(X^{d+1}).
\end{eqnarray}
\end{prop}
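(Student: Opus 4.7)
I would reduce the statement to a question about a residual cycle, and handle that residual via a pigeonhole argument combined with the basic commutativity relation in the ideal generated by $\{p_{0i}^*(\pi_2^*\Delta_Y)\}_{i=1}^d$.

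Set $R:=\pi_2^*\Delta_Y-\Delta_X\in {\rm CH}^n(X\times X)$ and expand
$$\prod_{i=1}^d p_{0i}^*\Delta_X = \prod_{i=1}^d\bigl(p_{0i}^*(\pi_2^*\Delta_Y)-p_{0i}^*R\bigr) = \sum_{S\subseteq D}(-1)^{|D\setminus S|}\prod_{i\in S}p_{0i}^*(\pi_2^*\Delta_Y)\cdot\prod_{i\notin S}p_{0i}^*R.$$
For every non-empty $S$, picking any $i_0\in S$ displays the corresponding summand in the required form $p_{0i_0}^*(\pi_2^*\Delta_Y)\cdot p_{0,D\setminus\{i_0\}}^*(\,\cdot\,)$, since the remaining factors involve only coordinates indexed by $\{0\}\cup D\setminus\{i_0\}$. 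The proposition is thereby reduced to showing that the single leftover term $(-1)^d\prod_{i=1}^d p_{0i}^*R$ also lies in the same ideal.

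The key observation is a pigeonhole argument: any point $(x_0,\ldots,x_d)$ in the support of $\prod_i p_{0i}^*R$ satisfies $\pi(x_i)=\pi(x_0)$ and $x_i\ne x_0$ for every $i\in D$, so $x_1,\ldots,x_d$ all lie in $\pi^{-1}(\pi(x_0))\setminus\{x_0\}$, a subscheme of length at most $d-1$; hence two of them must coincide. Therefore $\prod_i p_{0i}^*R$ is set-theoretically supported on $\bigcup_{1\le i<j\le d}\{x_i=x_j\}$. To promote this to a Chow-level identity in the desired ideal, I would exploit the basic relation
$$p_{0i}^*(\pi_2^*\Delta_Y)\cdot p_{ij}^*(\pi_2^*\Delta_Y) = p_{0i}^*(\pi_2^*\Delta_Y)\cdot p_{0j}^*(\pi_2^*\Delta_Y),$$
valid because any two of $\pi(x_0)=\pi(x_i)$, $\pi(x_0)=\pi(x_j)$, $\pi(x_i)=\pi(x_j)$ imply the third scheme-theoretically (on any component where the intersection is proper). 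Combined with the inclusion $\{x_i=x_j\}\subseteq p_{ij}^*(\pi_2^*\Delta_Y)$, this allows each partial-diagonal contribution of $\prod_i p_{0i}^*R$ to be absorbed into the generators $p_{0k}^*(\pi_2^*\Delta_Y)$, with the leftover coefficients descending (after a moving-lemma style cleanup) to cycles on the appropriate $X^d$.

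\textbf{Main obstacle.} The delicate step is converting the set-theoretic support statement into a clean Chow-group decomposition whose coefficients genuinely pull back from $X^d$, especially when $\pi$ is ramified so that $R$ is not literally disjoint from $\Delta_X$ and multiplicities along the partial diagonals have to be tracked honestly. This is the technical heart; the same toolkit, with $\Delta_X,\Delta_Y$ replaced by $\Delta_X^b,\Delta_Y^a$, will then be adapted to prove the harder Proposition \ref{legroupd}.
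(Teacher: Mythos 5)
Your opening reduction is correct and is essentially the same first move as the paper's: with $R=\pi_2^*\Delta_Y-\Delta_X$ (the paper's residual correspondence $\Sigma_1$), every term of the expansion containing at least one factor $p_{0i_0}^*(\pi_2^*\Delta_Y)$ visibly lies in the ideal $E_d$ generated by the $p_{0i}^*(\pi_2^*\Delta_Y)$, so everything comes down to showing $\prod_{i=1}^d p_{0i}^*R\in E_d$. Your pigeonhole observation is also exactly the geometric input the paper uses: it is the reason the paper's cycle $\Sigma_d$ (the closure of the locus where $x_1,\ldots,x_d$ are pairwise distinct, distinct from $x_0$, and all in the fibre of $x_0$) is empty. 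But the step you yourself flag as ``the technical heart'' is a genuine gap, not a technicality. Knowing that the class $\prod_i p_{0i}^*R$ can be represented by a cycle supported on $\bigcup_{1\le i<j\le d}\{x_i=x_j\}$ gives you no handle for the absorption you propose: a cycle supported on $\{x_i=x_j\}$ is not thereby a multiple of $p_{ij}^*(\pi_2^*\Delta_Y)$ (nor of $p_{ij}^*\Delta_X$) in the Chow ring, so the relation $p_{0i}^*(\pi_2^*\Delta_Y)\cdot p_{ij}^*(\pi_2^*\Delta_Y)=p_{0i}^*(\pi_2^*\Delta_Y)\cdot p_{0j}^*(\pi_2^*\Delta_Y)$ has nothing to act on, and no moving lemma converts ``supported on a partial diagonal'' into ``divisible by that diagonal''. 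Multiplicities along the diagonals (precisely where $\pi$ ramifies) are also left untracked.

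What fills this gap in the paper is an induction on $k$ carried out entirely at the level of explicit cycles. One introduces $\Sigma_k\subset X^{k+1}$ as above and proves $\alpha_k\prod_{i=1}^k p_{0i}^*\Delta_X=\Sigma_k$ modulo the ideal, with $\alpha_k=(-1)^kk!$. The inductive step rests on the exact cycle identity
\begin{equation*}
p_{0,\ldots,k}^*(\Sigma_k)\cdot p_{0,k+1}^*\Sigma_1=\Sigma_{k+1}+\sum_{i=1}^k p_{0,\ldots,k}^*(\Sigma_k)\cdot p_{i,k+1}^*\Delta_X,
\end{equation*}
which is precisely the honest, multiplicity-aware version of your support statement: the fibred product on the left breaks into the ``new point'' component $\Sigma_{k+1}$ plus the components where $x_{k+1}$ collides with some $x_i$. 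The diagonal terms are then re-expressed through the induction hypothesis and the identity $\prod_{j\le k}p_{0j}^*\Delta_X\cdot p_{i,k+1}^*\Delta_X=\prod_{j\le k+1}p_{0j}^*\Delta_X$, yielding the linear recursion $\prod^{k+1}=-\frac{1}{\alpha_k}\Sigma_{k+1}-k\prod^{k+1}$; the process terminates because $\Sigma_d=\emptyset$. Without this (or an equivalent device), your plan does not close, and the same missing mechanism would block the adaptation to Proposition \ref{legroupd}.
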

\begin{proof}
Indeed, let us denote by $E_{k}\subset {\rm CH}(X^{k+1})$ the ideal
generated by the elements $p_{0i}^*(\pi_2^*\Delta_Y)$, $i=1,\ldots,
k$. Next let \begin{eqnarray}\label{sigma118avril}
\Sigma_1:=\pi_2^{-1}(\Delta_Y)- \Delta_X\in {\rm CH}(X\times X).
\end{eqnarray} Note that, because
$\pi$ is finite of degree $d$, $\Sigma_1$ is the class of the
Zariski closure in $X\times X$ of the subvariety $\{(x,x_1)\in
X^0\times X^0,\,\pi(x_1)=\pi(x),\,x_1\not=x\}$ where
$X^0:=\pi^{-1}(Y^0)$ and $Y^0$ is the open set of $Y$ over which
$\pi$ is \'etale of degree $d$. The first projection
$pr_1:\Sigma_1\rightarrow X$ has degree $d-1$. Let us denote more
generally by $\Sigma_k\subset X^{k+1}$ the Zariski closure in
$X^{k+1}$ of the subvariety
\begin{eqnarray}
\label{defsigmak}\{(x,x_1,\ldots,x_k)\in
(X^0)^{k+1},\pi(x_i)=\pi(x),\,\,x_i\not=x_j\,\,{\rm
for}\,\,i\not=j,\,x_i\not=x\,\,{\rm for \,\,all}\,\,i\}.
\end{eqnarray}

The contents of formula (\ref{eqformcrucpourdn2}) is that
$\prod_{i=1}^dp_{0i}^*\Delta_X$ belongs to $E_{d}$. It is therefore a consequence of the following
statement:
\begin{claim} For any integer $k\geq 1$, one has
\begin{eqnarray}\label{eqsigma18avril}
\alpha_k\prod_{i=1}^kp_{0i}^*\Delta_X=\Sigma_k \,\,{\rm in
\,\,CH}(X^{k+1})/E_{k+1},
\end{eqnarray}
with $\alpha_k=(-1)^kk!$. In particular,
$\prod_{i=1}^dp_{0i}^*\Delta_X=0$ in ${\rm  CH}(X^{d+1})/E_{d}$.
\end{claim}
 The second statement follows from the first since $\Sigma_d$ is empty.
The first statement is proved by induction on $k$. For $k=1$, the
result  is (\ref{sigma118avril}). The induction step is immediate:
we have the following equalities in ${\rm CH}(X^{k+1})$:
\begin{eqnarray}\label{eqsigma18avril111}\prod_{i=1}^{k+1}p_{0i}^*\Delta_X=p_{0,\ldots,k}^*
(\prod_{i=1}^{k}p_{0i}^*\Delta_X)\cdot p_{0,k+1}^*\Delta_X\\
\nonumber =-(\prod_{i=1}^{k}p_{0i}^*\Delta_X)\cdot
p_{0,k+1}^*\Sigma_1\,\,{\rm mod}\,\,E_{k+1}\\
\nonumber =-\frac{1}{\alpha_k}p_{0,\ldots,k}^*(\Sigma_k)\cdot
p_{0,k+1}^*\Sigma_1\,\,{\rm mod}\,\,E_{k+1}. \end{eqnarray} On the
other hand, we observe that $\Sigma_{k+1}$ is obtained from
$p_{0,\ldots,k}^*(\Sigma_k)\cdot p_{0,k+1}^*\Sigma_1$ by removing in
the fibered product the components where $x_{k+1}$ equals one of the
$x_i$'s for $i=1,\ldots,k$. This gives rise to the following
identity:
\begin{eqnarray}\label{numero118avril}
p_{0,\ldots,k}^*(\Sigma_k)
\cdot p_{0,k+1}^*\Sigma_1=\Sigma_{k+1}+\sum_{i=1}^kp_{0,\ldots,k}^*(\Sigma_k)\cdot
p_{i,k+1}^*\Delta_X.
\end{eqnarray}
In the right hand side of (\ref{numero118avril}), we can replace
(using again the induction hypothesis) $\Sigma_k$ by
${\alpha_k}\prod_{j=1}^{k}p_{0j}^*\Delta_X$ mod $E_k$ and we also
observe that
\begin{eqnarray}\label{numero118avri1111l}\prod_{j=1}^{k}p_{0j}^*\Delta_X\cdot
p_{i,k+1}^*\Delta_X=\prod_{i=1}^{k+1}p_{0i}^*\Delta_X
\end{eqnarray}
 for any
$i=1,\ldots,k$. Hence we get, using (\ref{eqsigma18avril111}),
(\ref{numero118avril}) and (\ref{numero118avri1111l}),
$$\prod_{i=1}^{k+1}p_{0i}^*\Delta_X
=-\frac{1}{\alpha_k}\Sigma_{k+1}-k\prod_{i=1}^{k+1}p_{0i}^*\Delta_X.$$

This finally provides
$$ \alpha_{k+1}\prod_{i=1}^{k+1}p_{0i}^*\Delta_X=\Sigma_{k+1}$$
with $\alpha_{k+1}=-(k+1)\alpha_k$.
\end{proof}

\begin{proof}[Proof of Proposition \ref{legroupd}]
We   follow the above argument with $\Delta_X$, $\Delta_Y$ replaced
by $\Delta_X^b$ and $\Delta_Y^a$, in order to prove Lemma
\ref{le20avril} below. We use the following notation: we will work
with the $n$-cycle $\Sigma_k^b$ of $X^{k+1}$ obtained by replacing
formally in the definition (\ref{defsigmak}) of $\Sigma_k$ each
$x_i$ by $x_i-b$ and developing multilinearly. More rigorously,
$\Sigma_k$ admits morphisms $p,\,p_i:\Sigma_k\rightarrow X$,
obtained by restricting the projections $X^{k+1}\rightarrow X$
(where the factors are indexed by $\{0,\ldots,k\}$ and $p=p_0$). Let
$\Gamma_i\subset \Sigma_{k}\times X$ be the graphs of these
projections. Then we can obviously define $\Sigma_k\subset X^{k+1}$
as $(p,pr_{X^k})_*(\prod_{i=1}^kpr_{\Sigma_k,i}^*\Gamma_i)$, where
\begin{itemize}
\item $pr_{X^k}:\Sigma_k\times X^k\rightarrow X^k$ is the second
projection and $(p,pr_{X^k}):\Sigma_k\times X^k\rightarrow X^{k+1}$
is the obvious morphism.
\item $pr_{\Sigma_k,i}: \Sigma_k\times X^k\rightarrow \Sigma_k\times
X$ is the projection on the product of the first factor and the
$i$-th factor of $X^k$.
\end{itemize}
On the other hand, we also have in  $\Sigma_{k}\times X$  the graph
$\Sigma_k\times\{b\}$ of the constant morphism mapping to $b$ if $b$
is a point, or more generally the $n$-cycle $pr_X^*b$ if $b$ is any
$0$-cycle of degree $1$. We then define analogously $\Sigma_k^b$ as
follows:
\begin{eqnarray} \Sigma_k^b=(p,pr_{X^k})_*(\prod_{i=1}^kpr_{\Sigma_k,i}^*(\Gamma_i-pr_X^*
b))\,\,{\rm in}\,\,{\rm CH}(X^{k+1}).
\end{eqnarray}
 Developing the product above, we see that the formula for $\Sigma_k^b$ is
 of the form
\begin{eqnarray}\label{eqpourSigmakb}\Sigma_k^b=\sum_{I\subset
\{1,\ldots,k\}}(-1)^{k-i}\lambda_{k,i,d}p_{0,I}^*\Sigma_i\cdot
p_{J}^*b^{*j}\in {\rm CH}_n(X^{k+1}),
\end{eqnarray}
where in the formula above, $I\sqcup J=\{1,\ldots,k\}$, $i=|I|$, and
the $\lambda_{k,j,d}$ are combinatorial coefficients given by the
formula
\begin{eqnarray}\label{eqpourlambdaij}
\lambda_{k,i,d}=(d-i-1)(d-i-2)\ldots(d-k).
\end{eqnarray}
Indeed, the reason for (\ref{eqpourlambdaij})  is the fact that the
projection map
$$p_{0,I}:\Sigma_k\rightarrow \Sigma_i\subset X^{i+1}$$
has degree $(d-i-1)(d-i-2)\ldots(d-k)$.
 Note  in particular, that $\Sigma_k^b=0$ for $k\geq d$.
 Next we define
$E_{k,a,R}\subset {\rm CH}(X^{k+1})/R$ as the ideal generated by the
$p_{0,i}^*\Delta_Y^a$ for $i=1,\ldots,k$. Recall that
$\Gamma^{1,k}(X,b)=\prod_{i=1}^kp_{0i}^*\Delta_X^b$.
\begin{lemm} \label{le20avril}
The morphism $\pi:X\rightarrow Y$ and the $0$-cycle $b$ being as in Theorem \ref{theodm},
for any integer $k\geq 1$, one has
\begin{eqnarray}\label{eqsigma20avril}
\alpha_k\Gamma_{1,k}(X,b)=\Sigma_{k}^b \,\,{\rm in\,\,
CH}(X^{k+1})/E_{k,a,R},
\end{eqnarray}

\end{lemm}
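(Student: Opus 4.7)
The plan is to proceed by induction on $k$, in close parallel with the proof of the claim inside Proposition \ref{peutservir}, with the unmodified ingredients $(\Delta_X,\Delta_Y,\Sigma_k,E_k)$ replaced throughout by their $(a,b)$-modifications $(\Delta_X^b,\Delta_Y^a,\Sigma_k^b,E_{k,a,R})$. The hypothesis (\ref{eqimportant}) on $b$ will play in the modified setting the structural role that was trivially automatic in the unmodified one.

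For the base case $k=1$, the point is to check that
$$\pi_2^*\Delta_Y^a = \Delta_X^b + \Sigma_1^b\quad\text{in } \mathrm{CH}_n(X\times X).$$
This follows by combining the defining equality $\pi_2^*\Delta_Y = \Delta_X+\Sigma_1$ from (\ref{sigma118avril}) with the identity $\pi_2^*p_2^*a = d\,p_2^*b$ (which reflects $\pi^*a = d\,b$) and with the explicit formula $\Sigma_1^b = \Sigma_1 - (d-1)p_2^*b$ obtained by specializing (\ref{eqpourSigmakb}) and (\ref{eqpourlambdaij}) to $k=1$. Reducing modulo $E_{1,a,R}$ gives $\Sigma_1^b\equiv -\Delta_X^b$, which is (\ref{eqsigma20avril}) for $k=1$ since $\alpha_1=-1$ and $\Gamma^{1,1}(X,b)=\Delta_X^b$.

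For the induction step, assuming (\ref{eqsigma20avril}) at level $k$, I would write
$$\Gamma^{1,k+1}(X,b) = p_{0,1,\ldots,k}^*\bigl(\Gamma^{1,k}(X,b)\bigr)\cdot p_{0,k+1}^*\Delta_X^b,$$
apply the induction hypothesis on the first factor and pull back the base-case relation $\Delta_X^b\equiv -\Sigma_1^b$ via $p_{0,k+1}$ on the second factor, both operations being legitimate modulo $E_{k+1,a,R}$. It then remains to establish the $b$-analogue of (\ref{numero118avril}), namely
$$p_{0,\ldots,k}^*\Sigma_k^b\cdot p_{0,k+1}^*\Sigma_1^b = \Sigma_{k+1}^b + \sum_{i=1}^k p_{0,\ldots,k}^*\Sigma_k^b\cdot p_{i,k+1}^*\Delta_X^b\quad\text{in } \mathrm{CH}(X^{k+2})/R.$$
Granting this, reapplying the induction hypothesis to rewrite each term of the sum as $\alpha_k\,\Gamma^{1,k+1}(X,b)$ yields exactly the same recursion $\alpha_{k+1}=-(k+1)\alpha_k$ as in the unmodified proof, delivering (\ref{eqsigma20avril}) at level $k+1$ with $\alpha_{k+1}=(-1)^{k+1}(k+1)!$.

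The main obstacle is the displayed $b$-modified decomposition identity. Expanding both sides via (\ref{eqpourSigmakb}) produces cross-terms where the external $b$-factors land simultaneously on the new coordinate $x_{k+1}$ and on a preexisting $x_i$, and these do not repackage into $\Sigma_{k+1}^b$ plus pure diagonal corrections on the nose. This is precisely where assumption (\ref{eqimportant}) intervenes: whenever the geometric intersection that produces $\Sigma_{k+1}$ degenerates to a diagonal $p_{i,k+1}^*\Delta_X$ while the $b$-modification simultaneously inserts a copy of $b$ at coordinates $i$ and $k+1$, the factor $p_i^*b\cdot p_{k+1}^*b$ must be replaced by $p_{i,k+1}^*(\Delta_*b)$ modulo $R$ in order to merge with the diagonal term. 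Once this substitution is made, matching the combinatorial coefficients $\lambda_{k,i,d}$ on both sides is a bookkeeping exercise directly analogous to the unmodified case, but this one identity carrying (\ref{eqimportant}) is the substantive step on which the whole induction rests.
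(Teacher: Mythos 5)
Your overall strategy (induction mirroring the proof of Proposition \ref{peutservir}, with all objects replaced by their $b$-modified versions) is the right one, and your base case is correct. But the induction step has a genuine gap: the displayed $b$-modified decomposition identity is false as stated. On the product of the $i$-th and $(k+1)$-th factors, the left-hand side $p_{0,\ldots,k}^*\Sigma_k^b\cdot p_{0,k+1}^*\Sigma_1^b$ contributes, along the locus $x_{k+1}=x_i$, the term $(x_i-b,x_i-b)=(x_i,x_i)-(x_i,b)-(b,x_i)+(b,b)$, whereas your correction term $p_{0,\ldots,k}^*\Sigma_k^b\cdot p_{i,k+1}^*\Delta_X^b$ contributes $\Delta_*(x_i-b)-(x_i-b,b)$, which equals $(x_i,x_i)-(x_i,b)$ only after the substitution $\Delta_*b=p_1^*b\cdot p_2^*b$ of (\ref{eqimportant}). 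The leftover discrepancy is $-(b,x_i-b)$, i.e.\ the cycle $-p_{0,\ldots,\hat{i},k+1}^*\Sigma_k^b\cdot p_i^*b$, and it does not vanish modulo $R$ in general. So the correct identity, which is (\ref{eqsigma21avril1}) in the paper, carries the extra summand $-\sum_{i=1}^{k}p_{0,\ldots,\hat{i},k+1}^*\Sigma_k^b\cdot p_i^*b$; assumption (\ref{eqimportant}) only accounts for the $\Delta_*b$ versus $p_1^*b\cdot p_2^*b$ part of the mismatch, not for this leftover.

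Relatedly, your final step of ``reapplying the induction hypothesis to rewrite each term of the sum as $\alpha_k\,\Gamma^{1,k+1}(X,b)$'' also fails: unlike the unmodified identity (\ref{numero118avri1111l}), one has $p_{0,\ldots,k}^*\Gamma^{1,k}(X,b)\cdot p_{i,k+1}^*\Delta_X^b\neq\Gamma^{1,k+1}(X,b)$ in general, because $\Delta_X^b$ is not supported on the diagonal. What is true --- this is identity (\ref{eq27mai}), and it requires a second application of (\ref{eqimportant}) --- is that the combination $p_{0,\ldots,k}^*\Gamma^{1,k}(X,b)\cdot p_{i,k+1}^*\Delta_X^b-p_{0,\ldots,\hat{i},k+1}^*\Gamma^{1,k}(X,b)\cdot p_i^*b$ equals $\Gamma^{1,k+1}(X,b)$ modulo $R$. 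Your two omissions compensate numerically, so you land on the correct recursion $\alpha_{k+1}=-(k+1)\alpha_k$, but each of the two intermediate identities you assert is false; the extra term in the decomposition and the defect in the diagonal-exchange step must be kept, paired, and cancelled against each other, and this pairing is where (\ref{eqimportant}) does its real work.
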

\begin{proof}
We have by (\ref{eqpourSigmakb}), (\ref{eqpourlambdaij})
$$\Delta_X^b=\pi_2^*\Delta_Y^a-\Sigma_1^b,$$ which can be written as
$\Delta_X^b=-\Sigma_1^b$ mod $E_{1,a,R}$, proving the case $k=1$.
Assume the formula is proved for $k$. Then we have
\begin{eqnarray}\label{eqsigma20avril1}p_{0,1,\ldots,k}^*\Sigma_{k}^b\cdot p_{0,k+1}^*\Sigma_1^b=
-\alpha_k p_{0,1,\ldots,k}^*\Gamma^{1,k}(X,b)\cdot
p_{0,k+1}^*\Delta_X^b
\\ \nonumber
=-\alpha_k \Gamma^{1,k+1}(X,b)\,\,{\rm in}\,\,{\rm
CH}(X^{k+1})/E_{k,b,R}.
\end{eqnarray}
Next we claim that we have the following relation in ${\rm
CH}(X^{k+2})/R$:
\begin{eqnarray}\label{eqsigma21avril1}p_{0,1,\ldots,k}^*\Sigma_{k}^b\cdot p_{0,k+1}^*\Sigma_1^b=
\Sigma_{k+1}^b+\sum_{i=1}^{k}p_{0,\ldots,k}^*\Sigma_k^b\cdot p_{i,k+1}^*\Delta_X^b\\
\nonumber -\sum_{i=1}^{k}p_{0,\ldots,\hat{i},k+1}^*\Sigma_k^b\cdot
p_i^*b.
\end{eqnarray}
This relation uses in a crucial way the identity
\begin{eqnarray}\label{eqiddiagb26aout}
\Delta_*b=p_1^*b\cdot p_2^*b\,\,{\rm in}\,\,{\rm CH}_0(X\times X)/R.
\end{eqnarray}
 The beginning
$$p_{0,1,\ldots,k}^*\Sigma_{k}^b\cdot p_{0,k+1}^*\Sigma_1^b=
\Sigma_{k+1}^b+\sum_{i=1}^{k}p_{0,\ldots,k}^*\Sigma_k^b\cdot
p_{i,k+1}^*\Delta_X^b+...$$ of the formula (\ref{eqsigma21avril1})
is easily understood: it expresses the fact that in the left hand
side, we include all possible $x_{k+1}\not=x$, while in
$\Sigma_{k+1}^b$, we  have to take into account  the restriction
$x_{k+1}\not=x_i$ for $i=1,\ldots,k$. The last term in
(\ref{eqsigma21avril1}) is explained as follows. The intersection
with $p_{i,k+1}^*\Delta_X^b=p_{i,k+1}^*\Delta_X-p_{k+1}^*b$ produces
a term
$$\Delta_*(x_i-b)-(x_i-b,b)=(x_i,x_i)-\Delta_*b-(x_i,b)+p_i^*bp_{k+1}^*b$$
$$= (x_i,x_i)-(x_i,b)$$
on the product of the $i$th and $k+1$th factors. On the other hand,
we   had on the left in (\ref{eqsigma21avril1}) the  term
$$(x_i-b,x_i-b)=(x_i,x_i)-(x_i,b)-(b,x_i)+(b,b)$$ on the product of the $i$th and
$k+1$th factors, which is unwanted in the development of
$\Sigma_{k+1}^b$. Hence we also have to add on the right the extra
term $-(b,x_i-b)$ on the product of the $i$th and $k+1$th factors,
which is exactly the meaning of the term
$-p_{0,\ldots,\hat{i},k+1}^*\Sigma_k^b\cdot p_i^*b$.
 Thus the
claim is proved.

 Combined with
(\ref{eqsigma20avril1}) and the inductive assumption,
(\ref{eqsigma21avril1}) gives

\begin{eqnarray}\label{eqsigma20avril2}
-\alpha_k \Gamma^{1,k+1}(X,b)= \Sigma_{k+1}^b
\\ \nonumber
+\alpha_k(\sum_{i=1}^{k}p_{0,\ldots,k}^* \Gamma^{1,k}(X,b)\cdot
p_{i,k+1}^*\Delta_X^b-\sum_{i=1}^{k}p_{0,\ldots,\hat{i},k+1}^*
\Gamma^{1,k}(X,b)\cdot p_i^*b).
\end{eqnarray}
The equality above holds in ${\rm CH}(X^{k+2})/E_{k+1,a,R}$. Let us
now  prove that for any $i$,
 \begin{eqnarray}\label{eq27mai}p_{0,\ldots,k}^*\Gamma^{1,k}(X,b)\cdot
p_{i,k+1}^*\Delta_X^b-p_{0,\ldots,\hat{i},k+1}^*
\Gamma^{1,k}(X,b)\cdot p_i^*b=\Gamma^{1,k+1}(X,b)
\end{eqnarray}
in ${\rm CH}(X^{k+2})/R$.
 As
$$\Gamma^{1,k}(X,b)=\prod_{i=1}^kp_{0i}^*\Delta_X^b,\,\,\Gamma^{1,k+1}(X,b)=
\prod_{i=1}^{k+1}p_{0i}^*\Delta_X^b,$$ it clearly suffices to show
that the cycles $p_{01}^*\Delta_X^b\cdot
p_{12}^*\Delta_X^b-p_1^*b\cdot p_{02}^*\Delta_X^b$
 and $p_{01}^*\Delta_X^b\cdot p_{02}^*\Delta_X^b$
of $X^3$ are equal in ${\rm CH}(X^{3})/R$. We have
$$p_{01}^*\Delta_X^b\cdot p_{12}^*\Delta_X^b-p_1^*b\cdot p_{02}^*\Delta_X^b= (p_{01}^*\Delta_X-p_1^*b)\cdot
(p_{12}^*\Delta_X-p_2^*b)-p_1^*b\cdot p_{02}^*\Delta_X+p_1^*b\cdot p_2^*b$$
$$=p_{01}^*\Delta_X\cdot p_{12}^*\Delta_X-p_{01}^*\Delta_X\cdot p_2^*b-
p_1^*b\cdot p_{12}^*\Delta_X+p_1^*b\cdot p_2^*b-p_1^*b\cdot p_{02}^*\Delta_X+p_1^*b\cdot p_2^*b$$
$$=p_{01}^*\Delta_X\cdot p_{02}^*\Delta_X-p_{01}^*\Delta_X
\cdot p_2^*b-p_1^*b\cdot p_{02}^*\Delta_X+p_1^*b\cdot p_2^*b$$ in
${\rm CH}(X^3)/R$ because we assumed $p_1^*b\cdot
p_{12}^*\Delta_X=p_1^*b\cdot p_2^*b $ in ${\rm CH}(X^3)/R$ (cf.
(\ref{eqimportant})). On the other hand,
$$p_{01}^*\Delta_X^b\cdot p_{02}^*\Delta_X^b=(p_{01}^*\Delta_X-p_1^*b)\cdot (p_{02}^*\Delta_X-p_2^*b)$$
$$=p_{01}^*\Delta_X\cdot p_{02}^*\Delta_X-p_{01}^*\Delta_X\cdot p_2^*b-
p_1^*b\cdot p_{02}^*\Delta_X+p_1^*b\cdot p_2^*b.$$ Hence we proved
that both terms in (\ref{eq27mai}) are equal; using
(\ref{eqsigma20avril2}), we then get:
$$-\alpha_k \Gamma^{1,k+1}(X,b)=
\Sigma_{k+1,b}+\alpha_k(\sum_{i=1}^{k}\Gamma^{1,k+1}(X,b))
,$$
hence
$$-(k+1)\alpha_k \Gamma^{1,k+1}(X,b)=\Sigma_{k+1,b}\,\,{\rm in}\,\,{\rm CH}(X^{k+2})/R$$
and Lemma \ref{le20avril} is proved.

\end{proof}
Finally,  Lemma \ref{le20avril} for $k=d$  implies Proposition
\ref{legroupd} since $\Sigma_d^b=0$.

\end{proof}

\begin{proof}[Proof of Theorem \ref{theodm}]
By Lemma \ref{legroupd} applied to each set of $d$ indices
$\{1,\ldots,d\},\,\{d+1,\ldots,2d\}$, $\{(m-2)d+1,\ldots,(m-1)d\}$,
we can write $\prod_{i=1}^{d(m-1)}p_{0i}^*\Delta_X^{b}$ as a sum of
products of $m-1$ cycles, each of them being of the form
$p_{0i}^*(\pi_2^*\Delta_Y^a)\cdot \Gamma''$ for
 an adequate index
$i$ (one in each of the sets above). We now apply Proposition
\ref{propourdm1} to both $Y$ and $X$. Thus the assumption
$\Gamma^m(Y,a)=0$ implies that for some cycle $\Gamma_Y$ on $Y^m$,
$$\prod_{i=1}^{m-1}p_{0i}^*\Delta_Y^a=p_0^*a\cdot \Gamma_Y\,\,{\rm in}\,\,{\rm CH}(Y^m)/R.$$

Applying this relation to each product of $m-1$ factors
$\prod_{k=1}^{m-1}p_{0i_k}^*(\pi_2^*\Delta_Y^a)$ for adequate
indices $i_k$ appearing above, we conclude that
$$\Gamma^{1,d(m-1)}(X,b)=\prod_{i=1}^{d(m-1)}p_{0i}^*\Delta_X^{b}=p_{0}^*b\cdot\Gamma_X\,\,{\rm in}\,\,{\rm CH}(X^{d(m-1)+1})/R$$
for some cycle $\Gamma_X$ on $X^{d(m-1)+1}$. By Proposition
\ref{propourdm1}, and using the fact that $b$ satisfies property
(\ref{eqimportant}), (that is, condition (\ref{eqcyclea}) in
Proposition \ref{propourdm1}), we conclude that
$\Gamma^{d(m-1)+1}(X,b)=0$ in ${\rm CH}(X^{d(m-1)+1})/R$.

\end{proof}
\subsection{Case of curves\label{subsec}}
A special case of Theorem \ref{theointro2}, (iii) is the case where
$Y=\mathbb{P}^1$, so $X$ is a $d$-gonal curve. We then get the
vanishing $\Gamma^{d+1}(X,b)=0$ in ${\rm CH}_1(X^{d+1})/{\rm alg}$,
where $b$ is any point of $X$. Recall now the Beauville
decomposition of cycles on an abelian variety $A$ modulo rational or
algebraic equivalence:
$${\rm CH}_i(A)=\oplus_s{\rm CH}_i(A)_s,$$
with
$${\rm CH}_i(A)_s:=\{z\in  {\rm CH}_i(A),\,\,\mu_{k*}z=k^{2i+s}z\,\,{\rm for\,\,all}\,\,k\in
\mathbb{Z}^*\}$$ and similarly for Chow groups modulo algebraic
equivalence. Here $\mu_k:A\rightarrow A$ is the morphism $a\mapsto
ka$. Let now $X$ be a smooth genus $g$ projective curve and
$A:=J(X)$. $X$ has an embedding in $J(X)$ which is canonical up to
translation, hence determines a $1$-cycle $Z$ in $J(X)$, well
defined modulo algebraic equivalence. Thus we have a Beauville
decomposition $$Z=\sum_s Z_s\,\,{\rm in}\,\, {\rm CH}_1(A)/{\rm
alg}.$$ For nonvanishing results concerning the cycles $Z_s$ (when
$X$ is very general) and its decomposition, let us mention
\cite{fakh}, \cite{voiinfjac} (in the later paper, it is proved that
if $g\geq s^2/2$, then $Z_{s}\not=0$ modulo algebraic equivalence
for a very general curve $X$ of genus $g$).

  Let us show the
following:
\begin{prop} \label{propsansnom12mai} The vanishing of $\Gamma^{d+1}(X,b)$ in ${\rm
CH}_1(X^{d+1})/{\rm alg}$ is equivalent to the vanishing of
$Z_s,\,\forall s\geq{d-1}$, in ${\rm CH}_1( J(X))/{\rm alg}$.
\end{prop}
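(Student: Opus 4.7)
My plan is to route the argument through the Abel--Jacobi sum map
\[
\sigma_m\colon X^m\to J(X),\qquad \sigma_m(x_1,\ldots,x_m)=\sum_{i=1}^m \phi(x_i),
\]
with $\phi(x):=[x-b]\in J(X)$, and through the Beauville decomposition of $Z:=\phi_*[X]\in{\rm CH}_1(J(X))/{\rm alg}$, writing $Z=\sum_s Z_s$ with $\mu_{k*}Z_s=k^{s+2}Z_s$. Since we work modulo algebraic equivalence, we may assume the degree $1$ zero-cycle $b$ is a single point.

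First I would compute $\sigma_{m*}\Gamma^m(X,b)$ termwise from \eqref{eqformgammam}. The typical summand $p_I^*(b^{*|I|})\cdot p_J^*\Delta_{m-|I|}$ is supported on a copy of $X$ embedded by the small diagonal on the $J$-factors, with the $I$-factors frozen at $b$; on this copy, $\sigma_m$ is $x\mapsto (m-|I|)\phi(x)$, so the pushforward of that summand is $\mu_{(m-|I|)*}Z$. Counting subsets of each size and applying the Stirling identity $\sum_{k=0}^m(-1)^{m-k}\binom{m}{k}k^n=m!\,S(n,m)$ to the resulting combination, one obtains
\[
\sigma_{m*}\Gamma^m(X,b)=\sum_{k=1}^m(-1)^{m-k}\binom{m}{k}\mu_{k*}Z=\sum_{s\geq m-2}m!\,S(s+2,m)\,Z_s,
\]
whose leading coefficient $m!\,S(m,m)=m!$ is nonzero.

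The forward implication then follows by triangular linear algebra: if $\Gamma^{d+1}(X,b)=0$ mod alg, Theorem \ref{theoog}(ii) (whose proof is insensitive to the equivalence relation) gives $\Gamma^m(X,b)=0$ mod alg for every $m\geq d+1$, so the system $\sum_{s\geq m-2}m!\,S(s+2,m)Z_s=0$ holds for all $m\geq d+1$. Combined with Beauville's vanishing $Z_s=0$ for $s\geq g$, one solves backwards from $m=g+1$ (yielding $Z_{g-1}=0$) down to $m=d+1$ (yielding $Z_{d-1}=0$).

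For the converse, assume $Z_s=0$ for all $s\geq d-1$. Here the issue is that $\sigma_{m*}\Gamma^m(X,b)=0$ in ${\rm CH}_1(J(X))/{\rm alg}$ does not by itself recover $\Gamma^m(X,b)=0$. My plan is to refine the argument by working with the componentwise Abel--Jacobi map $\phi^m\colon X^m\hookrightarrow J(X)^m$ and the multi-graded Beauville decomposition of ${\rm CH}_1(J(X)^m)/{\rm alg}$ obtained from the action of $(\mu_{k_1},\ldots,\mu_{k_m})$. Expanding $(\phi^m)_*\Gamma^m(X,b)$ in this multi-grading, each nonzero piece is a tensor expression built from $Z_{s_1},\ldots,Z_{s_m}$; a combinatorial check modelled on the one above should show that for $m=d+1$ every contributing multi-index has some $s_i\geq d-1$, so all pieces vanish under the hypothesis. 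Injectivity of $(\phi^m)_*$ on the subspace of ${\rm CH}_1(X^m)/{\rm alg}$ spanned by $S_m$-symmetric cycles like $\Gamma^m(X,b)$ (which in turn reduces to the fact that algebraic equivalence classes of $1$-cycles on $X^m$ are detected by their images in the Beauville-graded pieces on $J(X)^m$, $\phi$ being a closed embedding) then concludes $\Gamma^{d+1}(X,b)=0$.

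The main obstacle is precisely this converse: the forward direction is a clean triangular extraction once the pushforward formula is in place, whereas the converse requires genuinely new input, namely matching the full multi-graded decomposition of $(\phi^m)_*\Gamma^m(X,b)$ to the univariate $Z_s$'s and establishing the injectivity/detection statement for symmetric $1$-cycles on $X^m$ modulo algebraic equivalence.
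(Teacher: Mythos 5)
Your forward direction is essentially correct and matches the paper's computation: the pushforward of $\Gamma^{m}(X,b)$ to $J(X)$ under the sum map is $\sum_{k=1}^{m}(-1)^{m-k}\binom{m}{k}\mu_{k*}Z$, and the coefficient of $Z_s$ vanishes exactly for $s\leq m-3$ and is nonzero for $s\geq m-2$ (your Stirling identity is the paper's lemma on the sums $\sum_k(-1)^{m-k}\binom{m}{k}k^{2+s}$). You do not even need the triangular system over all $m\geq d+1$: since the $Z_s$ lie in distinct eigenspaces of $\mu_{k*}$, the single relation for $m=d+1$ already forces $Z_s=0$ for all $s\geq d-1$.

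The converse, however, contains a genuine gap, and while you have correctly located where the difficulty sits, the route you propose does not cross it. The claim that $(\phi^{m})_*$ is injective on the span of symmetric $1$-cycles ``because $\phi$ is a closed embedding'' is false as stated: pushforward along a closed embedding is in general very far from injective on Chow groups modulo algebraic equivalence, and symmetry does not rescue this. Likewise, a $1$-cycle on $J(X)^{m}$ admits no K\"unneth-type decomposition into ``tensor expressions built from $Z_{s_1},\ldots,Z_{s_m}$''; the multi-graded Beauville eigenspaces exist, but their elements are not exterior products of cycles from the factors, so the combinatorial vanishing you want to check is not even well posed. The paper supplies the missing detection statement by a chain of three genuine equivalences: (1) since $\Gamma^{d+1}(X,b)$ is $\mathfrak{S}_{d+1}$-invariant, its vanishing is equivalent to that of its image $\overline{\Gamma}^{d+1}(X,b)$ in the symmetric product $X^{(d+1)}$; (2) the stabilization $z\mapsto z+(g-d-1)b$ into $X^{(g)}$ is injective on this particular cycle because the incidence correspondence $\Sigma\subset X^{(d+1)}\times X^{(g)}$ satisfies $\Sigma^*\bigl(b_{g-d-1\,*}\overline{\Gamma}^{d+1}(X,b)\bigr)=\overline{\Gamma}^{d+1}(X,b)$; (3) the Griffiths group of $1$-cycles modulo algebraic equivalence is a birational invariant and $X^{(g)}$ is birational to $J(X)$ via the Abel map, so vanishing on $X^{(g)}$ is equivalent to vanishing of the image $W$ in ${\rm CH}_1(J(X))/{\rm alg}$. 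Steps (2) and (3) are exactly the input your sketch lacks; they exploit the special shape of the cycle and the birational geometry of $X^{(g)}$, not an injectivity of pushforward along $X^{m}\hookrightarrow J(X)^{m}$.
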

\begin{proof} It suffices to prove the result for $d\leq g-1$,
because we know by Theorem \ref{theointro2}, (iii) (see Remark
\ref{remanew25aout}) that $\Gamma^{d+1}(X,b)=0$ in ${\rm
CH}_1(X^{d+1})/{\rm alg}$ for some $d\leq g-1$. Assuming the
proposition  proved for $d\leq g-1$, this implies that $Z_s=0$ in
${\rm CH}_1( J(X))/{\rm alg}$ for all $ s\geq{g-1}$, and thus for
$d\geq g$, both vanishing statements are true.

 We thus assume $d\leq g-1$; note that the
cycle $\Gamma^{d+1}(X,b)$ is a $1$-cycle of $X^{d+1}$ which is
invariant under the action of the symmetric group $
\mathfrak{S}_{d+1}$, so that its vanishing in ${\rm
CH}_1(X^{d+1})/{\rm alg}$ is equivalent to the vanishing of its
image $\overline{\Gamma}^{d+1}(X,b)$ in ${\rm CH}_1(X^{(d+1)})/{\rm
alg}$. We now consider the inclusion
$$b_{g-d-1}:X^{(d+1)}\rightarrow X^{(g)}$$
$$z\mapsto z+(g-d-1)b$$
and claim that $\overline{\Gamma}^{d+1}(X,b)=0$ in ${\rm
CH}_1(X^{(d+1)})/{\rm alg}$ if and only if $b_{g-d-1
\,*}(\overline{\Gamma}^{d+1}(X,b))=0$ in ${\rm CH}_1(X^{(g)})/{\rm
alg}$. Indeed, there is an incidence correspondence
$$\Sigma\subset X^{(d+1)}\times
X^{(g)},\,\Sigma=\{(z,z'),\,z'=z+z''\,\,{\rm for\,\,some}\,\,z''\in
X^{(g-d-1)}\}.$$ It is not hard to see that, due to its special
form, the cycle $\overline{\Gamma}^{d+1}(X,b)$ satisfies
$$\Sigma^*(b_{g-d-1\,*}(\overline{\Gamma}^{d+1}(X,b)))=\overline{\Gamma}^{d+1}(X,b),$$
which proves the claim.

 The next step is to observe that the Griffiths group
of $1$-cycles homologous to $0$ modulo algebraic equivalence is a
birational invariant. This is elementary to show using resolution of
indeterminacies of birational maps, as it is invariant under blow-up
and is functorial under pushforward and pullbacks under generically
finite morphisms. As $X^{(g)}$ is birational to $J(X)$ via the Abel
map, we conclude that $\overline{\Gamma}^{d+1}(X,b)=0$ in ${\rm
CH}_1(X^{(d+1)})/{\rm alg}$ if and only if its image $W$ in $J(X)$
under the Abel map vanishes in ${\rm CH}_1(J(X))/{\rm alg}$.

Finally, we observe that a cycle appearing in the formula
(\ref{eqformgammam}) for $\Gamma^{d+1}(X,b)$, which is up to
permutation of the form
$$\{(x,\ldots,x,b,\ldots,b),\,x\in X\},$$
where $x$ appears $k$ times and $b$ appears $d+1-k$ times, maps
under the Abel map to a $1$-cycle of $J(X)$ algebraically equivalent
to $\mu_{k*}(Z)$. The vanishing of $W$ in ${\rm CH}_1(J(X))/{\rm
alg}$ thus gives
\begin{eqnarray}
\label{eqnvaniW1}\sum_{k=1}^{d+1}(-1)^{d+1-k}\binom{d+1}{k}\mu_{k*}Z=0\,\,{\rm
in }\,\,{\rm CH}_1(J(X))/{\rm alg}.
\end{eqnarray}
 Writing the Beauville decomposition
$$Z=\sum_s Z_s,$$
the vanishing of $W$ in ${\rm CH}_1(J(X))/{\rm alg}$ is equivalent
to

\begin{eqnarray}
\label{eqnvaniW2}
\sum_{k=1}^{d+1}(-1)^{d+1-k}\binom{d+1}{k}k^{2+s}Z_s=0,\,{\rm
in}\,\,{\rm CH}_1(J(X))/{\rm alg}
\end{eqnarray}
for any $s$.

We now have the following easy lemma:
\begin{lemm} We have
$\sum_{k=1}^{d+1}(-1)^{d+1-k}\binom{d+1}{k}k^{2+s}=0$ for $s\leq
d-2$, and
$$\sum_{k=1}^{d+1}(-1)^{d+1-k}\binom{d+1}{k}k^{2+s}\not=0$$ for
$s\geq d-1$.
\end{lemm}
This shows that the vanishing (\ref{eqnvaniW2}) is equivalent to the
vanishing of $Z_s$ for $s\geq d-1$.
\end{proof}
\begin{rema}{\rm Proposition \ref{propsansnom12mai} is also proved
in \cite{moonenyin}, where it is used to deduce the vanishing
$\Gamma^{g+2}(X,a)=0$ of Remark \ref{remaref}, for any point $a\in
X$, from the main result of Colombo and van Geemen
\cite{colombovgeemen}.}
\end{rema}
\section{Hyper-K\"ahler manifolds\label{sec3}}
\subsection{Proof of Theorem \ref{theoHKintro}}
We prove in this section the following theorem (cf. Theorem
\ref{theoHKintro} of the introduction):
\begin{theo} \label{theoHK} Let $S$ be a $K3$ surface, and let $X=S^{[n]}$. Then
\begin{eqnarray}
\Gamma^{2n+1}(X,o_X)=0 \,\,{\rm in \,\, CH}_{2n}(X^{2n+1}),
\label{eqformk3prouvefin}
\end{eqnarray}
where $o_X$ is the canonical $0$-cycle on $X$ constructed from the canonical $0$-cycle of
$S$.
\end{theo}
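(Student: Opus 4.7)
The starting point is Theorem \ref{theoog}(i), which gives $[\Gamma^{2n+1}(X,o_X)] = 0$ in cohomology since $q(X)=0$ and $\dim X = 2n$. The task is therefore to lift this cohomological vanishing to an equality in $\mathrm{CH}_{2n}(X^{2n+1})$. Following the strategy of \cite{voisinpamq}, my first step is to transport the problem to the self-products $S^N$ via the de Cataldo-Migliorini decomposition theorem, which expresses the Chow motive of $S^{[n]}$ in terms of symmetric products of $S$ and hence, after pulling back along the quotients $S^k \to S^{(k)}$, in terms of the $S^N$. The auxiliary Proposition \ref{proppourtheofin} is designed precisely to translate the cycle $\Gamma^{2n+1}(X,o_X)$, through the dCM correspondences, into a universal polynomial expression in the big diagonals $p_{ij}^*\Delta_S$ on $S^N$ and in the pull-backs $p_i^*o_S$ of the Beauville-Voisin canonical $0$-cycle. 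The crucial feature of this translation is that, since $o_X$ is itself built from $o_S$, the only Chow classes on $S$ that enter the reduction are $\Delta_S$ and $o_S$; no other divisors or transcendental classes are involved.

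Once the problem is reduced to such a universal polynomial relation on $S^N$, Yin's result \cite{yin} becomes the central tool. It asserts that on a regular surface, every cohomological relation between the big diagonals $p_{ij}^*\Delta_S$ and the point-class pull-backs $p_i^*[o_S]$ is a consequence — modulo obvious multilinear identities — of pull-backs of just two basic relations: the Kimura finite-dimensionality relation on $H^2(S)$, and the cohomological Gross-Schoen relation $[\Gamma^3(S,o_S)] = 0$ in $H^8(S^3,\mathbb{Q})$. Applied to the universal polynomial produced in the previous step, this expresses the cohomological vanishing of $\Gamma^{2n+1}(X,o_X)$ as an explicit $\mathbb{Q}$-linear combination of pull-backs of these two relations.

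The final step is to upgrade this combination to an identity in the Chow ring. The Gross-Schoen piece poses no difficulty: by Theorem \ref{BV} of Beauville-Voisin, the relation $\Gamma^3(S,o_S) = 0$ already holds in $\mathrm{CH}_2(S^3)$, so all its pull-backs vanish in Chow as well. The main obstacle, and the heart of the argument, is the Kimura contribution, which is genuinely cohomological and is not known to lift to $\mathrm{CH}$. My plan for handling it is to argue that the Kimura term cannot actually appear in the particular expansion we obtain: the cycle $\Gamma^{2n+1}(X,o_X)$, after the dCM reduction, is manifestly built only from diagonals and from $o_S$ (never from transcendental classes in $H^2(S)$), and the Kimura relation lives in a part of the ring generated by transcendental $H^2$-classes. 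Hence, on bidegree and symmetry grounds, the coefficient of the Kimura pull-back in Yin's expansion must be identically zero, and the entire expression reduces to a combination of pull-backs of $\Gamma^3(S,o_S)$. The latter all vanish in $\mathrm{CH}$ by Beauville-Voisin, and transporting the resulting Chow identity back to $X^{2n+1}$ through the dCM correspondence yields $\Gamma^{2n+1}(X,o_X) = 0$ in $\mathrm{CH}_{2n}(X^{2n+1})$.
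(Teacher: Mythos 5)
Your proposal reproduces the paper's architecture exactly: reduce to $S^N$ via de Cataldo--Migliorini and Proposition \ref{proppourtheofin}, invoke Yin's theorem to identify the cohomological relation as lying in the ideal generated by the Kimura relation and the Beauville--Voisin relations, discard the Kimura contribution, and conclude in ${\rm CH}$ using Theorem \ref{theorappelrelbeauvoi}. However, the one step that carries the real weight of the proof --- ruling out the Kimura contribution --- is justified by an argument that does not work. You claim that the Kimura relation ``lives in a part of the ring generated by transcendental $H^2$-classes'' while the expression obtained from $\Gamma^{2n+1}(X,o_X)$ involves only diagonals and $o_S$, so that ``on bidegree and symmetry grounds'' the Kimura coefficient must vanish. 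But the Kimura relation, as written in (\ref{eqkimura}), \emph{is} a polynomial identity among the monomials $\prod pr_{s_it_i}^*[\Delta_S]^0$, i.e.\ among products of diagonal and point classes (and $[K_S]$) --- exactly the same kind of monomials your reduced expression consists of. Indeed Yin's Theorem \ref{theoyin} says that \emph{every} relation among the no-repeated-index monomials is generated by the Kimura relation; if a ``transcendental versus algebraic'' bidegree argument could exclude it, it would exclude all such relations, which is absurd. Concretely, for a $K3$ surface the Kimura relation lives on $S^{2b_2+2}=S^{46}$, and for $n\geq 5$ one has $N=\sum_i l(\mu_i)\leq (2n+1)n\geq 55>46$, so nothing prevents the Kimura relation from entering a relation on $S^N$ for the $K3$ surface itself.

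The mechanism the paper actually uses, and which your proposal is missing, is the \emph{universality} of the polynomial $Q_{\mu_\cdot}$: by Proposition \ref{proppourtheofin} (resting on Theorem \ref{theopourtheofin} about universally defined cycles), the same polynomial computes $(E_{\mu_1},\dots,E_{\mu_{2n+1}})^*(\Gamma^{2n+1}(X,o_X))$ for \emph{every} regular surface $S$. One then tests the cohomological vanishing on an auxiliary surface with $b_1=0$ and $b_2(S)>\frac{n(2n+1)}{2}$: there the Kimura relation requires more than $N$ factors and so produces no relations in the relevant range, forcing $Q_{\mu_\cdot}$ to lie in the ideal generated by the trivial relations and those of Theorem \ref{theorappelrelbeauvoi} with $L=K_S$ (Corollary \ref{corodelafinfin}). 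Only then does one specialize back to the $K3$ surface, where these relations hold in ${\rm CH}$. Two smaller points: the universal polynomial a priori also involves $pr_j^*K_S$ and $pr_i^*c_2(S)$ (the canonical class genuinely appears, by excess intersection, as the remark after Proposition \ref{proppourtheofin} shows), and these only disappear at the very end because $K_S=0$ and $c_2(S)=24\,o_S$ for a $K3$; and the passage back to $X^{2n+1}$ requires the injectivity statement of Corollary \ref{coroinj}, which you use implicitly. As it stands, your proof has a genuine gap at its central step.
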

Here the  cycle $o_S$ appears in the
 following theorem from \cite{beauvoi}  providing a list of relations
 which hold in the Chow ring of a self-product
of a $K3$ surface.
\begin{theo}\label{theorappelrelbeauvoi} Let $S$ be a smooth projective $K3$ surface. Then
there is a degree $1$ zero-cycle $o_S\in {\rm CH}_0(S)$ satisfying
the following equalities (which are all polynomial relations in
${\rm CH}(S^k)$ for adequate $k$, between the cycles
$p_i^*o_S,\,p_j^*L,\,p_{st}^*\Delta_S$):
\begin{enumerate}
\item $L^2-{\rm deg}\,(L^2)\,o_S=0\,\,{\rm in}\,\,{\rm CH}_0(S),\,{\rm for\,\,any\,\,}L\in Pic\,S$.
 \item $\Delta_S.p_1^*L-L\times o_S-o_S\times L=0$  in ${\rm CH}_1(S\times S)$
  for any $L\in {\rm Pic}\,S$,
where $p_1$  is the first projection from $S\times S$ to $S$, and
$L\times o_S=p_1^*L\cdot p_2^*o_S$.
\item \label{5} $\Gamma^3(S,o_S)=0$ in ${\rm
CH}_2(S\times S\times S)$.  (Using formula (\ref{eqformgammam}) and
the identity
$\Delta_3=p_{12}^*\Delta_S\cdot p_{13}^*\Delta_S$,
we can also see Property \ref{5} as a polynomial relation in ${\rm
CH}(S^3)$ involving the classes $p_{ij}^*\Delta_S$ and
$p_k^*(o_S)$.)
\item \label{itemsansnom} $\Delta_S^2=24\,p_1^*o_S\cdot p_2^*o_S$ in ${\rm CH}_0(S\times S)$.
\item \label{item3} $\Delta_S.p_1^*o_S-p_1^*o_S\cdot p_2^*o_S=0$ in ${\rm CH}_0(S\times S)$.
\end{enumerate}
\end{theo}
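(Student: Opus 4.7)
The plan is to deduce the five relations from the single geometric input that a projective K3 surface $S$ is swept out by (possibly singular) rational curves (Bogomolov--Mumford), together with the existence of a covering family of genus-$1$ curves. I begin by fixing an irreducible rational curve $R_0 \subset S$ and defining $o_S$ to be the class in ${\rm CH}_0(S)$ of any point of $R_0$. Because the normalization $\mathbb{P}^1 \to R_0$ forces ${\rm CH}_0(R_0) = \mathbb{Z}$, all points of $R_0$ are rationally equivalent in $S$; and since two rational curves on a K3 have positive intersection against an ample polarization and hence meet (or are linked by chains of rational curves), this class is intrinsic to $S$. Pick an actual point $p$ representing $o_S$. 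Relation (5) is then immediate on the nose: $\Delta_S \cdot p_1^*[p] = \Delta_*[p] = p_1^*[p] \cdot p_2^*[p]$.

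Relations (1) and (2) are both derived by reducing to the case of a rational curve. For (1), given $L \in {\rm Pic}(S)$, I would choose $n$ so that $|nL|$ contains a rational curve $C$; then $L^2 = \tfrac{1}{n} L \cdot [C]$ is represented by $\tfrac{1}{n} L|_C$, a zero-cycle of degree $L^2$ on the rational $C$, equal to $L^2 \cdot o_S$ in ${\rm CH}_0(S)$. For (2), take $L = [C]$ rational; then $\Delta_S \cdot p_1^*[C]$ is the pushforward of the small diagonal $\Delta_C \subset C \times C$ to $S \times S$, and since the diagonal of $\mathbb{P}^1$ is rationally equivalent to $\{{\rm pt}\} \times \mathbb{P}^1 + \mathbb{P}^1 \times \{{\rm pt}\}$ in ${\rm CH}_1(\mathbb{P}^1 \times \mathbb{P}^1)$, pushforward yields $o_S \times C + C \times o_S$; the general case of (2) follows by $\mathbb{Q}$-linearity. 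Relation (4), which amounts to $c_2(T_S) = 24\,o_S$ in ${\rm CH}_0(S)$, is handled by specializing $S$ to an elliptic K3: a section of $T_S$ lifted from a vector field on the base $\mathbb{P}^1$ vanishes on the singular fibers, all of which are rational, so $c_2(T_S)$ is represented by a zero-cycle supported on rational curves and hence proportional to $o_S$, with coefficient $24$ determined by degree.

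The main obstacle is relation (3), the codimension-$4$ cycle identity $\Gamma^3(S, o_S) = 0$ in ${\rm CH}_2(S^3)$. Unlike the four others, this is not a $0$-cycle identity and cannot be obtained by the Bloch--Srinivas argument applied to a single factor, because ${\rm CH}_0(S)$ is infinite-dimensional and the fibers $(x - o_S)^{*2}$ do not vanish on a Zariski-open subset of $S$. The strategy I would follow, due to Beauville--Voisin, is to exhibit a covering family of genus-$1$ curves $\{E_t\}_{t \in T}$ on $S$, each containing a point $e_t \in E_t$ with $[e_t] = o_S$ in ${\rm CH}_0(S)$. By Gross--Schoen, $\Gamma^3(E_t, e_t) = 0$ in ${\rm CH}_1(E_t^3)$, since every elliptic curve is hyperelliptic via the linear system $|2e_t|$ and $e_t$ is a Weierstrass point of this pencil. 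Pushing forward along $E_t^3 \to S^3$ and using relations (1), (2), (5) to match the pushforward with the restriction of $\Gamma^3(S, o_S)$ to the image, one concludes that this restriction vanishes for every $t$; a Bloch--Srinivas/moving argument applied to the family $\{E_t^3\}$ sweeping $S^3$ then forces the desired identity. The technical crux, and the real hard step, is producing the family $\{E_t\}$ compatibly with $o_S$: for a general K3 without an elliptic fibration one has to take an adequate pencil of genus-$1$ curves in some auxiliary linear system (or a degeneration of $S$), matching the fiberwise Weierstrass-like point with a point rationally equivalent to $o_S$.
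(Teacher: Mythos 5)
The paper does not actually prove this theorem: it is quoted from \cite{beauvoi}, and the only argument supplied in the text is the remark following the statement, namely that relation (5) is immediate because $o_S$ is the class of a point, and that relation (4) follows from relation (3) (which yields $c_2(S)=24\,o_S$) combined with relation (5). Measured against the actual Beauville--Voisin arguments, your treatment of (1), (2) and (5) is essentially correct: defining $o_S$ via a point on a rational curve, reducing (1) and (2) to rational curves by writing every divisor class as a $\mathbb{Q}$-combination of classes of rational curves, and pushing forward $\Delta_{\mathbb{P}^1}\sim \mathbb{P}^1\times pt+pt\times \mathbb{P}^1$ is the standard route. One slip: two rational curves each having positive degree against an ample class need not meet \emph{each other}; the well-definedness of $o_S$ requires the Bogomolov--Mumford input that ample linear systems contain (connected, possibly reducible) rational curves, which then serve as connectors. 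Your argument for (4) by ``specializing $S$ to an elliptic K3'' is not valid as stated: a rational equivalence holding on a special fibre of a family does not propagate to the general fibre (specialization of Chow groups goes the other way), and a vector field on the base does not lift to a global section of $T_S$. This is repairable, since (4) is a formal consequence of (3) and (5), as the paper observes.

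The genuine gap is in (3), which is the heart of the theorem. Your proposed mechanism --- sweep $S$ by genus-one curves $E_t$ carrying points $e_t$ rationally equivalent to $o_S$, apply Gross--Schoen fibrewise to get $\Gamma^3(E_t,e_t)=0$, then conclude by a Bloch--Srinivas/moving argument ``applied to the family $\{E_t^3\}$ sweeping $S^3$'' --- cannot work: the parameter space $T$ of geometric genus-one curves in an ample linear system is one-dimensional, so $\bigcup_t E_t^3$ is only a fourfold inside the sixfold $S^3$ and does not sweep it; there is no dense open subset of $S^3$ on which the fibrewise vanishing gives control of the $2$-cycle $\Gamma^3(S,o_S)$. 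The present paper in fact makes precise exactly what this strategy does yield: Theorem \ref{theointro1preciseintro}, proved by precisely the spreading argument you invoke (applied factor by factor to $\Gamma^{1,m}$), shows that sweeping by genus-$g$ curves supporting $a$ gives $\Gamma^{(n+1)(g+1)}(X,a)=0$, hence only $\Gamma^{6}(S,o_S)=0$ for a K3 surface swept out by elliptic curves --- and the introduction explicitly notes that this is ``not optimal in view of the relation (\ref{eqformk3})''. The sharp vanishing $\Gamma^{3}(S,o_S)=0$ requires the genuinely different and more delicate argument of \cite{beauvoi}; it does not follow from the fibrewise Gross--Schoen relation by spreading, so the step you describe as ``the technical crux'' is not merely technical but is a gap that this strategy cannot close.
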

Note that property \ref{item3} is (\ref{eqimportant}) and is easily
satisfied because $o_S$ is the class of a point in $S$. Property
\ref{itemsansnom} is a consequence of Property \ref{5} which implies
$c_2(S)=24 \, o_S$ in ${\rm CH}_0(S)$, and Property \ref{item3}.
\begin{rema}\label{rematrivialrel} {\rm The above relations are the nontrivial relations
involving $p_i^*(o_S),\,p_j^*L,\,L\in{\rm CH}^1(S) $ and the $p_{kl}^*\Delta_S$
and with the property that in at least one monomial, an index is repeated. To make a complete
list of such relations, one should add the ``trivial relations'', which hold on any surface, namely
\begin{enumerate}
\item $o_X\cdot L=0$ in ${\rm CH}(S)$, $L\in {\rm CH}^1(S)$,
\item $o_X\cdot o_X=0$ in ${\rm CH}(S)$,
\item $p_{12}^*\Delta_S\cdot p_{23}^*\Delta_S=p_{13}^*\Delta_S\cdot p_{23}^*\Delta_S$ in ${\rm CH}(S\times S\times S)$.
\end{enumerate}}
\end{rema}

As in
\cite{voisinpamq}, the ingredients of the proof of Theorem \ref{theoHK} are
   1) the results of de Cataldo-Migliorini
\cite{decami}, which will allow, thanks to Proposition
\ref{proppourtheofin}, to translate the problem into
computations
in  ordinary
self-products $S^N,\, N\leq (2n+1)n$, of a $K3$ surface; 2) the
relations listed in Theorem \ref{theorappelrelbeauvoi}; 3)  the
recent result of Yin \cite{yin}. The latter says basically that for
a regular surface $S$, the {\it cohomological} polynomial relations on
$S^N$ between the diagonal classes and the pull-back under the
various projections of the class of a point are generated by the
relations listed above (or rather, their cohomological counterpart)
and the Kimura relation (cf. \cite{kimura},
\cite[3.2.3]{voisinbook}) which holds when the motive of $S$ is
finite dimensional. A key point of the proof will be  thus
 the fact that the Kimura relation is not needed to express the pull-back to
 $S^N$ of the vanishing relation
 $[\Gamma^{2n+1}(X,o_X)]=0$.

We first recall some notation related to $S^n$ and $S^{[n]}$, for
any smooth surface $S$. Let $\mu=\{A_1,\ldots,A_l\},\,l=:l(\mu)$ be
a partition of $\{1,\ldots,n\}$, where all the $A_i$'s are nonempty.
Let $S^\mu\cong S^{l(\mu)}\subset S^n$ be the set $$\{(s_1,\ldots,
s_n),\,s_i=s_j\,\,{\rm if}\,\,i,\,j\in A_k\,\,{\rm
for\,\,some}\,\,k\}.$$ The image $\overline{S^{(\mu)}}$ of $S^\mu$
in $S^{(n)}$ is a stratum of $S^{(n)}$. It is not normal in general, but  its
normalization $S^{(\mu)}$ is the quotient of $S^\mu$ by the subgroup
$\mathfrak{S}_\mu$ of $\mathfrak{S}_n$ preserving $S^\mu$, that is
acting on $\{1,\ldots,n\}$ by permuting the $A_i$'s with the same
cardinality.
 Let $c: S^{[n]}\rightarrow S^{(n)}$ be
the Hilbert-Chow morphism and let $E_\mu:=S^\mu\times_{S^{(n)}}
S^{[n]}\subset S^\mu\times S^{[n]}$. It is known that $E_\mu$ is
irreducible of dimension $n+l(\mu)$. We see $E_\mu$ as a
correspondence between $S^\mu$ and $S^{[n]}$.
\begin{theo} (de Cataldo-Migliorini \cite{decami}) \label{dedecami} The collection  $(E_\mu)_\mu$ of
correspondences identifies the motive of $S^{[n]}$ to a submotive of
the disjoint union  $\sqcup_\mu S^\mu$. More precisely, for some
combinatorial coefficients $\lambda_\mu$,
$$\Delta_X=\sum_\mu\lambda_\mu(E_{\mu},E_\mu)_*(\Delta_{S^\mu})\,\,{\rm
in}\,\,{\rm CH}_{2n}(X\times X).$$
\end{theo}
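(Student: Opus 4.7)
The plan is to prove the decomposition by exploiting the semismall nature of the Hilbert--Chow morphism $c:S^{[n]}\rightarrow S^{(n)}$ together with the stratification of $S^{(n)}$ by partition type. First I would record the two basic geometric facts that drive the argument: by Brian\c{c}on's theorem the punctual Hilbert scheme parametrizing length-$k$ subschemes supported at a single point of $S$ has dimension $k-1$, so $c$ is semismall and each incidence variety $E_{\mu}=S^{\mu}\times_{S^{(n)}}S^{[n]}$ is irreducible of dimension $n+l(\mu)$. In particular, viewing $E_{\mu}\subset S^{\mu}\times S^{[n]}$ as a correspondence of the correct dimension to act on cycles, its transpose ${}^{t}E_{\mu}\subset S^{[n]}\times S^{\mu}$ has pushforward on cycles that is an honest map of Chow groups/motives.

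The main step is to analyze the self-correspondence
\begin{equation*}
P_{\mu}:=(E_{\mu},E_{\mu})_{*}(\Delta_{S^{\mu}})=E_{\mu}\circ{}^{t}E_{\mu}\in\mathrm{CH}_{2n}(S^{[n]}\times S^{[n]}).
\end{equation*}
By construction, $P_{\mu}$ is supported on the closed subset of $S^{[n]}\times S^{[n]}$ consisting of pairs $(\xi,\xi')$ with $c(\xi)=c(\xi')$ lying in the closure of the stratum $\overline{S^{(\mu)}}\subset S^{(n)}$. More precisely, restricted to the preimage of the open stratum $S^{(\mu)}_{0}$, a computation in the \'etale-locally trivial fibration shows that $P_{\mu}$ picks out (up to a combinatorial multiplicity coming from the action of $\mathfrak{S}_{\mu}$ and the length of the local Hilbert scheme fibers) the relative diagonal of $S^{[n]}$ over $S^{(n)}$. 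Over deeper strata indexed by partitions $\nu$ refining $\mu$, however, $P_{\mu}$ picks up excess contributions whose multiplicities can be computed via the semismallness and Brian\c{c}on's dimension count.

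With this in hand, I would argue by descending induction on the partial order of partitions (with the total partition $\{\{1\},\ldots,\{n\}\}$ at the top and the trivial partition $\{\{1,\ldots,n\}\}$ at the bottom). Starting from $P_{\{\{1\},\ldots,\{n\}\}}$, which restricted to the open dense locus gives the diagonal $\Delta_{X}$, I would inductively subtract off the excess contributions supported over deeper strata by adding suitable multiples $\lambda_{\nu}P_{\nu}$ of the correspondences for coarser partitions. The coefficients $\lambda_{\mu}$ are then the entries of the inverse of an explicit matrix of intersection numbers indexed by partitions, which is triangular with respect to the refinement order and has invertible diagonal (coming from the punctual Hilbert scheme dimension calculation); this is the M\"obius-type inversion underlying the formula.

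The main obstacle will be Step 2, namely the explicit computation of the excess intersection of $P_{\mu}$ with the strata of $S^{[n]}\times_{S^{(n)}}S^{[n]}$ indexed by refinements of $\mu$. This requires a careful use of the decomposition theorem of Beilinson--Bernstein--Deligne in the semismall setting (de Cataldo--Migliorini) and the fact that the fibers of $c$ over points of $S^{(\mu)}_{0}$ are products of punctual Hilbert schemes, together with the well-known irreducibility of these punctual Hilbert schemes. Once these intersection multiplicities are tabulated, the inversion giving the $\lambda_{\mu}$'s is a purely combinatorial matter on the partition lattice.
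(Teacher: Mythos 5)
A preliminary remark: the paper does not prove this statement at all. Theorem \ref{dedecami} is imported verbatim from de Cataldo--Migliorini \cite{decami} and used as a black box, so there is no in-paper proof to compare yours against; the only meaningful comparison is with the argument in \cite{decami} itself. Measured against that, your sketch reconstructs the right strategy: semismallness of the Hilbert--Chow morphism via Brian\c{c}on's dimension count, irreducibility of the punctual Hilbert schemes forcing each $E_\mu$ (and each top-dimensional component of $X\times_{S^{(n)}}X$ over a stratum) to be irreducible of the correct dimension $n+l(\mu)$, properness of the relevant intersections so that $P_\mu=E_\mu\circ{}^tE_\mu$ is an honest $2n$-cycle supported on $X\times_{S^{(n)}}X$, and then a triangular system over the partition lattice inverted to express $\Delta_X$ (which is the unique $2n$-dimensional component over the open stratum) in terms of the $P_\mu$. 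This is essentially how de Cataldo and Migliorini proceed, except that they prove the stronger statement that the full matrix of compositions ${}^tE_\nu\circ E_\mu$ is triangular with invertible diagonal, giving an isomorphism of motives rather than just the single identity for $\Delta_X$.

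Two points deserve flagging. First, your description of $P_\mu$ over the open part of $\overline{S^{(\mu)}}$ as picking out ``the relative diagonal of $S^{[n]}$ over $S^{(n)}$'' is imprecise: for $\mu$ not the finest partition, $P_\mu$ there is a multiple of the full fibre product $c^{-1}(S^{(\mu)}_0)\times_{S^{(\mu)}_0}c^{-1}(S^{(\mu)}_0)$, i.e.\ the whole $2n$-dimensional component $V_\mu$, not the image of the diagonal embedding; only for the finest partition do the two coincide (and there $P_{\mu_0}$ equals $n!\,\Delta_X$ plus terms over deeper strata, fixing $\lambda_{\mu_0}=1/n!$). Second, the step you yourself identify as the main obstacle --- tabulating the multiplicities $a_{\mu\nu}$ with which $P_\mu$ meets the components $V_\nu$ for $\nu$ coarser than $\mu$, and checking $a_{\mu\mu}\neq 0$ --- is where all the actual work lies, and it is asserted rather than carried out; note that for the bare identity $\Delta_X=\sum_\mu\lambda_\mu P_\mu$ one does not need the decomposition theorem, only the dimension counts and the uniqueness of the component $V_\nu$ over each stratum, so that part of your appeal to BBD is heavier machinery than required. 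As a blind reconstruction of the cited result the outline is sound; as a proof it is a plan with the decisive computation deferred.
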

The result above implies in particular: \begin{coro}\label{coroinj}
Let $X:=S^{[n]}$. For any integer $k$, the map
$$\oplus_{(\mu_1,\ldots,\mu_k)}(E_{\mu_1},\ldots,E_{\mu_k})^*:{\rm CH}^*(X^k)\rightarrow
\oplus_{(\mu_1,\ldots,\mu_k)}{\rm CH}^*(S^{\mu_1}\times\ldots\times
S^{\mu_k})$$ is injective.
\end{coro}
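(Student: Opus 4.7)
The statement is a direct formal consequence of Theorem \ref{dedecami}. The key preliminary observation is that, for any variety $A$ and correspondence $\Gamma \subset A \times X$, the cycle $(\Gamma, \Gamma)_*(\Delta_A) \in {\rm CH}(X \times X)$ coincides with the composed correspondence $\Gamma \circ {}^t\Gamma$, and hence acts on ${\rm CH}^*(X)$ by $\alpha \mapsto \Gamma_*(\Gamma^*\alpha)$. This is a routine computation using the definition of composition of correspondences and the projection formula.

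The plan is first to upgrade Theorem \ref{dedecami} to an analogous decomposition of $\Delta_{X^k}$. This is obtained by taking the external product of $k$ copies of the identity in Theorem \ref{dedecami} and reordering the $2k$ factors of $(X \times X)^k$ to identify it with $X^k \times X^k$, so that the external product of the diagonals $\Delta_X$ becomes $\Delta_{X^k}$. Expanding the product on the right-hand side yields
$$\Delta_{X^k} = \sum_{(\mu_1, \ldots, \mu_k)} \lambda_{\mu_1} \cdots \lambda_{\mu_k}\, (E_{\mu_1} \times \cdots \times E_{\mu_k},\, E_{\mu_1} \times \cdots \times E_{\mu_k})_*(\Delta_{S^{\mu_1} \times \cdots \times S^{\mu_k}})$$
in ${\rm CH}(X^k \times X^k)$, where $E_{\mu_1} \times \cdots \times E_{\mu_k}$ is viewed as a correspondence from $S^{\mu_1} \times \cdots \times S^{\mu_k}$ to $X^k$.

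Now for any $\alpha \in {\rm CH}^*(X^k)$, the fact that $\Delta_{X^k}$ acts as the identity on ${\rm CH}^*(X^k)$, combined with the observation of the first paragraph applied to $\Gamma = E_{\mu_1} \times \cdots \times E_{\mu_k}$, gives
$$\alpha = \sum_{(\mu_1, \ldots, \mu_k)} \lambda_{\mu_1} \cdots \lambda_{\mu_k}\, (E_{\mu_1}, \ldots, E_{\mu_k})_*\bigl((E_{\mu_1}, \ldots, E_{\mu_k})^* \alpha\bigr).$$
Hence if every pullback $(E_{\mu_1}, \ldots, E_{\mu_k})^*\alpha$ vanishes, so does $\alpha$, which is precisely the injectivity claim. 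The argument is essentially formal; the only point requiring care is the factor bookkeeping needed to pass from Theorem \ref{dedecami} to its $k$-fold external version, and I do not expect any serious obstacle.
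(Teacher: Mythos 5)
Your argument is correct and is exactly the formal deduction the paper intends: the corollary is stated there without proof as an immediate consequence of Theorem \ref{dedecami}, and your verification (identifying $(\Gamma,\Gamma)_*(\Delta_A)$ with $\Gamma\circ{}^t\Gamma$, taking the $k$-fold external product of the decomposition of $\Delta_X$ to decompose $\Delta_{X^k}$, and using that $\Delta_{X^k}$ acts as the identity) is the standard way to make it precise. No gaps.
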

We now have the following result: Let $n$ and $k$ be fixed.
  Let us denote
by $\Delta_k\subset X^k$ the small diagonal of $X^k$, where
$X:=S^{[n]}$, for
 a smooth projective surface $S$.
\begin{prop} \label{proppourtheofin}  For any $k$-uple  $(\mu_1,\ldots,\mu_k)$ of partitions of
$\{1,\ldots,n\}$, there exists a universal (i.e. independent of $S$) polynomial $P_{\mu_\cdot}$ (in many
variables) with the following property: For any smooth quasi-projective surface
$S$,
$$(E_{\mu_1},\dots,E_{\mu_k})^*(\Delta_k)=
P_{\mu_\cdot}(pr_i^*c_2(S),\,pr_j^*(K_S),pr_{st}^*(\Delta_S))\,\,{\rm
in}\,\, {\rm CH}(S^{\mu_1}\times\ldots\times S^{\mu_k}) ,$$ where
the $pr_i$'s are the projections from $\prod_iS^{\mu_i}\cong S^N$ to
its factors (isomorphic to $S$), and the $pr_{st}$'s are the
projections from $\prod_iS^{\mu_i}$ to the products of two of its
factors (isomorphic to $S\times S$).
\end{prop}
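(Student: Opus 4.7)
The plan is to deduce Proposition \ref{proppourtheofin} from the general structure theorem on universally defined cycles on self-products of quasi-projective surfaces, Theorem \ref{theopourtheofin}, which will be sketched in subsection \ref{secuniv}. The first step is to observe that the cycle $Z_{\mu_\cdot}(S) := (E_{\mu_1}, \ldots, E_{\mu_k})^*(\Delta_k)$ on $S^{\mu_1} \times \cdots \times S^{\mu_k}$ is \emph{universally defined} in the appropriate sense, namely that its construction is functorial with respect to open immersions $U \hookrightarrow S$ between smooth quasi-projective surfaces. This is essentially formal: the small diagonal $\Delta_k \subset X^k$ and the Hilbert--Chow incidence correspondences $E_\mu \subset S^\mu \times S^{[n]}$ are both produced by canonical constructions on the Hilbert scheme and its symmetric products, and all of these constructions commute with restriction to Zariski opens in $S$.

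Once universality is in place, the second step is to apply Theorem \ref{theopourtheofin}, whose conclusion is that any universally defined cycle on $S^N$ admits an expression as a universal polynomial (independent of $S$) in the basic classes $pr_i^* c_2(S)$, $pr_j^* K_S$, $pr_{st}^* \Delta_S$. Applied to $Z_{\mu_\cdot}(S)$, this produces the polynomial $P_{\mu_\cdot}$ required by the proposition, completing the reduction. It may also be useful at an intermediate stage to observe that $\Delta_k = \prod_{i=2}^{k} p_{1i}^*\Delta_X$ in $\mathrm{CH}(X^k)$, which reduces the computation to understanding the individual pullbacks $(E_{\mu_1}, E_{\mu_i})^*\Delta_X$ and thus simplifies the verification of universality, but it is not strictly necessary once the structure theorem is available.

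The main obstacle of the overall argument is thus contained in Theorem \ref{theopourtheofin} itself, not in the reduction above. The anticipated strategy for that theorem is a stratification argument by the big diagonals of $S^N$: on the open complement of all diagonals, universality forces a universal cycle to reduce to a polynomial in the single-factor classes $pr_i^* c_2(S)$ and $pr_j^* K_S$ alone, since any cycle on a smooth quasi-projective surface which depends functorially on $S$ only through restriction to opens is generated by these two tautological classes (after passing to a sufficiently small open to kill Picard-type contributions). The restriction of a universally defined cycle to each diagonal stratum is then a lower-dimensional universal cycle, controlled by induction on $N$, and is glued back by a Gysin computation involving the conormal bundle of the diagonal, whose Chern classes are themselves polynomial in $c_2(S)$ and $K_S$. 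The delicate point is to check that these stratum-by-stratum contributions assemble into a single universal polynomial and that no genuinely new class is needed at any stratum; this is what Theorem \ref{theopourtheofin} codifies and whose complete proof is deferred to \cite{voisinuniversalcycles}.
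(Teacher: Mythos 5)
Your reduction is essentially the paper's own proof: one checks that $(E_{\mu_1},\dots,E_{\mu_k})^*(\Delta_k)$ is a universally defined cycle in the sense of Definition \ref{defiunivdef} and then invokes Theorem \ref{theopourtheofin}. One caveat: your verification of universality only addresses compatibility with open immersions $U\hookrightarrow S$, which is condition (ii) of Definition \ref{defiunivdef}; the definition also requires condition (i), namely that the cycle be given for \emph{every} smooth family $\mathcal{S}\to B$ compatibly with arbitrary base change $B'\to B$. This is the part the paper actually has to spell out: one forms the relative Hilbert scheme $\mathcal{X}=\mathcal{S}^{[n/B]}$, the relative small diagonals $\Delta_{k/B}(\mathcal{X})$, and the relative correspondences $E_{\mu_i}\subset \mathcal{S}^{\mu/B}\times_B\mathcal{X}$, and the base-change compatibility rests on the flatness of $E_{\mu_i}\to B$ (so that refined pullback of the correspondence commutes with specialization). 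Saying the construction is ``canonical'' gestures at this but does not establish it; you should make the family version explicit. Finally, your anticipated strategy for Theorem \ref{theopourtheofin} (stratification by big diagonals plus Gysin gluing) is not what the paper does: the sketch in Subsection \ref{secuniv} instead embeds surfaces as complete intersections in the Grassmannian $G(2,5)$ and uses the computation of the Chow groups of the universal family (Proposition \ref{leunivsurfd}) to generate everything by diagonals over ${\rm CH}(G^N)$. Since the theorem is black-boxed in the proof of the proposition, this discrepancy does not affect the validity of your reduction, but it should not be presented as the content of the cited result.
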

\begin{proof}
Proposition \ref{proppourtheofin} is a particular case of Theorem
\ref{theopourtheofin} whose proof will be sketched in Subsection
\ref{secuniv} and will be completed in \cite{voisinuniversalcycles}, because
the cycles $(E_{\mu_1},\dots,E_{\mu_k})^*(\Delta_k) \in {\rm CH}(S^N)
$ are clearly universally defined cycles in the sense of Definition \ref{defiunivdef}. Indeed, for any family $\mathcal{S}\rightarrow B$ of smooth quasi-projective surfaces,
 we can construct the smooth family of relative Hilbert schemes
$\mathcal{X}:=\mathcal{S}^{[n/B]}$ and its relative small diagonals
$$\Delta_{k/B}(\mathcal{X})\subset \mathcal{X}^{k/B}.$$
Then we have the relative correspondences $E_{\mu_i}\subset \mathcal{S}^{\mu/B}\times_B\mathcal{X}$,
which are proper over the first summand,
and we have thus the relative cycle
$$E_{\mu_\cdot}^*(\Delta_{k/B}(\mathcal{X}))\in{\rm CH}(\mathcal{S}^{[N/B]}),\,\,N=l(\mu_1)+\ldots+l(\mu_k),$$
 satisfying the functoriality properties stated in Definition
\ref{defiunivdef}, because the morphisms $E_{\mu_i}\rightarrow B$ are flat.
\end{proof}
\begin{rema} {\rm One may have the feeling that the canonical class
is not necessary in Proposition \ref{proppourtheofin}, as  set theoretically  one wants the set of
$(s_1,\ldots,s_k)\in S^{\mu_1}\times \ldots\times S^{\mu_k}$ such
that there is a subscheme $x\in S^{[n]}$ whose associated cycle is
$s_i$ (or rather its image in $S^{(n)}$) and this does not seem to
involve the intrinsic geometry of $S$, except for the
self-intersection of the diagonal, thus only $c_2$. In fact, due to
excess formulas, the canonical class actually appears, as the
simplest example shows: Let $X$ be $S^{[2]}$, $k=3$, and
$\mu_1=\mu_2=\mu_3$ be the partition of $\{1,2\}$ consisting of a
single set with $2$ elements. Then $E_{\mu_1}=E_{\mu_2}=E_{\mu_3}=E$
is the exceptional divisor of $S^{[2]}$ and we have
$$(E_{\mu_1},E_{\mu_2},
E_{\mu_3})^*(\Delta_3)=\Delta_*(p_*(E^2_{\mid E})),$$ where
$\Delta:S\rightarrow S^3$ is the diagonal inclusion, and
$p:E\rightarrow S$ is the natural map. But $p_*(E^2_{\mid E})\in
{\rm CH}^1(S)$ is a nonzero multiple of the canonical class of $S$.}
\end{rema}
\begin{rema}{\rm We proved in  \cite{voisinpamq}
 similar statement where instead of the small
diagonal, arbitrary polynomials in the Chern classes of the
tautological sheaf on $X^{[n]}$ and the Chern classes of the ideal
sheaf of the incidence correspondence in $S^{[n-1]}\times S$) are
considered;  the same kind of arguments used in {\it loc. cit.}, which are in fact borrowed from
\cite{EGL}, can  be applied to prove Proposition \ref{proppourtheofin}, but the proofs
are very intricated and lengthy and in fact
all these results can  also be obtained  as Proposition  \ref{proppourtheofin},
as a consequence
of Theorem
\ref{theopourtheofin}.
}
\end{rema}

We now show how  Theorem \ref{theoHK} follows from Proposition
\ref{proppourtheofin}.
\begin{proof}[Proof of Theorem \ref{theoHK}]   We have to prove the vanishing of
$\Gamma^{2n+1}(X,o_X)$, where
 $S$ is a smooth projective $K3$
surface and $X=S^{[n]}$. By Corollary \ref{coroinj}, it suffices to show that for
any $2n+1$-uple $(\mu_1,\ldots,\mu_{2n+1})$ of partitions of
$\{1,\ldots,n\}$, we have
\begin{eqnarray}\label{vanemustar}
(E_{\mu_1},\dots,E_{\mu_{2n+1}})^*(\Gamma^{2n+1}(X,o_X))=0\end{eqnarray}
in ${\rm CH}(S^{\mu_1}\times\ldots\times S^{\mu_{2n+1}})$. As
$\Gamma^{2n+1}(X,o_X)$ is a combination of cycles which up to
permutation of factors are of the form $\Delta_k\times o_X^{2n+1-k}$
and $E_\mu^*o_X=0$ if $\mu\not=\{\{1\},\ldots,\{n\}\}$, and is equal
to $n!(o_S,\ldots,o_S)$ if $\mu=\{\{1\},\ldots,\{n\}\}$, it follows
from Proposition \ref{proppourtheofin} that there exists a
polynomial $Q_{\mu_\cdot}$ (in many variables) with the following
property: For any smooth projective  surface $S$, and any point
$o_S\in S$,
\begin{eqnarray}
\label{eqEmustarQ}(E_{\mu_1},\dots,E_{\mu_{2n+1}})^*(\Gamma^{2n+1}(X,o_X))=
Q_{\mu_\cdot}(pr_i^*c_2(S),\,pr_j^*(K_S),\,pr_l^*o_S,\,pr_{st}^*(\Delta_S))
\end{eqnarray}
 in ${\rm CH}(S^{\mu_1}\times \ldots\times S^{\mu_{2n+1}})$.
 We know by \cite[Proposition 1.3]{ogrady} (see also Theorem \ref{theoog}, (i))
  that for any regular surface
 $S$, and any point $o_S\in S$,
$\Gamma^{2n+1}(X,o_X)$ is cohomologous to $0$, where $o_X$ is any
point of $X=S^{[n]}$ over $no_S\in S^{(n)}$. It follows that for
each $2n+1$-uple $(\mu_1,\ldots,\mu_{2n+1})$ as above, the cycle
$$(E_{\mu_1},\dots,E_{\mu_{2n+1}})^*(\Gamma^{2n+1}(X,o_X))$$ is
cohomologous to $0$ in $S^{\mu_1}\times\ldots\times S^{\mu_{2n+1}}$.
Hence the polynomial $Q_{\mu_\cdot}$ has the property that for a
regular surface $S$,
\begin{eqnarray}\label{eqvancohpol}
Q_{\mu_\cdot}(pr_i^*[c_2(S)],\,pr_j^*([K_S]),\,pr_l^*[o_S],\,pr_{st}^*([\Delta_S]))=0\,\,{\rm
in}\,\, H^*(S^{\mu_1}\times\ldots\times S^{\mu_{2n+1}},\mathbb{Q}).
\end{eqnarray}
Here the brackets denote the cohomology class of the corresponding
cycles. In this equation, we can of course replace $[c_2(S)]$
by $\chi_{top}(S)[o_S]$, with $\chi_{top}(S)$
determined by the polynomial relation (this is relation \ref{itemsansnom} in Theorem \ref{theorappelrelbeauvoi})
$[\Delta_S]^2=\chi_{top}(S)pr_1^*[o_S]\cup pr_2^*[o_S]$ in $H^4(S\times S,\mathbb{Q})$. We now follow \cite{voisinpamq} (see also \cite{yin}): The
cohomological version of the equations given in Theorem
\ref{theorappelrelbeauvoi} with $L=K_S$ holds on any smooth
projective surface with $b_1=0$, and if the canonical class
satisfies $[K_S]=0$ or $[K_S]^2\not=0$, one can  reduce  modulo
these relations  any polynomial expression in the variables
$$pr_i^*[pt],\,pr_j^*[K_S],\,pr_{st}^*[\Delta_S]$$
to a linear combination of monomials in the variables
$pr_i^*[pt],\,pr_j^*[K_S],\,pr_{st}^*[\Delta_S]^0$,
with the property that no index appears twice in the monomial. Here
the class $[\Delta_S]^0$ is the class
$$[\Delta_S]-pr_1^*[pt]-pr_2^*[pt]-\lambda pr_1^*[K_S]\cup pr_2^*[K_S],$$
where the coefficient $\lambda$, when $K_S\not=0$, is determined by
the relation $\lambda [K_S]^2=1$ (the class $[\Delta_S]^0\in
H^4(S\times S,\mathbb{Q})$) is the projector onto
$H^2(S,\mathbb{Q})^{{\perp}[K_S]}$).
Now, it is clear by K\"unneth decomposition that if
a linear combination of such monomials vanishes in $H^*(S^N,\mathbb{Q})=H^*(S,\mathbb{Q})^{\otimes N}$, then for fixed distinct indices $i_1,\ldots,i_m,\,j_1,\ldots,j_p,\,k_1,\ldots,k_q$, the
sum of such monomials of the form
 $$pr_{i_1}^*[pt]\cdot\ldots \cdot pr_{i_m}^* [pt]\cdot pr_{j_1}^*[K_S]\cdot\ldots\cdot
 pr_{j_p}^*[K_S]\cdot pr_{k_1}^*1_S\cdot\ldots\cdot pr_{k_q}^*1_S\cdot \prod_{s_1,t_1,\ldots,s_l,t_l}pr_{s_i,t_i}^*[\Delta_S]^0$$
  where  the indices $s_i,\,t_j$ exhaust the remaining indices, are all distinct and
  are different from
  the $i_s,\,j_s,\,k_s$,  has to be $0$.
This way, we reduced the problem to linear combinations of monomials of the form
 \begin{eqnarray}
 \label{eqmonomial}
 pr_{s_1t_1}^*[\Delta_S]^0\cdot\ldots\cdot
pr_{s_lt_l}^*[\Delta_S]^0 \end{eqnarray}
on $S^{2l}$,
 where no index is repeated.
We now have the following result due to Yin \cite{yin}: The ``Kimura
relation'' is a relation between monomials of the above type. It
says that, for $M={\rm dim}\,H^2(S,\mathbb{Q})^{{\perp}[K_S]}$, the
cohomology class of the projector onto
$\bigwedge^{M+1}H^2(S,\mathbb{Q})^{{\perp}[K_S]}\subset
H^{2M+2}(S^{M+1},\mathbb{Q})$ is $0$, which is obvious since
$\bigwedge^{M+1}H^2(S,\mathbb{Q})^{{\perp}[K_S]}=0$.
 The
class of this projector is the class \begin{eqnarray}
\label{eqkimura} \sum_{\sigma\in
\mathfrak{S}_{M+1}}\epsilon(\sigma)\prod_{i=1}^{M+1}pr_{i,M+1+\sigma(i)}^*[\Delta_S]^0\in
H^{4M+4}(S^{2M+2},\mathbb{Q}).
\end{eqnarray}
and the Kimura relation is thus the vanishing of (\ref{eqkimura}).
\begin{theo} (Yin, \cite{yin})\label{theoyin} For any integer $m$,
the relations in $H^*(S^m,\mathbb{Q})$ between the monomials
(\ref{eqmonomial}) with no repeated indices are generated by the
pull-back to $S^m$ of the Kimura relation via a projection (and a
permutation) $S^m\rightarrow S^{2N+2}$.

\end{theo}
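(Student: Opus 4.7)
The plan is to identify the monomials (\ref{eqmonomial}) with matching tensors in the transcendental cohomology and then apply the classical Second Fundamental Theorem (SFT) of invariant theory for the orthogonal group.

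First, under the K\"unneth isomorphism $H^*(S^m,\mathbb{Q}) \cong H^*(S,\mathbb{Q})^{\otimes m}$, the class $[\Delta_S]^0 \in H^4(S \times S,\mathbb{Q})$ lies entirely in the K\"unneth component $T \otimes T$, where $T := H^2(S,\mathbb{Q})^{\perp [K_S]}$ carries the non-degenerate symmetric form $q$ induced by cup product. Concretely $[\Delta_S]^0$ is the projector onto $T$, so under the self-duality $T^\vee \cong T$ given by $q$ it corresponds to the Casimir element $\gamma \in \mathrm{Sym}^2 T$. Hence for a partial matching $\pi$ on $\{1,\ldots,m\}$, the monomial $\prod_{(s,t)\in\pi} pr_{st}^*[\Delta_S]^0$ is the element of $H^*(S^m,\mathbb{Q})$ that places a copy of $\gamma$ on each matched pair of K\"unneth factors and the class $1 \in H^0(S,\mathbb{Q})$ on every unmatched factor.

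Monomials whose underlying matched set $I \subseteq \{1,\ldots,m\}$ differ lie in distinct fine K\"unneth summands, so any linear relation among them splits as a sum of independent relations, one for each matched set $I$. Fixing $I$ of size $2k$, we are reduced to describing the linear relations, inside $T^{\otimes I} \subset H^{4k}(S^m,\mathbb{Q})$, among the matching tensors $\gamma_\pi$ as $\pi$ ranges over perfect matchings of $I$. This is precisely the setting of invariant theory for the orthogonal group $O(T,q)$: the First Fundamental Theorem of Brauer--Weyl asserts that the $\gamma_\pi$ span the $O(T)$-invariants in $T^{\otimes 2k}$, and the SFT describes the relations among them. Setting $M := \dim T$, the only source of such relations is the vanishing of $\bigwedge^{M+1} T$, which via the Casimir translates into the antisymmetrization
\begin{equation*}
\sum_{\sigma \in \mathfrak{S}_{M+1}} \epsilon(\sigma) \prod_{i=1}^{M+1} \gamma_{i,\,M+1+\sigma(i)} = 0 \quad \text{in } T^{\otimes 2M+2},
\end{equation*}
which is exactly the Kimura relation (\ref{eqkimura}). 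Translating back through K\"unneth, every relation among matching monomials on $S^m$ arises from pulling this relation back along a projection $S^m \to S^{2M+2}$ selecting $2M+2$ of the matched indices (composed with a permutation of factors) and, when $|I| > 2M+2$, multiplying by a matching monomial on the remaining matched indices --- in other words, from the ideal generated by pullbacks of Kimura inside the ring of matching monomials.

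The main obstacle is establishing the SFT for $O(T,q)$ in this setting. The classical Brauer--Weyl statements are usually phrased for the split or definite case, but they extend to any non-degenerate symmetric form over a field of characteristic zero (here $\mathbb{Q}$), either by viewing $O(T,q)$ as a reductive group scheme and invoking the general invariant-theoretic framework, or by a purely diagrammatic argument via the Brauer algebra acting on $T^{\otimes 2k}$. A secondary technical point is to match sign and normalization conventions so that the abstract Pfaffian relation coincides on the nose with the Kimura relation in the form (\ref{eqkimura}).
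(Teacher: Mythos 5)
The paper offers no proof of this theorem: it is quoted from Yin's paper \cite{yin}, so there is nothing internal to compare your argument against. Your outline --- K\"unneth reduction to relations among matching tensors in $T^{\otimes 2k}$ with $T=H^2(S,\mathbb{Q})^{\perp[K_S]}$, followed by the Brauer--Weyl second fundamental theorem for $O(T,q)$, whose generating relation is the antisymmetrizer onto $\bigwedge^{M+1}T$, i.e.\ exactly the Kimura class (\ref{eqkimura}) --- is precisely the strategy of Yin's original proof, and the two points you flag as remaining work (validity of the SFT in its tensor/Brauer-diagram form for an indefinite nondegenerate form over $\mathbb{Q}$, which follows by extension of scalars to $\mathbb{C}$ and descent, and the normalization matching the Pfaffian/antisymmetrizer relation with (\ref{eqkimura})) are indeed the only genuine technical content to be supplied.
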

We deduce the following
\begin{coro}\label{corodelafinfin} The polynomial $Q_{\mu_\cdot}$ belongs to the ideal
generated by the trivial relations (see Remark \ref{rematrivialrel}), the relation $c_2(S)=\chi_{top}(S) o_S$ (where we
recover $\chi_{top}(S)$ as the self-intersection of $\Delta_S$) and
the relations listed in Theorem \ref{theorappelrelbeauvoi} with
$L=K_S$.
\end{coro}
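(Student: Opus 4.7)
The plan is to apply Yin's Theorem~\ref{theoyin} while specializing to a regular surface where the Kimura relation cannot contribute. By Proposition~\ref{proppourtheofin}, the polynomial $Q_{\mu_\cdot}$ is universal: for any smooth projective surface $S$ with $b_1=0$ and any degree one zero-cycle $o_S\in{\rm CH}_0(S)$, evaluating $Q_{\mu_\cdot}$ at $pr_i^*c_2(S),\,pr_j^*K_S,\,pr_l^*o_S,\,pr_{st}^*\Delta_S$ computes the class of $(E_{\mu_1},\dots,E_{\mu_{2n+1}})^*(\Gamma^{2n+1}(X,o_X))$, which vanishes cohomologically by Theorem~\ref{theoog}(i) since $q(X)=q(S)=0$. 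Consequently, for every regular surface $S$, $Q_{\mu_\cdot}$ evaluates to $0$ in $H^*(S^N,\mathbb{Q})$, where $N=l(\mu_1)+\cdots+l(\mu_{2n+1})$.

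The next step is to reduce $Q_{\mu_\cdot}$ modulo the ideal $I$ into a normal form. I would use relation~(1) of Theorem~\ref{theorappelrelbeauvoi} with $L=K_S$ to eliminate $K_S^2$, relation~(2) with $L=K_S$ to rewrite products $pr_{st}^*\Delta_S\cdot pr_s^*K_S$, relation~(4) to replace $\Delta_S^2$, relation~(5), that is $\Gamma^3(S,o_S)=0$, to eliminate triples of diagonals sharing an index, the chain rule in Remark~\ref{rematrivialrel} for pairs of diagonals sharing an index, the trivial identities $o_S\cdot L=0$ and $o_S^2=0$, the substitution $c_2(S)=\chi_{top}(S)\,o_S$, and finally the defining equation
\[
pr_{st}^*\Delta_S=pr_{st}^*[\Delta_S]^0+pr_s^*o_S+pr_t^*o_S+\lambda\, pr_s^*K_S\cdot pr_t^*K_S.
\]
This reduces $Q_{\mu_\cdot}$ modulo $I$ to a polynomial $\widetilde{Q}_{\mu_\cdot}$ which is a linear combination of monomials of the form
\[
pr_{i_1}^*o_S\cdots pr_{i_p}^*o_S\cdot pr_{j_1}^*K_S\cdots pr_{j_q}^*K_S\cdot pr_{s_1t_1}^*[\Delta_S]^0\cdots pr_{s_lt_l}^*[\Delta_S]^0
\]
in which no index is repeated. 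The K\"unneth decomposition of $H^*(S^N,\mathbb{Q})=H^*(S,\mathbb{Q})^{\otimes N}$ then splits the cohomological vanishing of $\widetilde{Q}_{\mu_\cdot}$ into independent vanishings indexed by the assignment of each of the $N$ factors to either an $o_S$-slot, a $K_S$-slot, a trivial $1_S$-slot, or a $[\Delta_S]^0$-pair; for each such assignment the problem becomes a cohomological vanishing of a linear combination of monomials of the form \eqref{eqmonomial} on some $S^{2l}$ with no repeated index.

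The decisive step is to invoke Theorem~\ref{theoyin} in a setting where Kimura is not available. I would specialize to a regular surface $S_0$ (for instance $\mathbb{P}^2$ blown up at many general points, so that $b_1(S_0)=0$ and the Picard rank is arbitrarily large) chosen so that $M_0:=\dim H^2(S_0,\mathbb{Q})^{\perp[K_{S_0}]}$ strictly exceeds $N/2-1$; this guarantees $2l<2M_0+2$ for every $2l\leq N$. For such $S_0$, there is no projection $S_0^{2l}\to S_0^{2M_0+2}$ along which a Kimura relation could be pulled back, so by Theorem~\ref{theoyin} the monomials \eqref{eqmonomial} with no repeated index are linearly independent in $H^*(S_0^{2l},\mathbb{Q})$. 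Combined with the vanishing of $\widetilde{Q}_{\mu_\cdot}$ on $S_0$ from the first step, each coefficient of $\widetilde{Q}_{\mu_\cdot}$ is zero, so $\widetilde{Q}_{\mu_\cdot}=0$ as a formal polynomial, which is exactly the statement $Q_{\mu_\cdot}\in I$. The main obstacle I anticipate is the systematic bookkeeping in the normal-form reduction: one has to track carefully how coefficients depend on the numerical invariants $\chi_{top}(S)$ and $\deg K_S^2$, and it is essential that the elimination of triples of diagonals sharing an index really can be accomplished inside $I$ via relation~(5), since that relation is precisely what allows the Chow-theoretic lift of the whole argument through Theorem~\ref{BV}.
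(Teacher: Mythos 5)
Your proposal is correct and follows essentially the same route as the paper: reduce $Q_{\mu_\cdot}$ modulo the ideal to monomials with no repeated index, use the K\"unneth decomposition to separate the vanishings, and then evaluate on a regular surface with $b_2$ large enough that $2M+2$ exceeds $N$, so that Yin's theorem yields linear independence of those monomials (the Kimura relation being out of range). Your explicit choice of a blow-up of $\mathbb{P}^2$ at many general points is just a concrete instance of the paper's condition $b_2(S)>\frac{n(2n+1)}{2}$.
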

\begin{proof} Indeed, choose for $S$ a smooth projective surface
with $b_1(S)=0$ and $b_2(S)> \frac{n(2n+1)}{2}$. Then by Theorem
\ref{theoyin}, there are no linear relations between the  monomials
(\ref{eqmonomial}) with no repeated index if  $s\leq (2n+1)n$. On
the other hand, we have the vanishing of the cohomology class $$
Q_{\mu_\cdot}(pr_i^*[c_2(S)],\,pr_j^*[K_S],\,pr_l^*[o_S],\,pr_{st}^*[\Delta_S])
\in H^*(S^N,\mathbb{Q}),$$ where $N=\sum_il(\mu_i)\leq (2n+1)n$.
It
then follows from the above reduction that the polynomial
$Q_{\mu_\cdot}$, where one substitutes $\chi_{top}(S)[o_S]$ to
$[c_2(S)]$, belongs to the ideal generated by the cohomological
version of the relations given in Theorem
\ref{theorappelrelbeauvoi}, with $L=K_S$.
\end{proof}
The proof of Theorem \ref{theoHK} is now finished. Indeed, $S$ being
now a $K3$ surface, we know by Theorem \ref{theorappelrelbeauvoi}
that the relation $\chi_{top}(S)o_S=c_2(S)$ holds in $CH_0(S)$ and
that the relations listed in Theorem \ref{theorappelrelbeauvoi} hold
in ${\rm CH}(S^k)$ for adequate $k$. As the polynomial
$Q_{\mu_\cdot}$, where one substitutes $\chi_{top}(S)o_S$ to
$c_2(S)$, belongs to the ideal generated by the relations given in
Theorem \ref{theorappelrelbeauvoi} and the trivial relations, we conclude that
$Q_{\mu_\cdot}=0$ in ${\rm CH}(S^N)$. By (\ref{eqEmustarQ}), we proved the vanishing
(\ref{vanemustar})
$$(E_{\mu_1},\dots,E_{\mu_{2n+1}})^*(\Gamma^{2n+1}(X,o_X))=0\,\,{\rm
in}\,\,{\rm CH}(S^N),$$ which
concludes the proof.
\end{proof}
\subsection{Universally defined cycles \label{secuniv} }

This subsection is devoted to introducing the notion of ``universally defined
 cycles'' and to sketching the proof of a quite general statement which  will be fully proved in \cite{voisinuniversalcycles}. It concerns ``universally defined" cycles on self-products of surfaces.
We first explain the meaning of this expression. In the following, we work with Chow groups
with integral coefficients, and we will write
${\rm CH}(X)_\mathbb{Q}$ for cycles with $\mathbb{Q}$-coefficients.
\begin{Defi}\label{defiunivdef} Let $n,\,N$ be integers. A universally defined cycle on the $N$-th power of
smooth complex algebraic varieties $X$ of a given dimension $n$ consists in the following data:   for
each smooth family of $n$-dimensional algebraic varieties  $\mathcal{X}\rightarrow B$, where
$B$ is smooth quasiprojective,  a cycle $z_\mathcal{X}\in {\rm CH}(\mathcal{X}^{N/B})$ is given, satisfying
the following conditions:

(i)
If $r:B'\rightarrow B$ is a morphism, with induced morphism
$$R_N:(\mathcal{X}')^{N/B'}\rightarrow
\mathcal{X}^{N/B},\,\mathcal{X}':=\mathcal{X}\times_BB',$$ then
$$ z_{\mathcal{X}'}=R_N^*z_{\mathcal{X}}\,\,{\rm in}\,\,{\rm CH}((\mathcal{X}')^{N/B'}).$$

(ii) If $\mathcal{X}\rightarrow B$ is a family as above and
$\mathcal{Y}\subset \mathcal{X}$ is a Zariski open set, then
$$z_\mathcal{Y}=z_{\mathcal{X}\mid \mathcal{Y}^{N/B}}\,\,{\rm in}\,\,{\rm CH}(\mathcal{Y}^{N/B}).$$
\end{Defi}
\begin{theo}\label{theopourtheofin}  For any universally defined cycle $z$ on $N$-th powers of surfaces,
there exists a polynomial $P$ with rational coefficients, depending only on $z$, such that
for any smooth algebraic surface
$S$ defined over $\mathbb{C}$, $$z_S=P(pr_i^*c_1(S),pr_j^*c_2(S),pr_{rs}^*\Delta_S)\,\,{\rm in}\,\,{\rm CH}(S^N)_\mathbb{Q}.$$

\end{theo}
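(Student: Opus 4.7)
My plan is to combine an excision/induction reduction with a Franchetta-type analysis on a sufficiently rich family of surfaces. The first move removes the diagonals by induction on $N$; the second identifies the remaining ``configuration space'' part by universality.

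\textbf{Reduction by excision.} Let $U := S^N \setminus \bigcup_{i<j}\Delta_{ij}(S)$ be the open configuration space, with complement the union of the big diagonals $\Delta_{ij}$. The localization sequence expresses $z_S$ as the restriction $z_S|_U$ plus pushforwards along the closed immersions $\Delta_{ij}:S^{N-1}\hookrightarrow S^N$. By property (ii) of Definition \ref{defiunivdef}, the restriction $z_S|_U$ is itself a universally defined cycle on the $N$-th configuration space of smooth surfaces; by naturality with respect to the inclusion $\Delta_{ij}$, each term supported in a big diagonal is the pushforward of a universally defined cycle on $S^{N-1}$. By induction on $N$, these pushed-forward cycles are polynomials in the tautological classes on $S^{N-1}$, and $\Delta_{ij*}$ sends such a polynomial to a polynomial on $S^N$ (using that the normal bundle of $\Delta_{ij}$ is tangentially supported, hence its Chern classes are polynomials in $pr_i^*c_1(S),pr_j^*c_2(S)$). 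We are thus reduced to showing the theorem for universally defined cycles on the configuration space $U$, where only the pull-backs of the Chern classes of $S$ are allowed.

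\textbf{Franchetta-type analysis on a large family.} Take $\mathcal{X}\to B$ to be the universal family of smooth complete intersection surfaces of sufficiently large multidegree in a large projective space. Over $B$, the relative configuration space $\mathcal{U} := F(\mathcal{X}/B,N) \subset \mathcal{X}^{N/B}$ carries a relative universally defined cycle $w_\mathcal{X}$. A Leray/monodromy analysis, using the big image of $\pi_1(B)$ acting on the primitive cohomology of a fiber, shows that the cohomology class of $w_\mathcal{X}$ over the generic point $\eta\in B$ is monodromy invariant and hence a polynomial, with $\mathbb{Q}$-coefficients, in the relative tangent Chern classes $pr_i^*c_1(T_{\mathcal{X}/B}), pr_i^*c_2(T_{\mathcal{X}/B})$ together with classes pulled back from $B$; the classes from $B$ can be absorbed into the polynomial since we shall ultimately restrict to fibers. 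Upgrading from cohomology to $\mathbb{Q}$-rational equivalence on $\mathcal{U}_\eta$ is the essential step and relies on a generic-Franchetta type input: for a very general complete intersection surface $S$, any cycle on $U\subset S^N$ that is cohomologous to a polynomial in the $pr_i^*c_1(S), pr_i^*c_2(S)$ actually coincides with it in ${\rm CH}(U)_{\mathbb{Q}}$. This produces a universal polynomial $P$ with rational coefficients such that $w_{\mathcal{X}_\eta} = P(\text{tautological Chern classes})$ in ${\rm CH}(\mathcal{U}_\eta)_{\mathbb{Q}}$, and by property (i) the identity spreads to every fiber over a Zariski-dense open of $B$.

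\textbf{Extension to all smooth surfaces.} Any smooth complex algebraic surface $S_0$ can be covered by Zariski-affine opens $V$ each of which embeds as an open subset of a smooth complete intersection surface in the above family; property (ii) transfers the identity $z_V = P(\text{tautological classes on }V)$ to each $V^N \subset S_0^N$. Since the cycle $z_{S_0} - P(\text{tautological classes on }S_0)$ restricts to zero on the open cover $\{V^N\}$ of $S_0^N$ by such products, and any cycle on $S_0^N$ is determined by its restriction to this cover (any irreducible subvariety of $S_0^N$ has a generic point lying in some $V^N$), the difference vanishes in ${\rm CH}(S_0^N)_{\mathbb{Q}}$, proving the theorem.

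\textbf{Main obstacle.} The hard part is the second step, and specifically the passage from cohomology to rational Chow on the relative configuration space: a purely cohomological identity follows from monodromy, but controlling it at the level of Chow groups requires a generic-Franchetta type vanishing for universally defined cycles on configuration spaces of surfaces. This is where the hypothesis of $\mathbb{Q}$-coefficients is essential and where the core content of \cite{voisinuniversalcycles} must lie.
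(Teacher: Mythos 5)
Your strategy has three genuine gaps, and the last one is fatal as written. First, the excision step: the localization sequence only tells you that $z_S$ minus \emph{some} extension of $z_S|_U$ is supported on the big diagonals; it does not produce a canonical, functorial lift of that difference to a universally defined cycle on $S^{N-1}$. There is no preferred extension of $z_S|_U$ to $S^N$, and the correction term is only well defined modulo the kernel of the pushforward, so the inductive reduction to the configuration space is not justified. Second, the Franchetta-type input you invoke is both unproved and misstated: ``any cycle on $U\subset S^N$ cohomologous to a polynomial in the $pr_i^*c_1(S),pr_i^*c_2(S)$ coincides with it in ${\rm CH}(U)_\mathbb{Q}$'' is false for a single very general surface (already for $N=1$ and $0$-cycles on a surface with $p_g>0$); the correct statement must concern cycles that are restrictions of cycles on the total space of the universal family, and proving \emph{that} is exactly the content of the theorem, not an available input. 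Third, your final descent step relies on the claim that a cycle on $S_0^N$ restricting to zero on each member of a Zariski open cover is zero. This is false: on an elliptic curve $E$ with $p-q$ torsion of order $n$, the cycle $z=n[p]=n[q]$ is nonzero (even with $\mathbb{Q}$-coefficients, by degree) yet restricts to zero on both $E\setminus\{p\}$ and $E\setminus\{q\}$, which cover $E$. Chow groups do not satisfy Zariski descent in this naive sense, so the passage from ``true on a dense open set'' to ``true on $S_0$'' cannot be done by covering arguments alone.

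The paper's route is quite different and addresses each of these points. It proves directly (Proposition \ref{leunivsurfd}), with no monodromy and no cohomological intermediary, that for the universal family of complete intersection surfaces of large degree $d$ in $G(2,5)$ the relative Chow groups of $\mathcal{S}_d^{N/B}$ are generated over ${\rm CH}(G^N)$ by the relative partial diagonals, via the localization sequence and $\mathbb{A}^1$-invariance applied to the stratification of $G^N$ by partial diagonals (the fibers of $\mathcal{S}_{d,univ}^{N/B}\to G^N$ over each stratum being open subsets of vector bundles). The diagonals are thus kept as generators rather than excised, which avoids your first gap and yields Corollary \ref{corpourthefinsd} for complete intersections. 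The extension to arbitrary surfaces is then the hard part: the paper realizes a dense open subset of any surface inside the smooth locus of a complete intersection, controls the correction term supported on the boundary curve $C$ (which lies in an explicit linear system and is itself analyzed as a universally defined divisor class), and uses the freedom in $d$, $E$ and auxiliary blow-ups, together with divisibility arguments in the \emph{integral} Chow group, to kill the ambiguity --- the paper explicitly notes that the integral structure is essential, precisely because there is no descent principle of the kind you assume. You should look at the $N=1$ case (Proposition \ref{projusteS}) to see how much work the boundary term requires even there.
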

\begin{rema}{\rm One could introduce as well
universally defined cycles with $\mathbb{Q}$-coefficients, by replacing everywhere in the definition
above ${\rm CH}$ by ${\rm CH}_\mathbb{Q}$.
It is possible that the conclusion holds as well for universally defined cycles
with $\mathbb{Q}$-coefficients, but our present proof uses the integral structure.
}
\end{rema}
We will give some hints on the proof, with a complete proof only in the case $N=1$
(Proposition \ref{projusteS}) and the construction of the desired polynomials
(Corollary \ref{corpourthefinsd} and Proposition \ref{coroonnesaitjamais}). We refer
to \cite{voisinuniversalcycles} for a full treatment.
 Let us
 first show  how to produce such  polynomials.
Let $G:=G(2,5)$ be the Grassmannian of $2$-dimensional vector subspaces
in $\mathbb{C}^5$. Any smooth complex projective surface can be embedded in $G$,
for example by choosing $5$ general sections of a very ample vector bundle $E$ on $S$.
Let $\mathcal{O}_G(1)$ be the Pl\"ucker line bundle on $G$, and let $c\in {\rm CH}^2(G)$ be the second
Chern class of the tautological rank $2$ vector bundle on $G$.
We choose an integer $d$, and consider the universal family
$\mathcal{S}_d\rightarrow B$ of smooth surfaces in $G$ which are
complete intersections of $4$ members of $|\mathcal{O}_G(d)|$.
The smooth variety $B$ is thus
the  vector space $H^0(G,\mathcal{O}_G(d))^4$ and
$$\mathcal{S}_d\subset \mathcal{S}_{d,univ}$$
is the Zariski open set consisting of points where
$\mathcal{S}_{d,univ}\rightarrow B$ is smooth.
Here
$$\mathcal{S}_{d,univ}:=\{(b,x)\in B\times G,\,b=(f_{1,b},\ldots,f_{4,b}),\,f_{i,b}(x)=0\,\,\forall i\}.$$
There is an obvious morphism
$$f:\mathcal{S}_d\rightarrow G$$
given by the restriction to  $\mathcal{S}_d$ of the  second projection
$\mathcal{S}_{d,univ}\rightarrow G$,
which induces for any $N\geq 1$ the morphism
$$f_N: \mathcal{S}_d^{N/U}\rightarrow G^N$$
with induced pull-back morphism
$f_N^*:{\rm CH}(G^N)\rightarrow
{\rm CH}(\mathcal{S}_d^{N/U})$.
We now use the following result, which is one of the main ingredients in the proof of Theorem
\ref{theopourtheofin}:
\begin{prop} \label{leunivsurfd} For any integer $N>0$ and
sufficiently large $d$,
${\rm CH}(\mathcal{S}_d^{N/U})$ is generated as a ${\rm CH}(G^N)$-module by
 the relative partial diagonals
$\Delta_{I/U}(\mathcal{S}_d)$.
\end{prop}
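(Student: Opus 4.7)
The plan is to argue by induction on $N$, exploiting that by construction $\mathcal{S}_{d,univ}\to G$ is a vector subbundle of the trivial bundle $U\times G\to G$: its fibre over $x\in G$ is the linear subspace of $U=H^0(G,\mathcal{O}_G(d))^4$ consisting of $4$-tuples vanishing at $x$. For $N=1$, homotopy invariance of Chow groups for vector bundles yields an isomorphism ${\rm CH}(G)\xrightarrow{\sim}{\rm CH}(\mathcal{S}_{d,univ})$, and since $\mathcal{S}_d\subset\mathcal{S}_{d,univ}$ is Zariski open the restriction map on Chow groups is surjective, proving the base case.

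For the inductive step I would stratify $G^N$ by diagonal incidence type. For each set partition $I=\{I_1,\ldots,I_l\}$ of $\{1,\ldots,N\}$, let $G^N_I\subset G^N$ be the locally closed stratum where $x_i=x_j$ iff $i\sim_I j$, identified with the complement of the big diagonals in $G^l$. Taking $d$ large enough that $\mathcal{O}_G(d)$ is $N$-very ample, the projection $q_N\colon\mathcal{S}_{d,univ}^{N/U}\to G^N$ restricts over $G^N_I$ to a vector bundle of rank $\dim U-4l$ (the fibre is the space of $u\in U$ imposing vanishing at $l$ prescribed distinct points of $G$), and hence ${\rm CH}(q_N^{-1}(G^N_I))\cong{\rm CH}(G^N_I)$ via $q_N^*$. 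The closure of this stratum is the small diagonal $\delta_I(G^l)\cong G^l$, whose $q_N$-preimage equals the image of $\iota_I\colon\mathcal{S}_{d,univ}^{l/U}\hookrightarrow\mathcal{S}_{d,univ}^{N/U}$, with fundamental class precisely $\Delta_{I/U}(\mathcal{S}_{d,univ})$.

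The key move is then to iterate the localization exact sequence over the refinement poset of partitions, combined with the induction hypothesis. A generator of ${\rm CH}(\mathcal{S}_{d,univ}^{l/U})$ has, by induction applied to $l<N$, the form $q_l^*\alpha\cdot\Delta_{J/U}(\mathcal{S}_{d,univ})$ for some $\alpha\in{\rm CH}(G^l)$ and partition $J$ of $\{1,\ldots,l\}$; rewriting this as $\iota_{J*}(q_m^*\delta_J^*\alpha)$ via the projection formula (with $m=l(J)$), its further pushforward along $\iota_I$ equals $\iota_{K*}(q_m^*\delta_J^*\alpha)$, where $K$ is the partition of $\{1,\ldots,N\}$ obtained by coarsening $I$ according to $J$. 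Because ${\rm CH}(G)$ has a finite Schubert basis, one can produce a lift $\tilde\alpha\in{\rm CH}(G^N)$ with $\delta_K^*\tilde\alpha=\delta_J^*\alpha$, and the projection formula applied once more --- using the tautological identity $q_N\circ\iota_K=\delta_K\circ q_m$ --- yields
\[\iota_{K*}(q_m^*\delta_J^*\alpha)=\iota_{K*}(\iota_K^*q_N^*\tilde\alpha)=q_N^*\tilde\alpha\cdot\Delta_{K/U}(\mathcal{S}_{d,univ}),\]
so the generator indeed lies in the $q_N^*{\rm CH}(G^N)$-module generated by the relative partial diagonals. Restricting finally from $\mathcal{S}_{d,univ}^{N/U}$ to its open subset $\mathcal{S}_d^{N/U}$ preserves the conclusion by surjectivity of restriction of Chow groups.

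The main difficulty I expect is technical bookkeeping rather than a conceptual barrier: one has to verify that the stratum-by-stratum vector bundle structure is genuinely uniform (forcing $d$ to grow with $N$), iterate the localization exact sequence consistently with respect to the refinement poset on partitions, and confirm the surjectivity of $\delta_K^*\colon{\rm CH}(G^N)\to{\rm CH}(G^m)$ (which follows from ${\rm CH}(G)$ being free on its Schubert basis: any external product of Schubert classes on $G^m$ is the pullback under $\delta_K$ of the corresponding product on $G^N$ with the identity class inserted on the superfluous factors). Carrying out these verifications, together with the closing-up over each stratum boundary, is presumably why the full argument is deferred to \cite{voisinuniversalcycles}.
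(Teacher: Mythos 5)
Your argument is correct and is essentially the paper's own proof: both stratify $G^N$ by coincidence type of the points, use that for $d$ large the preimage of each stratum in $\mathcal{S}_{d,univ}^{N/B}$ is (an open set in) a vector bundle so that its Chow group is pulled back from the stratum, glue the strata with the localization exact sequence, and conclude with the projection formula together with the surjectivity of the restriction ${\rm CH}(G^N)\rightarrow{\rm CH}(\Delta_I(G))$. The only cosmetic difference is that you package the passage through the strata as an induction on $N$ via the identification of the preimage of $\delta_I(G^l)$ with $\mathcal{S}_{d,univ}^{l/B}$, whereas the paper treats all locally closed strata $\Delta_I^0(G)$ simultaneously in one application of the localization sequence.
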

Here $I$ denotes as usual a partition of $ \{1,\ldots,N\}$, determining a partial diagonal.
\begin{proof}[Proof of Proposition \ref{leunivsurfd}] By the localization exact sequence, it suffices
to prove the result with
$\mathcal{S}_d$ replaced by $\mathcal{S}_{d,univ}$.
Next consider the natural morphism
$$
f_N:\mathcal{S}_{d,univ}^{N/B}\rightarrow G^d.$$
The fiber of $f_N$ over a $N$-uple $(x_1,\ldots x_N)$ consists of the set of
$4$-uples $(\sigma_1,\ldots,\sigma_4)\in H^0(G,\mathcal{O}_G(d))^4$ having the property
that the $\sigma_i$'s  vanish at all points $x_i$.
As $d $  is large compared to $N$, any $k$ distinct points of $G$ with $k\leq N$ impose independent
conditions to $H^0(G,\mathcal{O}_G(d))$, and thus, denoting by $G^N_k$ the locally closed
subvariety  of $G^N$ consisting of $N$-uples with exactly $k$-distinct points,
which is the disjoint union of the diagonals $\Delta_I(G)$ with $l(I)=k$ (or rather of
the $\Delta_I^0(G):=\Delta_I(G)\setminus \cup_{J,l(J)<k}\Delta_J(G)$), we find that
$f_N^{-1}(G^N_k)$ is a Zariski open set in a vector bundle over $G^N_k$. It follows from the
localization exact sequence and $A^1$-invariance that ${\rm CH}( G^N_k)\stackrel{f_N^*}
{\rightarrow }
{\rm CH}(f_N^{-1}(G^N_k))$ is surjective.
Writing $G^N$ as the disjoint union of the $\Delta_I^0(G)$,
we conclude from the above and the localization exact sequence that
$$\oplus_I{\rm CH}(\Delta_I
(G))\stackrel{(j_{I*}\circ f_I^{*})}
{\rightarrow}
 {\rm CH}(\mathcal{S}_{d,univ}^{N/B})$$
is surjective, where $f_I$ is the restriction of
$f_N$ to $f_N^{-1}(\Delta_I(G))\subset G^N$ and $j_I$ is the inclusion
of $f_N^{-1}(\Delta_I(G))$ in $\mathcal{S}_d^{N/U}$. Note that
$f_I^{-1}(\Delta_I(G))=\Delta_{I/B}(\mathcal{S}_{d,univ})$.
 Finally, we observe that
the restriction map
$${\rm CH}(G^N)\rightarrow {\rm CH}(\Delta_I(G))$$
is surjective, and that for any $\alpha\in {\rm CH}(G^N)$,
$$j_{I*}
\circ f_I^*(\alpha_{\mid \Delta_I(G)})=f_N^*\alpha\cdot (j_{I*}
\circ f_I^*)(1)=f_N^*\alpha\cdot
\Delta_{I/B}(\mathcal{S}_{d,univ}),$$
and this finishes the proof.

\end{proof}
\begin{coro} \label{corpourthefinsd} For any universally defined cycle $z$ on $N$-th powers of surfaces and for sufficiently large $d$,
there exists a polynomial $P_d$ with rational coefficients, depending only on $z$ and $d$ such that
for any smooth complete intersection surface
$S_d\subset G$ as above,
\begin{eqnarray}
\label{eqpourPd}
z_{S_d}=P_d(pr_i^*c_1(S_d),pr_j^*c_2(S_d),pr_{rs}^*\Delta_{S_d})\,\,{\rm in}\,\,{\rm CH}(S_d^N)_\mathbb{Q}.
\end{eqnarray}
Furthermore, $(4d-5)^{2N}P_d$ has integral coefficients.
\end{coro}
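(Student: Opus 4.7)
The strategy is to combine Proposition \ref{leunivsurfd} with the adjunction formula for complete intersection surfaces in $G = G(2,5)$.

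By Proposition \ref{leunivsurfd}, for $d$ sufficiently large with respect to $N$, one may write
$$z_{\mathcal{S}_d} = \sum_I f_N^*(\alpha_I) \cdot \Delta_{I/U}(\mathcal{S}_d) \,\,\text{in}\,\,{\rm CH}(\mathcal{S}_d^{N/U}),$$
with integral $\alpha_I \in {\rm CH}(G^N)$, the sum running over partitions $I$ of $\{1,\ldots,N\}$. For any point $b$ in the parameter space with fiber $S_d$, property (i) of Definition \ref{defiunivdef} applied to $\{b\}\hookrightarrow U$ then yields
$$z_{S_d} = \sum_I j_N^*(\alpha_I) \cdot \Delta_I(S_d) \,\,\text{in}\,\,{\rm CH}(S_d^N),$$
where $j_N\colon S_d^N \hookrightarrow G^N$ is the inclusion induced by $S_d\subset G$.

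Since $G$ has a Schubert cell decomposition, ${\rm CH}^*(G^N) = {\rm CH}^*(G)^{\otimes N}$, and ${\rm CH}^*(G)$ is generated over $\mathbb{Z}$ by the Pl\"ucker class $\xi$ and $c := c_2(\mathcal{T})$, where $\mathcal{T}$ is the tautological rank-two subbundle. It therefore suffices to express $\xi|_{S_d}$ and $c|_{S_d}$ as polynomials in $c_1(S_d), c_2(S_d)$. The normal bundle of $S_d$ in $G$ is $\mathcal{O}_G(d)^{\oplus 4}|_{S_d}$, and on $G(2,5)$ one has $c_1(T_G) = 5\xi$ (the Fano index is $5$), so the Whitney formula $c(T_{S_d})(1+d\xi|_{S_d})^4 = c(T_G)|_{S_d}$ yields in codimension one
$$\xi|_{S_d} = -\frac{1}{4d-5}\, c_1(S_d),$$
and in codimension two expresses $c|_{S_d}$ as a rational combination of $c_1(S_d)^2$ and $c_2(S_d)$ whose denominator divides $(4d-5)^2$.

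Substituting these into the expression for $z_{S_d}$ produces the polynomial $P_d$ in the variables $pr_i^*c_1(S_d), pr_j^*c_2(S_d), pr_{rs}^*\Delta_{S_d}$. For the integrality bound, note that any monomial $\prod_i pr_i^*(\xi^{a_i} c^{b_i})$ contributing nontrivially after restriction to $S_d^N$ must satisfy $a_i + 2b_i \leq 2$ for each $i$, since any class of codimension greater than $2$ on $S_d$ vanishes. Consequently, each factor $(\xi^{a_i} c^{b_i})|_{S_d}$ contributes at most $(4d-5)^{a_i+2b_i} \leq (4d-5)^2$ to the denominator, so the total denominator per monomial is bounded by $(4d-5)^{2N}$. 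The principal difficulty, granted Proposition \ref{leunivsurfd}, is precisely this denominator bookkeeping, which rests on matching the codimension bound $a_i + 2b_i \leq 2$ imposed by $\dim S_d = 2$ with the pole order at $4d = 5$ produced by the substitutions for $\xi|_{S_d}$ and $c|_{S_d}$.
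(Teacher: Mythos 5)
Your proof is correct and follows essentially the same route as the paper: apply Proposition \ref{leunivsurfd} to write $z_{\mathcal{S}_d}$ as a ${\rm CH}(G^N)$-combination of relative partial diagonals, restrict to a fiber, use ${\rm CH}(G^N)={\rm CH}(G)^{\otimes N}$ with the generators $c_1(\mathcal{O}_G(1))$ and $c_2$ of the (dual) tautological bundle, and convert these to $c_1(S_d),c_2(S_d)$ via adjunction ($K_{S_d}=\mathcal{O}_G(4d-5)|_{S_d}$), with the codimension bound in each factor giving the denominator $(4d-5)^{2N}$. Nothing essential differs from the paper's argument.
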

\begin{proof} As $z$ is universal, there exists a cycle $z_{\mathcal{S}_d}\in {\rm CH}(\mathcal{S}_d^{N/B})$ such that for any surface $S_d$ as above,
$$z_{S_d}=(z_{\mathcal{S}_d})_{\mid S_d^N},$$
where we see $S_d$ as a fiber of the universal family
$\mathcal{S}_d\rightarrow B$.
We next use Proposition \ref{leunivsurfd} to write, for $d>>0$, $\mathcal{Z}_d$ as a combination
$\sum_I f_N^*\alpha_I\cdot \Delta_{I/B}(\mathcal{S}_d)$, where $\alpha_I\in {\rm CH}(G(2,5))$.
Furthermore, is it immediate to prove that ${\rm CH}(G^N)={\rm CH}(G)^{\otimes N}$, so that
we can write each $\alpha_I$ as a polynomial with integral coefficients in
$pr_i^*m,\,m:=c_1(\mathcal{O}_G(1))=c_1(E)$ and $pr_j^*c,\,c:=c_2(E)$ where $E$ is the
dual of the tautological subbundle on $G$. Of course, under restriction to $S_d^N$, only polynomials of weighted degree $\leq 2$ in each set of
 variables $pr_i^*m,\,pr_i^*c$ will survive.
We now observe that the restriction of $m$ to $S_d$ is a rational multiple of $c_1(S)$ (more precisely,
$K_{S_d}=
\mathcal{O}_G(4d-5)_{\mid S}$ by the adjunction formula), and the restriction of $c$ to $S_d$ is an adequate
linear combination  of $\frac{1}{(4d-5)^2}c_1(S_d)^2,\,c_2(S_d)$.
Putting everything together and using the fact
that the relative diagonals $\Delta_{I/B}(\mathcal{S}_d)$ restrict to
$\Delta_I(S_d)$, we get the result.

\end{proof}
\begin{rema}\label{remaaddendum}{\rm Note that Corollary \ref{corpourthefinsd} is true
more generally
for the regular and complete intersection
 locus $S_{reg}$ of any set of $4$ degree $d$ equations on $G$.
 The proof  uses Proposition \ref{leunivsurfd} (which works for  the  family
 $\mathcal{S}_d\rightarrow B$ of smooth complete intersection quasi-projective surfaces),  and
  both conditions (i) and  (ii)  in Definition \ref{defiunivdef}.}
\end{rema}
The corollary above proves Theorem \ref{theopourtheofin} for smooth complete intersection surfaces
of degree $d$, and more generally
for the regular and complete intersection
 locus of any set of $4$ degree $d$ equations on $G$. What remains to be done is to prove that the polynomial above works for all
surfaces.
Note that the polynomial $P_d$ is in fact not uniquely defined
as only its value on the set of variables
$pr_i^*c_1(S_d),\,pr_j^*c_2(S_d),\,pr_{st}^*\Delta_{S_d}$ is well defined in
${\rm CH}(S_d^N)_\mathbb{Q}$. Hence {\it a priori} $P_d$ is only defined modulo the relations
in ${\rm CH}(S_d^N)_\mathbb{Q}$
between these variables. However, the following result shows that
a part of $P_d$ is in fact independent of $d$ for large $d$.

\begin{prop}\label{coroonnesaitjamais} For any universally
defined cycle $z$  on $N$-th  powers of surfaces, there exists a polynomial $Q$
in the variables $pr_{st}^*\Delta_S$, depending only on $z$,  with the following property:
For any smooth surface $S$, there is a Zariski dense open set
$V\subset S$ such that
$z_V=Q(pr_{st}^*\Delta_S)$ in ${\rm CH}(V^N)_\mathbb{Q}$.

\end{prop}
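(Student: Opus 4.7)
The plan is to obtain $Q$ from the polynomial $P_d$ of Corollary \ref{corpourthefinsd} by discarding all monomials that involve a Chern-class variable, and then to extend the resulting identity from opens of smooth complete intersection surfaces to opens of arbitrary smooth surfaces via a universality argument.

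First, fix $d$ sufficiently large so that Corollary \ref{corpourthefinsd} applies: there is a polynomial $P_d$ such that for every smooth complete intersection surface $S_d$ of degree $d$ in $G = G(2,5)$ one has $z_{S_d} = P_d(pr_i^* c_1(S_d), pr_j^* c_2(S_d), pr_{rs}^* \Delta_{S_d})$ in ${\rm CH}(S_d^N)_{\mathbb{Q}}$. Define $Q$ to be the polynomial in the diagonal variables alone obtained by collecting the monomials of $P_d$ containing no Chern-class factor; equivalently, $Q(\Delta) := P_d(0, 0, \Delta)$. I then verify the statement on opens of such complete intersections: for $d$ large, $K_{S_d} = \mathcal{O}_G(4d-5)|_{S_d}$ is very ample, so we may choose an effective divisor $D \in |K_{S_d}|$; let $U_d := S_d \setminus (\mathrm{supp}(D) \cup \mathrm{supp}(\xi))$, where $\xi$ is any $0$-cycle representing $c_2(T_{S_d})$. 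On $U_d$, both $c_1(T_{U_d})$ and $c_2(T_{U_d})$ vanish in ${\rm CH}(U_d)$, since they are restrictions of cycles supported on the complement. By condition (ii) of Definition \ref{defiunivdef}, $z_{U_d}$ is the restriction of $z_{S_d}$ to $U_d^N$, which is $P_d(0, 0, \Delta_{U_d}) = Q(\Delta_{U_d})$, as desired.

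For an arbitrary smooth quasi-projective surface $S$, the first step is to pass to a dense open $V \subset S$ on which $c_1(T_V) = 0$ in ${\rm CH}^1(V)$ and $c_2(T_V) = 0$ in ${\rm CH}^0(V)$: write $K_S = D_1 - D_2$ as a difference of effective divisors and remove $\mathrm{supp}(D_1) \cup \mathrm{supp}(D_2)$ together with the support of a $0$-cycle representative of $c_2(T_S)$. To then establish $z_V = Q(\Delta_V)$ on a further open, the strategy is to exhibit a smooth connected family $\mathcal{V} \to B$ of quasi-projective surfaces with trivial Chern classes, realising both a Zariski open of $V$ and a Zariski open of some $U_d$ as fibers; by condition (i), the universally defined cycle $z_{\mathcal{V}} - Q(\Delta_{\mathcal{V}/B})$ pulls back to $z_W - Q(\Delta_W)$ on each fiber $W$, and one propagates its vanishing from the $U_d$-fiber to the $V$-fiber.

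The main obstacle is precisely the construction of this connecting family and the propagation step: in general $V$ is not deformation-equivalent nor birational to any complete intersection in $G$, so one must argue less directly, for instance by placing $V$ and $U_d$ inside a larger auxiliary family of trivialized surfaces built using the moduli interpretation already implicit in the proof of Proposition \ref{leunivsurfd}, or by analyzing how a universally defined cycle on families of surfaces with prescribed vanishing of $c_1, c_2$ in Chow is forced to be a polynomial in the relative diagonals. This technical geometric heart of the argument is what the present sketch defers to the full treatment in \cite{voisinuniversalcycles}; the reduction above merely identifies the explicit polynomial $Q$ and isolates the geometric statement that needs to be proved.
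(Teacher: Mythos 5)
There is a genuine gap: the entire content of the proposition is the passage from complete intersections to arbitrary smooth surfaces, and this is precisely the step you leave unproved (and, misleadingly, defer to \cite{voisinuniversalcycles}, whereas the paper proves this proposition in full). Your identification of $Q$ as the diagonal-only part of $P_d$ and your verification on opens of smooth complete intersections are fine and essentially match the paper (the paper trivializes the Chern classes on $U_d$ by removing a hyperplane section of $G$ rather than supports of representatives of $K_{S_d}$ and $c_2$, but that is cosmetic). What you are missing is that no connecting family or deformation argument is needed. The key geometric input is a liaison-type observation: embed an arbitrary smooth surface $S$ in $G=G(2,5)$ by a very ample rank $2$ bundle; for $d$ large, four general degree $d$ hypersurfaces containing $S$ cut out a reduced complete intersection $\Sigma_d=S\cup T$, and the open set $S\setminus (S\cap T)$ lies in the \emph{smooth locus} of $\Sigma_d$. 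Since Corollary \ref{corpourthefinsd} applies to the regular complete-intersection locus of \emph{any} four degree $d$ equations (Remark \ref{remaaddendum}, which uses conditions (i) and (ii) of Definition \ref{defiunivdef} applied to the family $\mathcal{S}_d\rightarrow B$ of smooth loci), the identity $z=P_d(\ldots)$ holds on that open set of $S$, and after further shrinking to kill the Chern classes one gets $z_V=Q_d(pr_{st}^*\Delta_V)$. This replaces the "connecting family" you could not construct.

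The second point you overlook is the uniformity in $d$. You fix one large $d$ at the outset, but the $d$ needed for the liaison step depends on $S$ (on the embedding $S\subset G$), so a priori each surface only satisfies the identity for \emph{its own} $Q_d$, and $P_d$ — hence $Q_d$ — is only well defined modulo the relations among the variables that hold on the relevant $V^N$. The paper resolves this by comparing $Q_d$ and $Q_{d'}$ on $V_{d'}=U_{d'}\cap U'_{d'}$ for a degree $d'$ complete intersection viewed inside a degree $d$ one: the difference $Q_d-Q_{d'}$ lies in $\mathrm{Ker}\,{\rm ev}_{d'}$, these kernels are nested ($\mathrm{Ker}\,{\rm ev}_d\subset \mathrm{Ker}\,{\rm ev}_{d'}$ for $d'\leq d$) and sit in a finite-dimensional space of homogeneous polynomials, hence are stationary for $d\geq d_0$; one then takes $Q=Q_{d_0}$. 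Without both of these ingredients your argument establishes the statement only for opens of smooth complete intersections in $G$, not for arbitrary smooth surfaces.
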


\begin{proof} Let $Q_d$ be the part of the polynomial $P_d$ which involves only the diagonals.
Then let $U_d\subset S_d$ be the complement of a hyperplane section
defined by the choice of a codimension $2$ vector subspace $W\subset \mathbb{C}^5$ in general position. As
$c_1(\mathcal{O}_{S_d}(1))$ and $c_2(E)$ vanish in ${\rm CH}(U_d)$, we deduce
from (\ref{eqpourPd}) that
\begin{eqnarray}\label{eqncyrestUd}
z_{U_d}=Q_d(pr_{st}^*\Delta_{U_d})\,\,{\rm in}\,\,{\rm CH}(U_d^N).
\end{eqnarray}
We observe now that
for $d'\leq d$, a surface $S_{d'}$ which is the complete intersection in $G$
of hypersurfaces of degree $d'$ is an irreducible component
of a (singular) surface $\Sigma_d=S_{d'}\cup T$ defined as the complete
intersection in $G$ of four degree $d$ hypersurfaces
containing $S_{d'}$ and that, denoting
 $C:=S_{d'}\cap T$, the open set  $U'_{d'}:=S_{d'}\setminus C$ is contained in the smooth locus
 of $\Sigma_d$. From Remark \ref{remaaddendum},
 we thus get that
 \begin{eqnarray}\label{eqncyrestUdsurU'}
z_{U'_{d'}}=P_d(pr_i^*c_1(U'_{d'}),pr_j^*c_2(U'_{d'}),pr_{st}^*\Delta_{U'_{d'}})\,\,{\rm in}\,\,{\rm CH}({U'_{d'}}^N)_\mathbb{Q},
\end{eqnarray}
and after restriction
to $V_{d'}:=U_{d'}\cap U'_{d'}$, this becomes
\begin{eqnarray}\label{eqncyrestUdsurU'surV}
z_{V_{d'}}=Q_d(pr_{st}^*\Delta_{V_{d'}})\,\,{\rm in}\,\,{\rm CH}({V}_{d'}^N)_\mathbb{Q},
\end{eqnarray}
On the other hand, we also have (\ref{eqncyrestUd}) for $d'$, which provides after restriction
to $V_{d'}$

\begin{eqnarray}\label{eqncyrestUdsurd'}
z_{V_{d'}}=Q_{d'}(pr_{st}^*\Delta_{V_{d'}})\,\,{\rm in}\,\,{\rm CH}({V}_{d'}^N)_\mathbb{Q}.
\end{eqnarray}
Hence $Q_d-Q_{d'}$ belongs to the kernel
of the map
$${\rm ev}_{d'}:\mathbb{Q}[X_{rs}]_{1\leq r\not=s\leq N}\rightarrow {\rm CH}(V^N),$$
$$f\mapsto f(p_{rs}^*\Delta_V),$$
where $V$ is a sufficiently small Zariski  open set of a general
complete intersection of four hypersurfaces of degree $d'$ in $G$.
On the other hand, it follows from the above construction
that ${\rm Ker}\,{\rm ev}_d\subset {\rm Ker}\,{\rm ev}_{d'}$ for $d'\leq d$. As the polynomials we consider are homogeneous of given degree (equal to half the codimension of $z$),
they live in a finite dimensional vector space and
we conclude that these kernels are in fact stationary, equal to $K$ for $d\geq d_0$. So we
finally conclude that there exists a $d_0$ such that
for $d\geq d'\geq d_0$,
$Q_d-Q_{d'}$ belongs to $K$. It follows that for any $d$, for any reduced
complete intersection
of four degree $d$ hypersurfaces in $G$, and for a dense Zariski open set $V\subset S$
$$z_{V}=Q_{d_0}(pr_{st}^*\Delta_{V}
)\,\,{\rm in}\,\,{\rm CH}(V^N)_\mathbb{Q}.$$
As any smooth quasi-projective surface has a dense  Zariski open set which
is contained in the smooth locus of such a complete intersection for $d$ large enough,
the proposition is proved, with $Q=Q_{d_0}$.

\end{proof}
We finish this section with the proof of  Theorem \ref{theopourtheofin} in the case $N=1$.
\begin{prop}\label{projusteS} Let $z$ be
a universally defined cycle on surfaces.
Then there is a polynomial $P$  independent of $S$ and with integral coefficients, such that
for any smooth quasi-projective surface $S$,
$$z_S=P(c_2(S),\,c_1(S))\in {\rm CH}(S)_\mathbb{Q}.$$

\end{prop}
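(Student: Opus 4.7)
The plan is to use Corollary \ref{corpourthefinsd} applied with $N=1$ to produce a polynomial on smooth complete intersection surfaces in $G=G(2,5)$, stabilize it in the degree $d$, and then extend to arbitrary smooth quasi-projective surfaces via a dense Zariski open of complete-intersection type. First, note that $z_S$ has a fixed codimension $k$; for $k\geq 3$ one has ${\rm CH}^k(S)=0$ and the statement holds trivially with $P=0$, so I restrict to $k\in\{0,1,2\}$. The sought polynomial $P$ then lies in the finite-dimensional space $\mathbb{Q}[X_1,X_2]_k$ of polynomials of weighted degree $k$, with $X_1,X_2$ of weights $1,2$ (so of dimension $1$ for $k=0,1$ and dimension $2$ for $k=2$).

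Applying Corollary \ref{corpourthefinsd} with $N=1$: since no off-diagonal projections $pr_{rs}$ exist for $r\neq s$ in $\{1\}$, no diagonal cycles appear in the formula, and for each sufficiently large $d$ one obtains $P_d\in\mathbb{Q}[X_1,X_2]_k$ such that $z_{S_d}=P_d(c_1(S_d),c_2(S_d))$ for every smooth complete intersection surface $S_d$ of four degree-$d$ forms in $G$, and by Remark \ref{remaaddendum} the same formula holds on the smooth locus of any reduced (possibly singular) such complete intersection. I then stabilize the $P_d$'s following the argument of Proposition \ref{coroonnesaitjamais}: given $d'\leq d$, a smooth $S_{d'}$ is an irreducible component of a degree-$d$ complete intersection $\Sigma_d=S_{d'}\cup T$, and on the dense open $U'_{d'}=S_{d'}\setminus(S_{d'}\cap T)$ (lying in the smooth locus of $\Sigma_d$) both $P_d$ and $P_{d'}$ compute $z_{U'_{d'}}$. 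Hence $P_d-P_{d'}$ lies in the kernel of the evaluation map ${\rm ev}_{d'}:\mathbb{Q}[X_1,X_2]_k\to{\rm CH}(U'_{d'})_\mathbb{Q}$; these nested kernels stabilize in the finite-dimensional space $\mathbb{Q}[X_1,X_2]_k$ to some $K$ for indices $\geq d_0$, yielding a single polynomial $P$ (unique modulo $K$) with $z_{S_d}=P(c_1(S_d),c_2(S_d))$ for all smooth complete intersection surfaces of degree $d\geq d_0$ in $G$. For an arbitrary smooth quasi-projective $S$, I observe that $S$ contains a dense Zariski open $V$ isomorphic to an open subset of the smooth locus of some complete intersection of four large-degree hypersurfaces in $G$, obtained from five sufficiently general sections of a very ample rank-two vector bundle on $S$; universality axiom (ii) then gives $z_V=P(c_1(V),c_2(V))$ in ${\rm CH}(V)_\mathbb{Q}$.

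The main obstacle is promoting this equality from $V$ to $S$. The difference $w_S:=z_S-P(c_1(S),c_2(S))$ is itself a universally defined cycle of codimension $k$ vanishing on $V$; by the localization sequence it is supported on $S\setminus V$, a closed subscheme of dimension $\leq 1$. For $k=0$ the restriction ${\rm CH}^0(S)_\mathbb{Q}\to{\rm CH}^0(V)_\mathbb{Q}$ is injective, so $w_S=0$ immediately. For $k=1,2$, the idea is to vary the auxiliary data (the five sections) producing the morphism $S\to G$: over a smooth parameter space $\mathcal{B}$ of such choices one obtains a universally defined cycle $w_\mathcal{S}$ on the relative total space $\mathcal{S}\to\mathcal{B}$ that vanishes on the dense Zariski open where the various $V$-loci live. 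A spreading/generic-fiber argument, combined with the fact that for generic choices the $V$-loci cover $S$ up to subsets of codimension $\geq k+1$, should force $w_\mathcal{S}$ to vanish in a generic fiber, hence $w_S=0$ for every $S$. Integrality of the coefficients of $P$ follows from the integrality of $z_S$ together with a careful tracking of the $(4d-5)^{2N}$-denominators from Corollary \ref{corpourthefinsd}. The complete execution of this last step is the main technical point and is carried out in \cite{voisinuniversalcycles}.
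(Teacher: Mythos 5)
Your reduction to codimension $k\le 2$, the use of Corollary \ref{corpourthefinsd} with $N=1$, the stabilization of the polynomials $P_d$ in $d$, and the restriction to a dense open $V\subset S$ of complete-intersection type all match the paper's strategy. The gap is in the step you yourself flag as the main obstacle: promoting $z_V=P(c_1,c_2)$ from $V$ to $S$. Your proposed mechanism --- that for generic auxiliary choices the opens $V$ cover $S$ up to codimension $\ge k+1$, so a spreading argument forces the difference $w_S$ to vanish --- does not work, because a class in ${\rm CH}^k(S)$ supported on a curve is not constrained by where that curve sits. This is fatal for $k=2$: on a surface with ${\rm CH}_0(S)=\mathbb{Z}$, \emph{every} $0$-cycle class is supported on \emph{every} curve $C_b$, so varying the complement gives no information at all, and the correction term could a priori be any multiple of the generator. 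Even for $k=1$ more is needed: the difference is a priori a multiple of the class $(4d-5)L-K_S$ of the curve $C$, and one must exploit that the left-hand side is independent of $L$ (plus a blow-up trick when $K_S$ and $L$ are proportional in ${\rm CH}^1(S)_\mathbb{Q}$) to kill it.

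The paper's proof supplies exactly the missing ingredients. It computes the correction term explicitly: via Lemma \ref{propCHCuniv} (surjectivity of ${\rm CH}^1(\mathbb{P}(N_{S/G}(-d)))\rightarrow{\rm CH}^1(\mathcal{C})$ for the universal curve over $B_S$) the difference equals $-\alpha\, s_2(N_{S/G}(-d))-((4d-5)L-K_S)\cdot w_S$, with $w_S=\gamma c_1(S)+\delta L$ itself universally defined. When no nonzero combination of $s_2(N_{S/G}(-d)),\,L^2,\,K_S\cdot L$ lies in the span of $z_S,\,c_2(S),\,c_1(S)^2$, this forces $\alpha=\gamma=\delta=0$; the case ${\rm CH}_0(S)\neq\mathbb{Z}$ is reduced to this by blowing up six general points and restricting back. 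For the remaining case ${\rm CH}_0(S)=\mathbb{Z}$ the paper uses an integrality argument you do not have: the identity holds with integral coefficients on the universal family (since $(4d-5)^2\mu_d$, $(4d-5)^2\nu_d$ are integers and ${\rm CH}^2(\mathcal{S}_d)$ is torsion-free), and by choosing $d$, $c_2(E)$ and $L$ so that an arbitrarily large integer $M$ divides both $s_2(N_{S/G}(-d))$ and $(4d-5)L-K_S$, one concludes that $M$ divides $z_S-\mu c_2(S)-\nu c_1(S)^2$ in ${\rm CH}_0(S)=\mathbb{Z}$, hence the difference vanishes. Note also that the paper proves Proposition \ref{projusteS} (the case $N=1$) completely; only the general-$N$ Theorem \ref{theopourtheofin} is deferred to \cite{voisinuniversalcycles}, so deferring your key step to that reference does not close the gap here.
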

\begin{proof}
Let us first treat the case of $z\in {\rm CH}^1(S)$ universally defined.
For complete intersections $S_d$  of four hypersurfaces of degree
$d$  in $G$, we must have by Corollary \ref{corpourthefinsd}
$$z=\alpha_d K_{S_d},$$
for some rational number $\alpha_d$ such that $(4d-5)\alpha_d\in\mathbb{Z}$,
and for any surface $S$, choosing a very  ample vector bundle
$E$ of rank $2$ on $S$ to embed $S$ in $G$, and choosing $d$ large enough, we get
\begin{eqnarray}
\label{eqfin4nov}z_{S\mid U}=\alpha_dK_U\,\,{\rm in}\,\,{\rm CH}^1(U)_\mathbb{Q},
\end{eqnarray}
where $U=S\setminus C$, the surface
$S\cup_CT=\Sigma_d$ being the complete intersection of four degree $d$ hypersurfaces
containing $S$ in $G$.
The curve $C$ belongs to the linear system
$|(4d-5)L-K_S|$, where $L={\rm det}\,E=\mathcal{O}_G(1)_{\mid S}$. For a general choice of
equations and $d$ large enough,
the curve $C$ will be irreducible, so by the localization exact sequence,
(\ref{eqfin4nov}) rewrites as
\begin{eqnarray}
\label{eqfin4nov}z_{S}=\alpha_dK_S+\beta_d C\,\,{\rm in}\,\,{\rm CH}^1(S)_\mathbb{Q},
\end{eqnarray}
If $K_S$ and $L$ are linearly independent in ${\rm CH}^1(S)_\mathbb{Q}$, this implies, because
the left hand side is independent of $L$,  that $\beta_d=0$ and thus
$z_S=\alpha_d K_S$, with $\alpha_d=:\alpha$ necessarily independent of $d$.
If not, we simply blow up $S$ at one point and choose $L$ on
$\widetilde{S}$ linearly independent of $K_{\widetilde{S}}$ in ${\rm CH}^1(\widetilde{S})_\mathbb{Q}$.
Then the above conclusion applies to $\widetilde{S}$, hence we get
\begin{eqnarray}
\label{eqfin4novtructruc}z_{\widetilde{S}}=\alpha K_{\widetilde{S}}\,\,{\rm in}\,\,{\rm CH}^1(\widetilde{S}).
\end{eqnarray}
 As
$$S\setminus\{p\}\cong \widetilde{S}\setminus E_p,\,\,{\rm CH}^1(S)_\mathbb{Q}\cong
{\rm CH}^1(S\setminus\{p\})_\mathbb{Q},$$
(\ref{eqfin4novtructruc})  is also true for $S$ by condition (ii) in Definition
\ref{defiunivdef}. Finally $\alpha$ has to be an integer since $(4d-5)\alpha\in \mathbb{Z}$
for any $d$.
This proves the result for $z_S\in {\rm CH}^1(S)$ universally defined.

Let us now prove the result for a  universally defined cycle $z_S\in {\rm CH}^2(S)$.
We start the proof exactly as before, and we have by Corollary \ref{corpourthefinsd} and Remark
\ref{remaaddendum} that
for $S$ the regular locus of  a complete intersection of four hypersurfaces of degree $d$ in $G$,
$z_S=\mu_d c_2(S)+\nu_d c_1(S)^2$ in ${\rm CH}_0(S)_\mathbb{Q}$.
With the notation above, we conclude as before that for any pair $(S,E) $ consisting of a
smooth surface $S$ and a very ample rank $2$ vector bundle in a bounded family
(depending on $d$), we have
\begin{eqnarray}
\label{equtilepourtheofin66}
z_{S\mid U}=\mu_d c_2(S)_{\mid U}+\nu_dc_1(S)^2_{\mid U}\,\,{\rm in}\,\,{\rm CH}_0(U)_\mathbb{Q},
\end{eqnarray}
where $U=S\setminus C$ and the curve $C$ is defined as above.
By the localization exact sequence, this is equivalent
to
\begin{eqnarray}
\label{equtilepourtheofin77}
z_{S}=\mu_d c_2(S)+\nu_dc_1(S)^2+z'_{S}\,\,{\rm in}\,\,{\rm CH}_0(S)_\mathbb{Q},
\end{eqnarray}
where $z'_S$ is a $0$-cycle supported on $C$.
 If  $d$ is large enough (with respect to the given
bounded family), the surface $S$ is schematically
 cut out by hypersurfaces of degree $d$ in $G$ and this implies
  that the morphism of sheaves
$$I_S(d)\otimes\mathcal{O}_S\rightarrow N_{S/G}^*(d),\,\,N_{S/G}^*=\mathcal{I}_S/\mathcal{I}_S^2,$$
is surjective. The curve $C$ is the curve obtained as the degeneracy locus of
the morphism
$$f:\mathcal{O}_S^4\rightarrow N_{S/G}^*(d),$$
deduced from the choice of $4$ degree $d$ equations defining $S$ in $G$.
It is in fact better to see such a curve as
embedded in
$\mathbb{P}(N_{S/G}(-d))$, as the complete intersection of $4$ hypersurfaces
in the linear system $|\mathcal{O}_{\mathbb{P}(N_{S/G}(-d))
}(1)|$.
Let $B_S$ be the Zariski open set of $I_S(d)^4$ consisting of the
$(f_1,\ldots, f_4)\in I_S(d)^4$ such that
the $f_i$'s form  a regular sequence  cutting $S$ at its generic point.
For each element $b\in B_S$, let $C_b\subset \mathbb{P}(N_{S/G}(-d))$ be the corresponding curve.
We have
a family  of complete intersection surfaces
$$\mathcal{S}'\rightarrow B_S $$
which is a subfamily of the family
$\mathcal{S}'_d\rightarrow B$ of all complete intersection surfaces of four degree $d$
hypersurfaces in $G$.
The family
$\mathcal{S}'\rightarrow B_S $ contains $B_S\times S$, which itself
contains the family of curves
$\mathcal{C}\rightarrow B_S$ with fiber $C_b$ over $b\in B_S$. The surfaces
$b\times (S\setminus C_b)$ are contained the smooth locus of
the surfaces $S'_b$.
In conclusion, the inclusion
$$\mathcal{S}'\subset \mathcal{S}'_d$$
of families of complete intersection surfaces restricts, by considering the Zariski
open sets where the surfaces are smooth, to
 an open inclusion of smooth families of surfaces
$$(B_S\times S)\setminus \mathcal{C}\hookrightarrow \mathcal{S}_d\times_BB_S.$$
Applying the conditions (i) and (ii)
in Definition \ref{defiunivdef} and Proposition \ref{leunivsurfd}, we get
that
$$(pr_S^*(\mu_d c_2(S)+\nu_d c_1(S)^2))_{\mid (B_S\times S)\setminus \mathcal{C}}=(z_{B_S\times S})_{\mid (B_S\times S)\setminus \mathcal{C}}\,\,{\rm in}\,\,{\rm CH}((B_S\times S)\setminus \mathcal{C})_\mathbb{Q},$$
which rewrites equivalently, by the localization exact sequence,
as
\begin{eqnarray} \label{equtiledu24oct} (pr_S^*(\mu_d c_2(S)+\nu_d c_1(S)^2))-z_{B_S\times S}=i_*(w)\,\,{\rm in}\,\,{\rm CH}(B_S\times S)_\mathbb{Q},
\end{eqnarray}
where $w\in {\rm CH}^1(\mathcal{C})_\mathbb{Q}$ and $i:\mathcal{C}\rightarrow B_S\times S$ is the natural morphism. Here, as already mentioned, it is much better to consider the variant of
$\mathcal{C}$ which is contained in $B_S\times \mathbb{P}(N_{S/G}(-d))$, (and the
morphism $i$ is only at the general point of
$B_S$ an embedding) because it is then the universal complete intersection
of four hypersurfaces and this allows to conclude with exactly the same proof as in Proposition
\ref{leunivsurfd}:
\begin{lemm}\label{propCHCuniv} The restriction map ${\rm CH}^1(\mathbb{P}(N_{S/G}(-d)))
\rightarrow {\rm CH}^1(\mathcal{C})$ is surjective.
\end{lemm}

Note that $${\rm CH}^1(\mathbb{P}(N_{S/G}(-d)))={\rm CH}^1(S)\oplus \mathbb{Z}c_1(\mathcal{O}_{\mathbb{P}(N_{S/G}(-d))}(1)).$$
The cycle $w$ of (\ref{equtiledu24oct}) thus decomposes as
$w=(w_S)_{\mid C}+(w_P)_{\mid C}$, with
$$w_P=\alpha c_1(\mathcal{O}_{\mathbb{P}(N_{S/G}(-d))}(1))\in {\rm CH}^1(\mathbb{P}(N_{S/G}(-d)))_\mathbb{Q}.$$
 As
 $C_b\subset \mathbb{P}(N_{S/G}(-d))$ is the intersection of four
 members  of the linear system $|\mathcal{O}_{\mathbb{P}(N_{S/G}(-d))}(1)) |$ for any given $b\in B_S$, we get
\begin{eqnarray}
\label{eqnou24oct}
i_*((w_P)_{\mid C_b})=\alpha s_2(N_{S/G}(-d)),
\end{eqnarray}
where $s_2$ is the second Segre class.
Note  that by (\ref{eqs2}), $s_2(N_{S/G}(-d))
$ can be explicitly computed as
a linear combination with integral coefficients of  $L^2,\,c_2(E), c_2(S),\,c_1(S)^2$, involving non trivially
$c_2(E)$.
Thus we get from (\ref{equtiledu24oct}) and (\ref{eqnou24oct}), by restricting to any point
$b\in B_S$
\begin{eqnarray}\label{eqder5no} z_S=\mu_d c_2(S)+\nu_d c_1(S)^2)-\alpha s_2(N_{S/G}(-d))-((4d-5)L-K_S)\cdot w_S\,\, {\rm in}\,\,{\rm CH}_0(S)_\mathbb{Q}.
\end{eqnarray}

Let us  analyze in general the cycle $w_S\in {\rm CH}^1(S)$.
We observe first that this cycle is universally defined for triples
$(S,E,d)$, where $d$ has to be large and we conclude as in the previous
proof that there are rational numbers $\gamma,\,\delta$
such that
$$w_S=\gamma c_1(S)+\delta L,$$
where $L=c_1(E)$.
Combining this with
(\ref{eqder5no}), we get, using the fact that the
curve $i(C_b)\subset S$ belongs to the linear system  $|(4d-5)L-K_S|$:

\begin{eqnarray} \label{equtiledu24oct11} z_{S}=\mu_d c_2(S)+\nu_dc_1(S)^2
-\alpha s_2(N_{S/G}(-d))
\\
\nonumber
-((4d-5)L-K_S)\cdot (\gamma K_S+\delta L)\,\,{\rm in}\,\,{\rm CH}_0( S)_\mathbb{Q},
\end{eqnarray}
where the coefficients $\alpha,\,\beta,\,\gamma$ a priori depend on $S,\,E$ and $d$.
Finally we observe that the left hand side depends only on $S$, while in the right hand side,
for fixed large $d$, we can freely change $c_2(E),\,L=c_1(E)$ (staying in a bounded family).
Assume first that no nonzero linear combination of the cycles  $s_2(N_{S/G}(-d)),\,L^2, \, K_S\cdot L$  belongs to
the linear span of the cycles
$z_{S},\,c_2(S),\,c_1(S)^2$. Then (\ref{equtiledu24oct11}) easily implies that
$\alpha=\delta=\gamma=0$, so that we get in this case
\begin{eqnarray} \label{equtiledu24oct222} z_{S}=\mu_d c_2(S)+\nu_dc_1(S)^2
\,\,{\rm in}\,\,{\rm CH}_0( S)_\mathbb{Q}.
\end{eqnarray}
To treat the general case, we will first prove the result assuming
${\rm CH}_0(S)$  is nontrivial, that is, if $S$ is projective
and connected, ${\rm CH}_0(S)\not=\mathbb{Z}$. It then easily follows that ${\rm CH}_0(S)$ is uncountable,
and one deduces immediately that for the blow-up $S'$ of $S$ at $6$ general points, and for adequate
choice of $E'$, (and thus $L'={\rm det}\,E'$)
no combination of the cycles  $s_2(N_{S'/G}(-d)),\,L'^2, \, K_{S'}\cdot L'$  belongs to
the linear span of the cycles
$z_{S'},\,c_2(S'),\,c_1(S')^2$. Thus we conclude from (\ref{equtiledu24oct222}) that
for a general blow-up $S'$ of $S$ along $6$ points,
\begin{eqnarray} \label{equtiledu24oct333}z_{S'}=\mu_d c_2(S')+\nu_dc_1(S')^2
\,\,{\rm in}\,\,{\rm CH}_0( S')_\mathbb{Q}.
\end{eqnarray}
But this immediately implies
that for the original surface $S$, (\ref{equtiledu24oct222}) also holds, because
we can restrict (\ref{equtiledu24oct333}) to
the Zariski open set $S'\setminus \cup_{i=1}^6E_i=S\setminus \{p_1,\ldots,p_6\}$,
where the $p_i$'s are the points blown-up in $S$ and the $E_i$'s are
the exceptional curves over them.
Using property (ii) in Definition \ref{defiunivdef}, we find that
(\ref{equtiledu24oct333}) provides:
\begin{eqnarray} \label{equtiledu24oct333isb}(z_{S})_{\mid S\setminus \{p_1,\ldots,p_6\}}=\mu_d c_2(S)+\nu_dc_1(S)^2
\,\,{\rm in}\,\,{\rm CH}_0( S\setminus \{p_1,\ldots,p_6\})_\mathbb{Q}.
\end{eqnarray}
As the $p_i$'s are general points, this immediately implies that
$$z_{S}=\mu_d c_2(S)+\nu_dc_1(S)^2
\,\,{\rm in}\,\,{\rm CH}_0( S)_\mathbb{Q}.$$
Finally, the above discussion was concerning polarized surfaces in a bounded family
(depending on $d$). Taking $d$ larger, we conclude that the coefficients
$\mu_d$ and $\nu_d$ do not depend on $d$, which concludes the proof, under the assumption made
on ${\rm CH}_0(S)$.

It now only remains to prove the result for surfaces with ${\rm CH}_0(S)=\mathbb{Z}$.
In this case, we can use the following trick, using the fact that we already know that
$$\mu_d=\mu,\,\,\nu_d=\nu$$
are independent of $d$ (supposed to be large), so that
 $$z_{S}=\mu c_2(S)+\nu c_1(S)^2$$
 when $S$ is the regular locus of the complete intersection
of four hypersurfaces of degree $d$ in $G$. Here $\mu,\,\nu$ are {\it a priori} rational numbers
but in fact, they are integers because $(4d-5)^5\mu_d,\,(4d-5)^2\nu_d$ are
integers by Corollary \ref{corpourthefinsd}.
We observe now that with the  notation introduced for  Proposition
\ref{leunivsurfd},
the equality
$$z_{\mathcal{S}_d}=\mu c_2(K_{\mathcal{S}_d/B})+\nu c_1(K_{\mathcal{S}_d/B})^2$$
must hold in ${\rm CH}^2(\mathcal{S}_d)$, because both sides are integral cycles,
the equality is satisfied in ${\rm CH}^2(\mathcal{S}_d)_\mathbb{Q}$, and
 ${\rm CH}^2(\mathcal{S}_d)$ has no torsion.
 We choose as before a very ample rank $2$ vector bundle $E$ on $S$ with $c_2(E)=c\in {\rm CH}_0(S)=\mathbb{Z}$, and embed
$S$ in $G$ using $5$ general sections of $E$. For large $d$, we then
have as before
the family $(B_S\times S)\setminus \mathcal{C}\subset \mathcal{S}_d$, where
$B_S$ is a Zariski open set of $I_S(d)^4$, and
we conclude by restriction, using (i) and (ii) of Definition
\ref{defiunivdef} that
$$(pr_S^*z_S)_{\mid (B_S\times S)\setminus \mathcal{C}}=(\mu pr_S^*c_2(K_{S})+\nu pr_S^*c_1(K_S)^2)_{\mid (B_S\times S)\setminus \mathcal{C}}$$
holds in  ${\rm CH}^2((B_S\times S)\setminus \mathcal{C})$.
In other words, we proved the same equalities as before, but in ${\rm CH}$ instead of
${\rm CH}_\mathbb{Q}$.
We now apply the localization exact sequence and Lemma \ref{propCHCuniv} to conclude
that (\ref{eqder5no})
holds in fact with integral cycles, namely
\begin{eqnarray}\label{eqder5nopasder} z_S=\mu c_2(S)+\nu c_1(S)^2-\alpha s_2(N_{S/G}(-d))-((4d-5)L-K_S)\cdot w_S\,\, {\rm in}\,\,{\rm CH}_0(S),
\end{eqnarray}
where $\alpha$ is an integer and $w_S\in {\rm CH}^1(S)$.

We now choose $d$, $c$ and $L={\rm det}\,E$
in such a way that an arbitrarily large given integer $M$ divides
  both $s_2(N_{S/G}(-d))$ and $(4d-5)L-K_S\in {\rm NS}(S)$. This is possible because
  \begin{eqnarray}
  \label{eqs2}s_2(N_{S/G}(-d))=c+Q(L,K_S),
  \end{eqnarray}
  where $Q$ is a degree $2$ polynomial with integer coefficients.
  Then  we apply
formula (\ref{eqder5nopasder}), and we
 conclude that
$M$ divides $z_S-\mu c_2(S)-\nu c_1(S)^2 $ in ${\rm CH}_0(S)=\mathbb{Z}$. As
$M$ is arbitrarily large, it follows that
$z_S=\mu c_2(S)+\nu c_1(S)^2$.
\end{proof}

 Institut de Math\'ematiques de Jussieu
 
 4 place Jussieu

Case 247

75252 Paris Cedex 05

 France

\smallskip
 claire.voisin@imj-prg.fr
    \end{document}